\newtheorem{thm}{Theorem} [section]
\newtheorem{lemma}[thm]{Lemma}
\newtheorem{corollary}[thm]{Corollary}
\newtheorem{prop}[thm]{Proposition}
\newtheorem*{basic assumption}{Basic Assumption}
\theoremstyle{definition}
\newtheorem*{principal example}{Main Example}
\newtheorem{defn}[thm]{Definition}
\newtheorem{example}[thm]{Example}
\newtheorem{construction}[thm]{Construction}
\newtheorem{assumptions}[thm]{Assumptions}
\newtheorem{assumption}[thm]{Assumption}
\theoremstyle{remark}
\newtheorem{remark}[thm]{Remark}
\newtheorem{claim}[thm]{Claim}
\begin{document}

\numberwithin{equation}{section}

\newcommand{\hs}{\mbox{\hspace{.4em}}}
\newcommand{\ds}{\displaystyle}
\newcommand{\bd}{\begin{displaymath}}
\newcommand{\ed}{\end{displaymath}}
\newcommand{\bcd}{\begin{CD}}
\newcommand{\ecd}{\end{CD}}

\newcommand{\proj}{\operatorname{Proj}}
\newcommand{\bproj}{\underline{\operatorname{Proj}}}
\newcommand{\spec}{\operatorname{Spec}}
\newcommand{\bspec}{\underline{\operatorname{Spec}}}
\newcommand{\pline}{{\mathbf P} ^1}
\newcommand{\pplane}{{\mathbf P}^2}
\newcommand{\coker}{{\operatorname{coker}}}
\newcommand{\ldb}{[[}
\newcommand{\rdb}{]]}

\newcommand{\Sym}{\operatorname{Sym}^{\bullet}}
\newcommand{\Symp}{\operatorname{Sym}}
\newcommand{\Pic}{\operatorname{Pic}}
\newcommand{\AAut}{\operatorname{Aut}}
\newcommand{\PAut}{\operatorname{PAut}}

\newcommand{\too}{\twoheadrightarrow}
\newcommand{\C}{{\mathbf C}}
\newcommand{\cA}{{\mathcal A}}
\newcommand{\cS}{{\mathcal S}}
\newcommand{\cV}{{\mathcal V}}
\newcommand{\cM}{{\mathcal M}}
\newcommand{\bA}{{\mathbf A}}
\newcommand{\cB}{{\mathcal B}}
\newcommand{\cC}{{\mathcal C}}
\newcommand{\cD}{{\mathcal D}}
\newcommand{\D}{{\mathcal D}}
\newcommand{\cs}{{\mathbf C} ^*}
\newcommand{\boldc}{{\mathbf C}}
\newcommand{\cE}{{\mathcal E}}
\newcommand{\cF}{{\mathcal F}}
\newcommand{\cG}{{\mathcal G}}
\newcommand{\G}{{\mathbf G}}
\newcommand{\fg}{{\mathfrak g}}
\newcommand{\bH}{{\mathbf H}}
\newcommand{\cH}{{\mathcal H}}
\newcommand{\cI}{{\mathcal I}}
\newcommand{\cJ}{{\mathcal J}}
\newcommand{\cK}{{\mathcal K}}
\newcommand{\cL}{{\mathcal L}}
\newcommand{\baL}{{\overline{\mathcal L}}}
\newcommand{\M}{{\mathcal M}}
\newcommand{\bM}{{\mathbf M}}
\newcommand{\bm}{{\mathbf m}}
\newcommand{\cN}{{\mathcal N}}
\newcommand{\theo}{\mathcal{O}}
\newcommand{\cP}{{\mathcal P}}
\newcommand{\cR}{{\mathcal R}}
\newcommand{\boldp}{{\mathbf P}}
\newcommand{\boldq}{{\mathbf Q}}
\newcommand{\bbL}{{\mathbf L}}
\newcommand{\cQ}{{\mathcal Q}}
\newcommand{\cO}{{\mathcal O}}
\newcommand{\Oo}{{\mathcal O}}
\newcommand{\OX}{{\Oo_X}}
\newcommand{\OY}{{\Oo_Y}}
\newcommand{\otY}{{\underset{\OY}{\ot}}}
\newcommand{\otX}{{\underset{\OX}{\ot}}}
\newcommand{\cU}{{\mathcal U}}
\newcommand{\cX}{{\mathcal X}}
\newcommand{\cW}{{\mathcal W}}
\newcommand{\boldz}{{\mathbf Z}}
\newcommand{\cZ}{{\mathcal Z}}
\newcommand{\qgr}{\operatorname{qgr}}
\newcommand{\gr}{\operatorname{gr}}
\newcommand{\coh}{\operatorname{coh}}
\newcommand{\End}{\operatorname{End}}
\newcommand{\Hom}{\operatorname{Hom}}
\newcommand{\uHom}{\underline{\operatorname{Hom}}}
\newcommand{\uHomY}{\uHom_{\OY}}
\newcommand{\uHomX}{\uHom_{\OX}}
\newcommand{\Ext}{\operatorname{Ext}}
\newcommand{\bExt}{\operatorname{\bf{Ext}}}
\newcommand{\Tor}{\operatorname{Tor}}

\newcommand{\inv}{^{-1}}
\newcommand{\airtilde}{\widetilde{\hspace{.5em}}}
\newcommand{\airhat}{\widehat{\hspace{.5em}}}
\newcommand{\nt}{^{\circ}}
\newcommand{\del}{\partial}

\newcommand{\supp}{\operatorname{supp}}
\newcommand{\GK}{\operatorname{GK-dim}}
\newcommand{\hd}{\operatorname{hd}}
\newcommand{\id}{\operatorname{id}}
\newcommand{\res}{\operatorname{res}}
\newcommand{\lrar}{\leadsto}
\newcommand{\im}{\operatorname{Im}}
\newcommand{\HH}{\operatorname{H}}
\newcommand{\TF}{\operatorname{TF}}
\newcommand{\Bun}{\operatorname{Bun}}
\newcommand{\Hilb}{\operatorname{Hilb}}
\newcommand{\Fact}{\operatorname{Fact}}
\newcommand{\F}{\mathcal{F}}
\newcommand{\nthord}{^{(n)}}
\newcommand{\Aut}{\underline{\operatorname{Aut}}}
\newcommand{\Gr}{\operatorname{\bf Gr}}
\newcommand{\Fr}{\operatorname{Fr}}
\newcommand{\GL}{\operatorname{GL}}
\newcommand{\gl}{\mathfrak{gl}}
\newcommand{\SL}{\operatorname{SL}}
\newcommand{\ff}{\footnote}
\newcommand{\ot}{\otimes}
\newcommand{\Wx}{\mathcal W_{\mathfrak X}}
\newcommand{\gh}{\text{gr}_{\hbar}}
\newcommand{\ig}{\iota_g}
\def\Ext{\operatorname {Ext}}
\def\Hom{\operatorname {Hom}}
\def\Ind{\operatorname {Ind}}
\def\bbZ{{\mathbb Z}}

\newcommand{\nc}{\newcommand}
\newcommand{\on}{\operatorname}
\nc{\cont}{\on{cont}}
\nc{\rmod}{\on{mod}}
\nc{\Mtil}{\widetilde{M}}
\nc{\wb}{\overline}
\nc{\wt}{\widetilde}
\nc{\wh}{\widehat}
\nc{\sm}{\setminus}
\nc{\mc}{\mathcal}
\nc{\mbb}{\mathbb}
\nc{\Mbar}{\wb{M}}
\nc{\Nbar}{\wb{N}}
\nc{\Mhat}{\wh{M}}
\nc{\pihat}{\wh{\pi}}
\nc{\JYX}{\cJ_{Y\leftarrow X}}
\nc{\phitil}{\wt{\phi}}
\nc{\Qbar}{\wb{Q}}
\nc{\DYX}{\D_{Y\leftarrow X}}
\nc{\DXY}{\D_{X\to Y}}
\nc{\dR}{\stackrel{\bbL}{\underset{\D_X}{\ot}}}
\nc{\Winfi}{\cW_{1+\infty}}
\nc{\K}{{\mc K}}
\nc{\unit}{{\bf \on{unit}}}
\nc{\boxt}{\boxtimes}
\nc{\xarr}{\stackrel{\rightarrow}{x}}
\nc{\Cnatbar}{\overline{C}^{\natural}}
\nc{\oJac}{\overline{\on{Jac}}}
\nc{\gm}{{\mathbf G}_m}
\nc{\Loc}{\on{Loc}}
\nc{\Bm}{\operatorname{Bimod}}
\nc{\lie}{{\mathfrak g}}
\nc{\lb}{{\mathfrak b}}
\nc{\lien}{{\mathfrak n}}
\nc{\E}{\mathcal{E}}

\nc{\Gm}{{\mathbb G}_m}
\nc{\Gabar}{\wb{\G}_a}
\nc{\Gmbar}{\wb{\G}_m}
\nc{\PD}{{\mathbb P}_{\D}}
\nc{\Pbul}{P_{\bullet}}
\nc{\PDl}{{\mathbb P}_{\D(\lambda)}}
\nc{\PLoc}{\mathsf{MLoc}}
\nc{\Tors}{\on{Tors}}
\nc{\PS}{{\mathsf{PS}}}
\nc{\PB}{{\mathsf{MB}}}
\nc{\Pb}{{\underline{\operatorname{MBun}}}}
\nc{\Ht}{\mathsf{H}}
\nc{\bbH}{\mathbb H}
\nc{\gen}{^\circ}
\nc{\Jac}{\operatorname{Jac}}
\nc{\sP}{\mathsf{P}}
\nc{\otc}{^{\otimes c}}
\nc{\Det}{\mathsf{det}}
\nc{\PL}{\on{ML}}

\nc{\ml}{{\mathcal S}}
\nc{\Xc}{X_{\on{con}}}
\nc{\sgood}{\text{strongly good}}
\nc{\Xs}{X_{\on{strcon}}}
\nc{\resol}{\mathfrak{X}}
\nc{\map}{\mathsf{f}}
\nc{\sapc}{\on{sap}_c}
\nc{\base}{Z}
\nc{\bigvar}{\mathsf{W}}
\nc{\alg}{\mathsf{A}}

\title{Derived Equivalence for Quantum Symplectic Resolutions}
\author{Kevin McGerty}
\address{Mathematical Institute\\University of Oxford\\Oxford, England, UK}
\email{mcgerty@maths.ox.ac.uk}
\author{Thomas Nevins}
\address{Department of Mathematics\\University of Illinois at Urbana-Champaign\\Urbana, IL 61801 USA}
\email{nevins@illinois.edu}

\begin{abstract}
Using techniques from the homotopy theory of derived categories and noncommutative algebraic geometry,
we establish a general theory of derived microlocalization for quantum symplectic resolutions.  In particular, our results yield a
new proof of derived Beilinson-Bernstein localization and a derived version of the more recent microlocalization theorems of Gordon-Stafford \cite{GS1,GS2} and Kashiwara-Rouquier \cite{KR} as special cases.  We also deduce a new derived microlocalization result linking cyclotomic rational Cherednik algebras with quantized Hilbert schemes of points on minimal resolutions of cyclic quotient singularities.  

\noindent
\textbf{M.S.C.}: 14H60, 18E30.

\noindent
\textbf{Keywords}: Derived equivalences, quantum Hamiltonian reduction, microlocalization.
\end{abstract}

\maketitle

\section{Introduction}
The Localization Theorem of Beilinson-Bernstein (cf. \cite{BB}) introduced the powerful methods of Hodge theory into the representation theory of complex semisimple Lie algebras, with spectacular consequences.  More recently, there has been progress (cf. \cite{MV}, \cite{GS1}, \cite{GS2}, \cite{KR}, \cite{BKu}, \cite{BLPW})
toward a theory of {\em microlocalization}, i.e. localization on (an open set of) the quantized cotangent bundle,  that would apply to a broader class of interesting and important algebras, including Cherednik-type algebras. 

In the present paper, we establish a  general theory of derived microlocalization for many
quantized symplectic resolutions, as well as more general quantized birational symplectic morphisms.  First, we consider algebras obtained by quantum Hamiltonian reduction from quantizations of smooth affine symplectic varieties with group action.  A principal source of examples of such quantizations comes from rings of differential operators on affine varieties with group action; the resulting Hamiltonian reductions are then rings of
global twisted differential operators on algebraic stacks.  Under reasonable technical 
hypotheses (Assumptions \ref{assumptions}) that hold in examples of interest, we prove that the representation categories of such algebras are derived equivalent to microlocal derived categories on open sets in the symplectic quotient stack, whenever the natural pullback functor is cohomologically bounded.

Our approach results in new proofs of
 known derived equivalences for enveloping algebras \cite{BB, BG} independent of 
the original Beilinson-Bernstein approach.  It also establishes the derived version of microlocalization of 
rational Cherednik algebras of type $A$ \cite{KR} and hypertoric enveloping algebras \cite{BKu}, and yields a new derived microlocalization theorem relating cyclotomic rational Cherednik algebras with quantizations of the Hilbert schemes $(\wt{\C^2/\Gamma})^{[n]}$ of points on minimal resolutions $\wt{\C^2/\Gamma}$ of cyclic quotient singularities.  
Along the way, we develop many basic properties of the microlocal derived categories, including compact generation, indecomposability, and, in the case of quantizations of cotangent bundles, an equivalence between the two natural definitions of
 derived categories.  

\subsection{Microlocalization Theorem}
Suppose $\bigvar$ is a smooth affine complex symplectic variety, equipped with a Hamiltonian action of a connected reductive group $G$ and moment map
$\mu: \bigvar\rightarrow\mathfrak{g}^*$.  A standard source of examples comes from smooth affine varieties $\base$ equipped with $G$-action; then taking
$\bigvar = T^*\base$, the moment map $\mu: T^*\base\rightarrow\mathfrak{g}^*$ is the dual of the infinitesimal $G$-action on $\base$.  
One can associate to the above data 
a naive, usually singular, affine symplectic quotient $X \overset{\on{def}}{=} \mu\inv(0)/\!\!/G = \on{Spec}\C[\mu\inv(0)]^G$.
Alternatively, choosing a character $\chi: G\rightarrow \Gm = \C^*$, one can define (as in Section \ref{classical reduction}) a more refined quotient $\resol = \mu\inv(0)/\!\!/\!_{\chi} G$ by geometric invariant theory (GIT).  There is a natural projective morphism $f:\resol\rightarrow X$.   In many interesting examples, the character $\chi$ can be chosen so that $\resol$
gives a symplectic resolution of the singular symplectic variety $X$ (see the discussion in Section \ref{preliminaries}).  

One can also quantize the situation by replacing functions on $\bigvar$ by a filtered noncommutative algebra $\alg$ whose associated graded algebra 
is $\C[\bigvar]$.  For example, in the case $\bigvar = T^*\base$, the canonical choice of $\alg$ is the ring $\D(\base)$ of differential 
operators on $\base$.  The method of quantum Hamiltonian reduction (Section \ref{quantum reduction})
yields an algebra $U_c$ (depending on a Lie algebra character $c: \mathfrak{g}\rightarrow\C$) that functions as a quantum analog of the algebra $\C[X]$ of functions on $X$.

Similarly, as we explain in Section \ref{microlocalization}, there is a natural quantum analog $D(\cE_{\resol}(c))$ of (the derived category of) $\resol$; and  there are natural quantizations $\mathbb{L}\map^*$, $\mathbb{R}\map_*$ of the inverse and direct images 
$\mathbb{L}f^*$, $\mathbb{R}f_*$ associated to $f:\resol\rightarrow X$. When $\bigvar= T^*\base$ and $G$ acts freely on $\mu\inv(0)^{ss}$ and so $\resol$ is a smooth
symplectic variety, the category $D(\cE_{\resol}(c))$ can be described concretely as a (derived) category of modules over a noncommutative deformation of the sheaf  $\theo_{\resol}$ of functions on $\resol$.   In general, however, we define 
$D(\cE_{\resol}(c))$ via a categorical quotient: the resulting category still possesses those invariants such as characteristic cycles which relate the representation theory of $U_c$ to the geometry of $\resol$, but has the added virtue that it makes sense and has good properties without the assumption that the $G$-action on $\mu^{-1}(0)^{ss}$ is free.

The main result of the paper is the following general criterion for showing that $\mathbb{L}\map^*$ and $\mathbb{R}\map_*$ define mutually quasi-inverse equivalences of derived categories.  The criterion holds under a mild set of hypotheses (Assumptions
\ref{assumptions}) that are satisfied in a broad range of examples of interest.
\begin{thm}[Theorem \ref{main equiv thm} and Corollary \ref{first cor}]\label{main thm intro}
Suppose that $\mu$, $X$, $\resol$, and $f: \resol\rightarrow X$ satisfy Assumptions
\ref{assumptions}. Then the following are equivalent. 
\begin{enumerate}
\item The functor
\bd 
\mathbb{L}\map^*: D(U_c)\rightarrow D(\cE_{\resol}(c))
\ed
 is cohomologically bounded (that is, preserves the class of complexes with cohomologies in only finitely many degrees).
 \item 
 The functor $\mathbb{L}\map^*$ defines an exact equivalence of bounded derived categories.
\end{enumerate}
If $\resol$ is a smooth variety obtained as the quotient of the semistable locus $\mu\inv(0)^{ss}$ by a free $G$-action, then these conditions are also equivalent to:
\begin{enumerate}
\item[(3)] The algebra $U_c$ has finite global dimension.
\end{enumerate}
\end{thm}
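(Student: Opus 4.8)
The plan is to establish the cycle of implications $(2)\Rightarrow(1)\Rightarrow(2)$ for the general case, and then $(2)\Leftrightarrow(3)$ under the smoothness/freeness hypothesis. The implication $(2)\Rightarrow(1)$ is essentially formal: an exact equivalence of bounded derived categories is in particular cohomologically bounded, since it carries $D^b$ to $D^b$. So the substance of the first equivalence lies in $(1)\Rightarrow(2)$. Here I would use the abstract machinery developed earlier in the paper (the homotopy-theoretic / noncommutative geometry framework): one shows first that $\mathbb{R}\map_*$ is always defined and that the adjunction counit $\mathbb{L}\map^*\,\mathbb{R}\map_*\to\mathrm{id}$ is an isomorphism, using that $f:\resol\to X$ is a projective birational symplectic morphism, so that the pushforward of the structure sheaf is trivial (a derived-category incarnation of $\mathbb{R}f_*\mathcal{O}_\resol=\mathcal{O}_X$, valid for symplectic resolutions by rational singularities / the results recalled in the preliminaries). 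This gives that $\mathbb{R}\map_*$ is fully faithful and exhibits $D(U_c)$ as a localization (a Verdier quotient) of $D(\cE_\resol(c))$. One then wants the kernel of $\mathbb{R}\map_*$'s left adjoint, or equivalently the subcategory killed by $\mathbb{L}\map^*\mathbb{R}\map_*$-completion, to vanish; the point of hypothesis (1) is precisely to control this. Concretely, I expect the argument to run: compact generation of $D(\cE_\resol(c))$ (established earlier) reduces checking equivalence to checking that $\mathbb{L}\map^*$ sends a compact generator of $D(U_c)$ to a compact generator of $D(\cE_\resol(c))$ and is fully faithful on it; cohomological boundedness of $\mathbb{L}\map^*$ is exactly what promotes the already-known "generic" or "associated-graded" statement to an honest bounded statement, via a filtered/Rees algebra degeneration argument comparing $\mathbb{L}\map^*$ with its classical analogue $\mathbb{L}f^*$ on $\operatorname{coh}(X)$.

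For the equivalence with (3) under the hypothesis that $\resol=\mu\inv(0)^{ss}/G$ is smooth with free action, the key external input is that $\cE_\resol(c)$ is then a sheaf of (filtered) algebras on a smooth symplectic variety whose associated graded is $\mathcal{O}_{\resol}$, hence $D^b(\cE_\resol(c))$ has finite homological dimension (bounded by $\dim\resol$, via the order filtration and the smoothness of $\resol$). Thus $(2)\Rightarrow(3)$ because an equivalence $D^b(U_c)\simeq D^b(\cE_\resol(c))$ forces $U_c$ to have finite global dimension. Conversely, for $(3)\Rightarrow(1)$: if $U_c$ has finite global dimension then $D^b(U_c)=D^b_{\mathrm{coh}}(U_c)$ consists of perfect complexes, and any exact functor out of it — in particular $\mathbb{L}\map^*$ — automatically preserves cohomological boundedness, since perfect complexes go to perfect (hence bounded) complexes. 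So this direction is again soft once one knows $\mathbb{L}\map^*$ preserves perfectness, which follows from $\mathbb{L}\map^*$ being a left adjoint that sends the free module $U_c$ to $\cE_\resol(c)$, a perfect object.

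The main obstacle, and the genuinely hard step, is $(1)\Rightarrow(2)$: showing that cohomological boundedness of $\mathbb{L}\map^*$ upgrades the formal "$\mathbb{R}\map_*$ is fully faithful, so $D(U_c)$ is a quotient of $D(\cE_\resol(c))$" picture to an equivalence, i.e. that the Verdier kernel is zero. The natural strategy is to show the kernel $\ker(\mathbb{R}\map^!\text{-counit})$ is a localizing subcategory that is both nonzero-obstructed and, under (1), forced to be trivial by a support/boundedness argument: a nonzero object in the kernel would, after applying the degeneration to $\operatorname{gr}$, produce a nonzero object of $D_{\mathrm{coh}}(\resol)$ supported on the exceptional locus of $f$ on which $\mathbb{L}f^*$ is unbounded — contradicting (1) via the comparison of $\mathbb{L}\map^*$ with $\mathbb{L}f^*$ through the Rees construction. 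Making this comparison precise — controlling how cohomological amplitude behaves under passing to associated graded, and ensuring the exceptional-locus obstruction is detected rather than lost — is where the real work lies, and I expect it to consume the bulk of the proof of Theorem \ref{main equiv thm} and Corollary \ref{first cor}.
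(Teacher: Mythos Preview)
Your handling of $(2)\Rightarrow(1)$, $(3)\Rightarrow(1)$, and $(2)\Rightarrow(3)$ is fine in outline, but your proposed $(1)\Rightarrow(2)$ has a structural error and is missing the paper's central mechanism.

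First, you have the faithfulness direction backwards. With $\mathbb{L}\map^*$ left adjoint to $\mathbb{R}\map_*$, what holds unconditionally (Lemma~\ref{faithful}, using $\mathbb{R}\map_*\cE_{\resol}(c)=U_c$ and a projection formula) is that the \emph{unit} $\mathrm{id}\to\mathbb{R}\map_*\mathbb{L}\map^*$ is an isomorphism; hence $\mathbb{L}\map^*$ is fully faithful, not $\mathbb{R}\map_*$. Your picture of $D(U_c)$ as a Verdier quotient of $D(\cE_{\resol}(c))$ via $\mathbb{R}\map_*$ is therefore not what is available, and your ``kernel'' discussion is aimed at the wrong subcategory. (Your sketch also has $\mathbb{L}f^*$ applied to objects on $\resol$, which is a domain mismatch.)

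Second, and more seriously, you are missing the actual engine of the paper's proof. The argument does not proceed by a support/degeneration contradiction on a putative kernel. Instead, one constructs a \emph{further right adjoint} $\map^!$ to $\mathbb{R}\map_*$ via Brown representability, and the role of cohomological boundedness is precisely to show that $\mathbb{R}\map_*$ sends a set of compact generators $\cE_{\resol}(c)\otimes\chi^\ell$ to \emph{perfect} $U_c$-complexes (this is the computation~\eqref{projection}, together with Lemma~\ref{bdd implies bdd}); by Neeman's criterion this forces $\map^!$ to preserve coproducts. The second key input is a noncommutative Serre duality theorem (Theorem~\ref{serre duality}), proved by a Rees-algebra/\v{C}ech deformation to classical Serre duality on $\resol$, which yields $\map^! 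U_c \cong \cE_{\resol}(c) = \mathbb{L}\map^* U_c$. Since both $\mathbb{L}\map^*$ and $\map^!$ are continuous and agree on the generator $U_c$, they have the same essential image; combined with the indecomposability of $D(\cE_{\resol}(c))$ (Proposition~\ref{indecomposable}) and the abstract BMR-type criterion (Proposition~\ref{BMR lemma}), this forces the adjunction to be an equivalence. None of $\map^!$, Serre duality, or indecomposability appears in your proposal, and without them there is no visible route from ``$\mathbb{L}\map^*$ fully faithful and bounded'' to ``equivalence.''
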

\noindent
Since condition (3) always implies condition (1) we obtain:
\begin{corollary}[Corollary \ref{fin gl dim}]
If $U_c$ has finite global dimension, then $\mathbb{L}\map^*$ is an exact equivalence of (dg or triangulated) categories.
\end{corollary}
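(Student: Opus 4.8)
The plan is to obtain this as a quick consequence of Theorem~\ref{main thm intro}. Under Assumptions~\ref{assumptions} that theorem asserts, in particular, that condition~(1) --- cohomological boundedness of $\mathbb{L}\map^*$ --- implies condition~(2) --- that $\mathbb{L}\map^*$ is an exact equivalence of bounded derived categories --- with \emph{no} smoothness hypothesis on $\resol$. So the whole task reduces to checking that if $U_c$ has finite global dimension then $\mathbb{L}\map^*$ is cohomologically bounded; this is the implication ``(3) always implies (1)'' alluded to in the text.

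To see this, suppose $U_c$ has global dimension $d < \infty$. Then every $U_c$-module has projective dimension at most $d$, so every object of $D^b(U_c)$ is quasi-isomorphic to a complex of projective $U_c$-modules concentrated in $d+1$ consecutive cohomological degrees. Since a projective module is its own projective resolution, $\mathbb{L}\map^*$ restricted to projectives coincides with the underived pullback, and hence on a bounded complex $P_\bullet$ of projectives $\mathbb{L}\map^*(P_\bullet)$ is computed termwise and lies in finitely many cohomological degrees. Concretely, $\mathbb{L}\map^*$ then has bounded homological amplitude: it sends a complex with cohomology in degrees $[a,b]$ to one with cohomology in degrees $[a-d,b]$. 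This is exactly condition~(1) of Theorem~\ref{main thm intro}, so the theorem yields that $\mathbb{L}\map^*: D(U_c)\rightarrow D(\cE_{\resol}(c))$ is an exact equivalence of bounded derived categories.

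It remains to upgrade the triangulated equivalence to the dg statement, and this is formal: the categories in play carry natural dg enhancements, and $\mathbb{L}\map^*$, $\mathbb{R}\map_*$ are realized by dg quasi-functors between them (they are built from explicit resolutions), so an equivalence on homotopy categories lifts automatically to a quasi-equivalence of dg categories. I do not expect a genuine obstacle here: the only point requiring any care is the claim that $\mathbb{L}\map^*$ of a single projective $U_c$-module is concentrated in degree $0$, which holds precisely because such a module is already its own projective resolution --- everything else is a direct appeal to Theorem~\ref{main thm intro}.
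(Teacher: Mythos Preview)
Your proof is correct and follows essentially the same route as the paper: finite global dimension of $U_c$ implies cohomological boundedness of $\mathbb{L}\map^*$, and then one appeals to the main theorem. Indeed, in the paper Corollary~\ref{fin gl dim} is stated without proof, as an immediate consequence of Corollary~\ref{first cor} (the implication (1)$\Rightarrow$(2), which is labelled ``clear'') together with Theorem~\ref{main equiv thm}.

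One small point: you cite Theorem~\ref{main thm intro} and conclude only an equivalence of \emph{bounded} derived categories, then try to upgrade to the dg/unbounded statement by hand. This last paragraph is unnecessary and slightly misdirected: the body version, Theorem~\ref{main equiv thm}, already asserts the equivalence on the full unbounded derived categories (with the bounded equivalence as a restriction), so once you have established boundedness of $\mathbb{L}\map^*$ you are done. Your ``upgrade'' discussion conflates the bounded/unbounded distinction with the triangulated/dg distinction; neither requires extra work here.
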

\noindent
We note that, in fact, abelian equivalences are known in some examples, cf. for example \cite{GS1}, \cite{GS2}, \cite{KR}, \cite{BKu}; related results were also known to K. Kremnizer, I. Grojnowski, and more recently to  Braden-Proudfoot-Webster \cite{BLPW2}.

Theorem \ref{main thm intro} has the following immediate consequence:
\begin{corollary}
\label{derivedequivalence drho}
Suppose that $U_c$ and $U_{c+d\rho}$ both have finite global dimension, where $\rho: G\rightarrow \Gm$ is a character.  Then $D(U_c)$ and $D(U_{c+d\rho})$ are equivalent.
\end{corollary}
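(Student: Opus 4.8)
The plan is to realize the desired equivalence as a zig-zag through the microlocal categories on $\resol$. By hypothesis both $U_c$ and $U_{c+d\rho}$ have finite global dimension, so Corollary \ref{fin gl dim} applies at each of the two parameters and yields exact equivalences
\[
\mathbb{L}\map^*\colon D(U_c)\xrightarrow{\ \sim\ }D(\cE_{\resol}(c)),\qquad
\mathbb{L}\map^*\colon D(U_{c+d\rho})\xrightarrow{\ \sim\ }D(\cE_{\resol}(c+d\rho)).
\]
Hence it is enough to construct an equivalence $D(\cE_{\resol}(c))\simeq D(\cE_{\resol}(c+d\rho))$ and then compose the three functors.

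The key remaining point is that shifting the quantization parameter by the differential of a genuine character $\rho\colon G\to\Gm$ changes the microlocal sheaf of algebras only by an inner twist. The character $\rho$ equips the structure sheaf of $\mu\inv(0)^{ss}$ with a nontrivial $G$-equivariant structure, and the resulting line bundle descends to a line bundle $\cL_\rho$ on the stack underlying $\resol$ (its pullback to $\mu\inv(0)^{ss}$ is trivial, but $\cL_\rho$ itself is not). Tracking the quantum moment map normalization through the quantum Hamiltonian reduction of Section \ref{quantum reduction}, one checks that conjugation by $\cL_\rho$ carries $\cE_{\resol}(c)$ isomorphically onto $\cE_{\resol}(c+d\rho)$; equivalently, $\cM\mapsto\cL_\rho\otimes_{\cO}\cM$ is an exact equivalence from $\cE_{\resol}(c)$-modules to $\cE_{\resol}(c+d\rho)$-modules. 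Being an exact equivalence of abelian (indeed dg) categories, it passes to the derived/dg level and gives $D(\cE_{\resol}(c))\xrightarrow{\ \sim\ }D(\cE_{\resol}(c+d\rho))$, compatibly with both descriptions of the microlocal derived category via the comparison results of the paper.

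Composing the three equivalences produces
\[
D(U_c)\xrightarrow{\ \mathbb{L}\map^*\ }D(\cE_{\resol}(c))\xrightarrow{\ \cL_\rho\otimes_{\cO}(-)\ }D(\cE_{\resol}(c+d\rho))\xleftarrow{\ \mathbb{L}\map^*\ }D(U_{c+d\rho}),
\]
which proves the corollary. The only step with genuine content beyond invoking Corollary \ref{fin gl dim} is the middle one, and the single place where care is required is checking that $\cL_\rho$ is the correct equivariant line bundle and that conjugation by it lands on the parameter $c+d\rho$ precisely (rather than on $c+d\rho$ shifted by a further term coming from half-forms or from the choice of quantized moment map). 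This is routine once the normalizations of Section \ref{quantum reduction} are made explicit, so I expect no serious obstacle.
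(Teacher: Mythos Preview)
Your proposal is correct and is essentially the approach the paper intends: the paper states the corollary as an ``immediate consequence'' of Theorem~\ref{main thm intro} with no further proof, and your zig-zag through $D(\cE_{\resol}(c))\simeq D(\cE_{\resol}(c+d\rho))$ is exactly what is implicit. The only minor difference is packaging: the paper works algebraically rather than geometrically, defining the twist at the level of $(\alg,G)$-modules via $M\mapsto M\otimes\rho$ and explicitly recording the computation $\gamma_{\alg\otimes\rho,c}=\gamma_{\alg,c-d\rho}$ (hence $M_c(\rho)\cong M_{c-d\rho}\otimes\rho$) in Section~4.1, which disposes of your half-form/normalization worry directly and makes the descent to $\cE_{\resol}(c)$-mod transparent since twisting by a character does not affect singular support.
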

\noindent
In particular, the corollary provides derived equivalences between the algebras $U_c$ even when the natural shift functors ``cross walls.''  We also remark that the theorem does not depend on the group character $\chi$---in particular, if derived microlocalization holds for one such $\chi$ then it holds for any other.  

The conditions of the theorem are known to be satisfied in many instances.  In Section \ref{wreath}, we describe an interesting class of examples, namely the spherical Cherednik algebras associated to wreath products of cyclic groups.  Let $\mu_\ell\cong \Gamma \subset SL(2)$ denote a cyclic subgroup.  The symmetric group $S_n$ acts by 
permutations on the product $\Gamma^n$, and the semidirect product group is the {\em wreath product}, 
denoted by $S_n \wr \mu_\ell$.  By construction, the wreath product is realized as a group of symplectic 
reflections (in the $n$-fold product of $\C^2$); the associated spherical rational Cherednik algebra $U_{k, c}$ 
depends on parameters $k\in\C$ and $c\in\C^{\ell-1}$.  In \cite{Gordon, EGGO}, this algebra is realized via quantum
Hamiltonian reduction.  As we explain in Section \ref{wreath}, combining the constructions of \cite{Gordon, EGGO} and the result of 
\cite{DG} on the {\em aspherical values} of the parameters gives the following result:
\begin{corollary}
For the spherical Cherednik algebra $U_{k,c}$ associated to the wreath product  $S_n \wr \mu_\ell$, the functor $D(U_{k,c})\rightarrow D(\cE_{\resol}(k,c))$ is an exact equivalence of triangulated categories 
away from a finite collection of hyperplanes described by Proposition \ref{cyclotomic global dimension}.
\end{corollary}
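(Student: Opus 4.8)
The plan is to deduce the corollary from the finite-global-dimension consequence of Theorem~\ref{main thm intro}, namely Corollary~\ref{fin gl dim}, once the quantum Hamiltonian reduction presentation of $U_{k,c}$ from \cite{Gordon, EGGO} has been matched with the setup of Assumptions~\ref{assumptions}.

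First I would recall the geometry. The wreath product $S_n\wr\mu_\ell$ is realized as a group of symplectic reflections on the $n$-fold product $(\C^2)^n$, and, following \cite{Gordon, EGGO}, the spherical subalgebra $U_{k,c}$ is presented as a quantum Hamiltonian reduction of the ring $\D(\base)$ of differential operators on the representation space $\base$ of a framed cyclic quiver of affine type $A_{\ell-1}$, with dimension vector determined by $n$, under the natural action of a product $G$ of general linear groups; the Lie algebra character $c:\mathfrak{g}\to\C$ of the general construction corresponds to the pair of Cherednik parameters $(k,c)$. For a generic GIT character $\chi$ the associated resolution $\resol=\mu\inv(0)/\!\!/\!_{\chi}G$ is the corresponding Nakajima quiver variety, here a quantization of the Hilbert scheme $(\wt{\C^2/\Gamma})^{[n]}$ of points on the minimal resolution of $\C^2/\Gamma$, and $f:\resol\to X$ is a symplectic resolution of $X=\Symp^n(\C^2/\Gamma)$. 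The substantive work at this stage is to verify that this data satisfies Assumptions~\ref{assumptions}: smoothness and affineness of the ambient symplectic variety $\bigvar=T^*\base$, existence of the projective morphism $f$, freeness of the $G$-action on the semistable locus $\mu\inv(0)^{ss}$ (which constrains the choice of $\chi$ and the genericity required of $(k,c)$), and the remaining cohomological and $\Gm$-equivariance hypotheses. The relevant facts about representation spaces of quivers and their GIT quotients are standard, but assembling them is the most laborious part.

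With the presentation in hand, Theorem~\ref{main thm intro} reduces the statement to showing that $U_{k,c}$ has finite global dimension for every $(k,c)$ outside a finite union of hyperplanes; this is precisely Proposition~\ref{cyclotomic global dimension}. That proposition in turn rests on the determination of the aspherical values of the cyclotomic rational Cherednik algebra in \cite{DG}: away from the hyperplanes described there, the idempotent truncation $eH_{k,c}e\cong U_{k,c}$ is Morita equivalent to the full cyclotomic Cherednik algebra $H_{k,c}$, and since $H_{k,c}$ always has finite global dimension, so does $U_{k,c}$.

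Finally, for $(k,c)$ outside the hyperplanes of Proposition~\ref{cyclotomic global dimension} the algebra $U_{k,c}$ has finite global dimension, so Corollary~\ref{fin gl dim} applies and $\mathbb{L}\map^*:D(U_{k,c})\to D(\cE_{\resol}(k,c))$ is an exact equivalence of triangulated categories. I expect the main obstacle to be the first step: confirming that the quiver-variety reduction really does fit Assumptions~\ref{assumptions}, and in particular checking freeness of the $G$-action on the semistable locus with the GIT parameter chosen compatibly with the genericity conditions on $(k,c)$.
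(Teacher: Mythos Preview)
Your proposal is correct and follows essentially the same route as the paper: verify Assumptions~\ref{assumptions} for the quiver-variety presentation (flatness of $\mu$ via \cite{GG}, and (2a),(2b),(3) via Lemma~\ref{sympresol} once $\resol\to X$ is a symplectic resolution for a suitable GIT character $\chi$ as in \cite{Gordon2}), then invoke \cite{DG} (through Proposition~\ref{cyclotomic global dimension} and Etingof's criterion) to obtain finite global dimension off the stated hyperplanes, and apply Corollary~\ref{fin gl dim}. One small correction: freeness of the $G$-action on $\mu\inv(0)^{ss}$ is governed by the choice of GIT character $\chi$, not by the Cherednik parameters $(k,c)$, and it is not itself one of Assumptions~\ref{assumptions} but rather what makes $\resol$ smooth so that Lemma~\ref{sympresol} applies.
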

\noindent
An explicit combinatorial description of these hyperplanes is given in Section \ref{wreath}; since the notation is a bit involved we refer the reader to that section for details.

Note that for these examples, Corollary \ref{derivedequivalence drho} can be viewed as a spherical analogue of the derived equivalences established by Gordon and Losev \cite[\S 5.5]{GL} for the full rational Cherednik algebra. Their equivalences are established by completely different means -- namely the quantisation of the  exceptional objects constructed by \cite{BK} (therein called ``weakly Procesi bundles''). In \cite{GL}, such derived equivalences are used in a detailed study of the category $\mathcal O$ representations of these rational Cherednik algebras, and in particular to establish cases of a conjecture \cite[\S 5]{R} of Rouquier.

Theorem \ref{main thm intro} also generalizes to non-affine situations in which a good quotient (in the GIT sense) exists; see Section \ref{good quotient}, and especially Theorem \ref{second thm intro}, for more details.  Here is a sample application.  It is shown in \cite{FG} how to realize Etingof's \cite{Et1} type $A$ Cherednik algebra of an algebraic curve $C$ by quantum Hamiltonian reduction.  Derived microlocalization then follows from Theorem \ref{second thm intro} for the type $A$ Cherednik algebra of an arbitrary smooth $C$ whenever the sheaf of spherical algebras $U_c$ on $\on{Sym}^n(C)$ is locally of finite global dimension.  Using \'etale charts on $C$, one reduces to $C=\mathbb{A}^1$ to show that this happens for the ``usual'' values of the parameter:

\begin{corollary}\label{Ch alg of curve}
Let $C$ be a smooth algebraic curve.  Let $U_c$ denote the sheaf (on $\on{Sym}^n(C)$) of spherical subalgebras of the type $A$ rational Cherednik 
algebra of $C$ as defined in \cite{Et1}.  
Let $D\big(\cE_{(T^*C)^{[n]}}(c)\big)$ denote the associated microlocal category.  Then 
\bd
D\big(\cE_{(T^*C)^{[n]}}(c)\big)\simeq D(U_c) 
\;\;
\text{provided}
\;\;
c\notin\{-\frac{p}{q} \in \mathbb{Q}\; |\; 1\leq p < q\leq n\}.
\ed 
\end{corollary}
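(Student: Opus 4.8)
The plan is to deduce the corollary from the non-affine microlocalization theorem, Theorem~\ref{second thm intro}. By \cite{FG}, the sheaf of spherical type $A$ rational Cherednik algebras $U_c$ on $\on{Sym}^n(C)$, together with its microlocal counterpart $\cE_{(T^*C)^{[n]}}(c)$ on the Hilbert scheme $(T^*C)^{[n]}$---which resolves $\on{Sym}^n(T^*C)$---fits into a quantum Hamiltonian reduction setup of the kind to which Theorem~\ref{second thm intro} applies. That theorem reduces the asserted equivalence $D\big(\cE_{(T^*C)^{[n]}}(c)\big)\simeq D(U_c)$ to a single hypothesis: that $U_c$ is locally of finite global dimension on $\on{Sym}^n(C)$. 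So the whole statement becomes a local computation of global dimension, which I would carry out on \'etale charts.

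First I would pass to the affine line. A point of $\on{Sym}^n(C)$ is an effective divisor $\sum_{i=1}^{r}n_i[x_i]$ with the $x_i$ distinct and $\sum_i n_i=n$; choosing disjoint \'etale coordinate charts $\phi_i\colon V_i\to\mathbb{A}^1$ around the $x_i$ produces an \'etale neighbourhood of this point in $\on{Sym}^n(C)$ isomorphic to $\prod_{i=1}^{r}\on{Sym}^{n_i}(\mathbb{A}^1)=\prod_{i=1}^{r}\mathbb{A}^{n_i}$, so it suffices to bound the global dimension of $U_c$ on each such neighbourhood. The structural input I expect to cost the most work is the identification, under the above isomorphism, of $U_c$ with the external tensor product $e_1H_c(S_{n_1})e_1\boxtimes\cdots\boxtimes e_rH_c(S_{n_r})e_r$, where $e_iH_c(S_{n_i})e_i$ is the spherical subalgebra of the type $A$ rational Cherednik algebra of $\mathbb{A}^1$ on $n_i$ points and $e_i$ the symmetrizing idempotent of $S_{n_i}$. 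This is the factorization property of Etingof's Cherednik sheaf of a curve: away from the big diagonal the construction is local on $\on{Sym}^n(C)$, while near a collision of type $(n_1,\dots,n_r)$ the Dunkl-operator presentation separates accordingly; one must also check that this factorization is compatible with the spherical idempotent $e=\prod_i e_i$, which is formal once the factorization of the full Cherednik sheaf is available (it is implicit in \cite{Et1, FG}).

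Granting the \'etale-local picture, the conclusion follows from two standard facts. For finitely generated $\C$-algebras one has $\on{gl.dim}(A\ot_{\C}B)\le\on{gl.dim}(A)+\on{gl.dim}(B)$, so the external tensor product above has finite global dimension provided each factor does; and the spherical subalgebra $eH_c(S_k)e$ of the type $A$ rational Cherednik algebra on $k$ points has finite global dimension unless $c$ is an aspherical value of $S_k$, which---by the determination of aspherical values (see \cite{DG}; for trivial $\Gamma$ this is the classical computation of Berest--Etingof--Ginzburg)---happens exactly for $c\in\{-p/q\mid 1\le p<q\le k\}$. Every multiplicity $n_i$ occurring at a point of $\on{Sym}^n(C)$ satisfies $n_i\le n$, and the set $\{-p/q\mid 1\le p<q\le n\}$ is unchanged on reducing fractions to lowest terms (reduction only shrinks the denominator); hence for $c\notin\{-p/q\mid 1\le p<q\le n\}$ every \'etale-local model of $U_c$ has finite global dimension, bounded uniformly over the collision patterns. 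Therefore $U_c$ is locally of finite global dimension, Theorem~\ref{second thm intro} applies, and $D\big(\cE_{(T^*C)^{[n]}}(c)\big)\simeq D(U_c)$, as claimed.
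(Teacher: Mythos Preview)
Your proposal is correct and follows essentially the same approach as the paper: invoke \cite{FG} for the quantum Hamiltonian reduction realization, apply Theorem~\ref{second thm intro} to reduce to checking that $U_c$ is locally of finite global dimension, and then use \'etale charts on $C$ to reduce to the affine-line case, where the aspherical values are known. The paper compresses this into a one-sentence sketch (``using \'etale charts on $C$, one reduces to $C=\mathbb{A}^1$''), whereas you spell out the factorization near a collision point $\sum n_i[x_i]$ into a tensor product of spherical algebras for smaller symmetric groups and then invoke the additivity of global dimension under tensor product; this extra care is appropriate and does not depart from the intended argument.
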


An interesting class of examples comes from the {\em Mori dream spaces} \cite{HK}, a class of varieties that includes flag varieties, spherical varieties, and toric varieties as special cases.  The main constructions of the paper can be extended to the setting needed for derived (micro)localization for Mori dream spaces.  
As a consequence, one can reprove derived localization \cite{BG} for flag varieties independently of the classical method of Beilinson-Bernstein, and in particular using only the maximal torus $H$ and not the full Borel action. This example also lends particular interest to condition (1) of Theorem \ref{main thm intro}, since it is shown in \cite{BMR} that it is quite easy to check that $\mathbb{L}\map^*$ is cohomologically bounded (for twists by regular central characters)---significantly
 easier than to prove that the corresponding central reduction of the enveloping algebra has finite global dimension.  
Thus, as we explain in Section \ref{flag varieties}, our approach
also gives a new proof for finite global dimension of the central reductions of the enveloping algebras of semisimple Lie algebras (albeit one that is certainly not simpler than the approach via \cite{BB}).  We discuss only the flag variety case in Section \ref{flag varieties}; we expect to return to the more general subject of Mori dream spaces elsewhere. 

Finally, in Section \ref{DQ} we briefly discuss how one can extend our main theorem to the context of deformation quantizations of arbitrary symplectic resolutions.

\subsection{Methods}
Our approach to Theorem \ref{main thm intro} is, in spirit, similar to the derived Beilinson-Bernstein
localization theorem of \cite{BMR}.  Namely, it is easy to see that the induction functor $\mathbb{L}\map^*$ is faithful, with left quasi-inverse given by its right adjoint $\mathbb{R}\map_*$.  The trick is then to see that the kernel of $\mathbb{R}\map_*$ and image of $\mathbb{L}\map^*$ give an orthogonal decomposition of 
$D(\cE_{\resol}(c))$, and furthermore that the latter category is indecomposable; this would give the equivalence.  Unfortunately, we cannot see how to adapt the method of \cite{BMR}, using the Calabi-Yau property and two-variable Serre duality, directly to establish these properties: \cite{BMR} relied heavily on the Azumaya property, i.e. the large centers of the noncommutative rings appearing.  

Instead, we take something of a detour through more 
homotopical methods, establishing as in \cite{Neeman} the existence of a {\em right} adjoint $\map^!$ to 
$\mathbb{R}\map_*$ and proving that it commutes with colimits.\footnote{The statement of Theorem \ref{main thm intro} itself
is partly inspired by \cite{LN}.}  We then compute the right adjoint on some objects using a more classical version of (noncommutative) Serre-Grothendieck--type duality (Section \ref{NC duality section}).  Although this version of duality may be a consequence of the powerful framework developed by Yekutieli-Zhang (see \cite{YZ-differential} and references there), we develop it by using \v{C}ech methods to reduce to the commutative case; this has the additional advantage of proving quasi-properness of the functor $\mathbb{R}\map_*$.  
Once we have computed $\map^!$ on enough objects, it follows that $\mathbb{L}\map^* = \map^!$, and 
a small modification of  the argument of \cite{BMR} finishes the proof.  

Along the way, we establish a number of fundamental properties of the microlocal category 
$D(\cE_{\resol}(c))$.  For example, we prove that $D(\cE_{\resol}(c))$ is compactly generated 
(Proposition \ref{compact generation}) and indecomposable (Proposition \ref{indecomposable}), and that, in the differential operator case, the 
two natural definitions of $D(\cE_{\resol}(c))$, as the derived category of a corresponding abelian category 
and as a category of weakly equivariant complexes that are only homotopically strongly equivariant,
are equivalent (Proposition \ref{two derived cats agree}).  Although many of these properties play important roles in our proofs, they should also be understood as establishing basic features of the microlocal derived categories that make clear that they are suitable objects for geometric study.  An appendix (Section \ref{proof of serre}) develops the basic tools of \v{C}ech resolutions that we use several times in the body of the paper.

\subsection{Acknowledgments}
The authors are indebted to David Ben-Zvi for extensive conversations, and for patiently answering numerous questions about compactly generated categories; and to Toby Stafford for teaching them about noncommutative \v{C}ech complexes.  The authors are grateful to Gwyn Bellamy, Will Donovan, Iain Gordon, Ivan Losev, Travis Schedler, Susan Sierra, Ben Webster, and Michael Wemyss for very helpful comments and suggestions.  

The first author was supported by a Royal Society research fellowship.  The second author was 
supported by NSF grant DMS-0757987, NSA grant H98230-12-1-0216, and NSF grant DMS-1159468.

\section{Equivalences of Derived Categories}
\label{equiv der cat}
In this section, we lay out a general framework (following the technique of Bridgeland as applied in, for example, \cite{BMR}) for proving derived equivalences.  We also summarize some basics of compactly generated categories.  Whenever we write the word ``colimit'' in a derived category we mean homotopy colimit.

\subsection{General Techniques for Derived Equivalence}
We begin with some useful lemmas.
\begin{lemma}[cf. Lemma 2.7 of \cite{BNa}]\label{cg lemma}
Suppose that $F: C\leftrightarrows D: G$ form an adjoint pair of functors.
\begin{enumerate}
\item If $G$ preserves colimits (i.e. is {\em continuous}) and $d\in D$ is compact then $F(d)$ is compact.
\item If $G$ is faithful and $S$ is a set of generators of $G$ then $F(S)$ is a set of generators of $C$. 
\end{enumerate}
\end{lemma}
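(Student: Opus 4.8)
The plan is to deduce both statements directly from the defining adjunction isomorphism $\Hom_D(F(c),d)\cong\Hom_C(c,G(d))$, natural in $c\in C$ and $d\in D$, together with the fact that a left adjoint between triangulated categories is automatically exact and so commutes with shifts; no homotopical input is needed, and the work is purely in organizing the relevant diagram chases. (Concretely, part (1) is the statement that $F$ carries compact objects of $C$ to compact objects of $D$, and part (2) that $F$ carries a set of generators of $C$ to a set of generators of $D$.) For part (1) I would fix a compact object $c\in C$ and an arbitrary small family $\{d_i\}_{i\in I}$ of objects of $D$ and write out the chain of natural isomorphisms
\begin{align*}
\Hom_D\!\left(F(c),\coprod_{i} d_i\right)
&\cong\Hom_C\!\left(c,G\!\left(\coprod_{i} d_i\right)\right)\\
&\cong\Hom_C\!\left(c,\coprod_{i} G(d_i)\right)\\
&\cong\coprod_{i}\Hom_C\bigl(c,G(d_i)\bigr)\\
&\cong\coprod_{i}\Hom_D\bigl(F(c),d_i\bigr),
\end{align*}
using, in order: the adjunction; that $G$ preserves coproducts (a special case of preserving colimits); compactness of $c$; and the adjunction again. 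I would then check that this composite is exactly the canonical comparison morphism $\coprod_i\Hom_D\bigl(F(c),d_i\bigr)\to\Hom_D\!\left(F(c),\coprod_i d_i\right)$ induced by the structure maps $d_j\to\coprod_i d_i$ --- a routine naturality verification, since every isomorphism in the chain respects those structure maps. Since $F$ is exact, $F(c)[n]\cong F(c[n])$ and $c[n]$ is again compact, so the same computation applies after any shift; hence $F(c)$ is compact.

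For part (2) I would first make precise the meaning of \emph{set of generators}: $S$ generates $C$ exactly when an object $c\in C$ vanishes as soon as $\Hom_C(s[n],c)=0$ for every $s\in S$ and every $n\in\bbZ$ (equivalently $S^{\perp}=0$), and the goal is the analogous statement for $F(S)\subseteq D$. So I would take $d\in D$ with $\Hom_D\bigl(F(s)[n],d\bigr)=0$ for all $s\in S$ and $n\in\bbZ$; using exactness of $F$ to rewrite $F(s)[n]\cong F(s[n])$ and then applying the adjunction gives $\Hom_C\bigl(s[n],G(d)\bigr)\cong\Hom_D\bigl(F(s[n]),d\bigr)=0$ for all such $s,n$, so $G(d)\cong 0$ because $S$ generates $C$. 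Finally I would invoke faithfulness of $G$ to promote $G(d)\cong 0$ to $d\cong 0$: an object is zero precisely when its identity endomorphism vanishes, and $G(\id_d)=\id_{G(d)}=0$, so faithfulness forces $\id_d=0$. Hence $F(S)$ generates $D$.

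I do not expect a genuine obstacle; this is a bookkeeping lemma whose role is to make conditions such as (C2) and (C3) of Proposition \ref{BMR lemma}, and their analogues later in the paper, convenient to check in examples. The two points that deserve a moment's care are: in part (1), confirming that the composite of the displayed isomorphisms is literally the canonical comparison map rather than merely \emph{some} isomorphism of the two abelian groups --- this is the usual chase through the universal properties of coproducts --- and, in part (2), being explicit from the start that \emph{generators} in the triangulated setting means generators up to all shifts, so that the orthogonality argument closes up.
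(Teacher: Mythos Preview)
Your proof is correct. The paper does not supply its own proof of this lemma; it is stated with a reference to \cite{BNa} and used as a black box, so there is nothing to compare against beyond noting that your argument is the standard one. You have also correctly read through the evident typos in the statement (the roles of $C$ and $D$ are swapped in a couple of places: in (1) one should take a compact $c\in C$ and conclude $F(c)\in D$ is compact, and in (2) $S$ should generate $C$ and $F(S)$ should generate $D$), and your interpretation matches how the lemma is actually applied later, e.g.\ in the proof of Proposition~\ref{compact generation}.
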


\begin{lemma}[\cite{Lu}, Lemma 4.7.2]\label{compact = perfect}
Let $A$ be a (possibly noncommutative) $\C$-algebra.  
The compact objects of $D(A)$ are the perfect objects, i.e. the objects quasi-isomorphic to bounded complexes of finitely generated projective left
$A$-modules.  Moreover, $D(A)$ is compactly generated, and every object is a small colimit of compact objects.
\end{lemma}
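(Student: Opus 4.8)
The plan is to exhibit $A$ itself (placed in cohomological degree $0$) as a single compact generator of $D(A)$ and then to read off all three assertions from the standard structure theory of compactly generated triangulated categories. Since $A$ is free, hence projective, one has $\Hom_{D(A)}(A[n],M)=H^{-n}(M)$ for every complex $M$ and every $n\in\bbZ$; because the cohomology of a complex of $A$-modules commutes with arbitrary coproducts, each functor $\Hom_{D(A)}(A[n],-)$ commutes with coproducts, so each $A[n]$ is compact. Moreover, if $\Hom_{D(A)}(A[n],M)=H^{-n}(M)=0$ for all $n$ then $M$ is acyclic, hence zero in $D(A)$; thus $\{A[n]\}_{n\in\bbZ}$ is a set of compact generators, and in particular $D(A)$ is compactly generated.

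To identify the compact objects with the perfect complexes, I would invoke the theorem (see \cite{Neeman}) that in a compactly generated triangulated category the full subcategory of compact objects coincides with the thick subcategory---closed under shifts, cones, and direct summands---generated by any set of compact generators. Applied with the generator $A$, it then suffices to check that this thick subcategory is exactly the class of perfect complexes, and both inclusions are elementary. On one hand a bounded complex of finitely generated projectives is built up from its shifted terms by finitely many cones, and each such term is a direct summand of a finite direct sum of shifts of $A$, so it lies in the thick closure of $A$. On the other hand the perfect complexes visibly form a thick subcategory containing $A$, hence contain its thick closure. Therefore the compact objects of $D(A)$ are precisely the perfect ones.

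Finally, for the assertion that every object is a small colimit of compact objects, I would run the usual cellular-approximation argument: given $M$, build inductively perfect complexes $P_0\to P_1\to\cdots$ with compatible maps $P_i\to M$, at stage $i$ adjoining to $P_i$ a coproduct of shifts of $A$ chosen so as to surject onto the groups $\Hom_{D(A)}(A[n],\operatorname{cone}(P_i\to M))$; then $\Hom_{D(A)}(A[n],\operatorname{hocolim}_i P_i)\to\Hom_{D(A)}(A[n],M)$ is an isomorphism for every $n$, so, since $A$ generates, $\operatorname{hocolim}_i P_i\xrightarrow{\sim}M$. As a homotopy colimit of a sequence is in particular a small colimit, this exhibits $M$ as a small colimit of compacts; equivalently, one may simply cite that a compactly generated category is the Ind-completion of its full subcategory of compact objects \cite{Lu}. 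The only non-formal ingredient in any of this is the theorem equating compact objects with the thick closure of a set of compact generators---the rest being a direct computation (compactness and generation of $A$) or routine bookkeeping---so I expect the one point requiring care to be precisely that citation, which itself rests on Brown representability together with a Bousfield-localization d\'evissage.
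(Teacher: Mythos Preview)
Your argument is correct and is the standard route to this result. Note, however, that the paper does not supply its own proof: the lemma is stated with an attribution to \cite{Lu}, Lemma~4.7.2, and no proof appears in the text. So there is nothing to compare against beyond the citation; your write-up simply fills in what the paper takes as a black box. The one point worth tightening is the identification of compacts with the thick closure of $A$: the reference you want is Neeman's characterization (e.g.\ \cite{Neeman}, Lemma~2.2, or the version in \cite{BN}), and with that in hand the rest is, as you say, routine.
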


\begin{lemma}\label{ind-category}
If $C$ is a compactly generated cocomplete dg category with a set $S$ of compact generators, and $\wt{S}$ denotes the smallest full triangulated category containing $S$, then any object of $C$ is a colimit of a diagram in $\wt{S}$. 

\end{lemma}
\begin{proof}
This is a version of Lemma 2.2.1 of \cite{SS}. See also Theorem 2.1 of \cite{Neeman}.
\end{proof}

Let $C, D$ be triangulated categories and 
\bd
F: C \leftrightarrows D: G
\ed
an adjoint pair of exact functors.  Consider the following conditions:
\begin{enumerate}
\item[(C1)] $D$ is compactly generated.
\item[(C2)] $G$ preserves coproducts.
\item[(C3)] $G$ takes a set $S$ of compact generators of $D$ to compact objects of $C$.
\item[(C4)] There is a set $S$ of compact generators of $C$ for which $F(U) \cong F^!(U)$ for all $U\in S$.
\item[(C5)] The adjunction $1_C\rightarrow G\circ F$ is an isomorphism.
\item[(C6)] $D$ is indecomposable.  
\end{enumerate}

\begin{lemma}[cf. \cite{BMR}, Lemma 3.5.2]\label{BMR lemma precursor}
Let $F: C\rightarrow D$ be a faithful exact functor of triangulated categories with a right adjoint $G: D\rightarrow C$ satisfying (C5) and also
\begin{enumerate}
\item the functor $G$ has a right adjoint $H: C\rightarrow D$;
\item the functors $F$ and $H$ have the same essential image; and
\item the category $D$ is indecomposable.
\end{enumerate}
Then $F, G,$ and $H$ are equivalences.
\end{lemma}

\begin{prop}\label{BMR lemma}
Suppose that $C, D$ are triangulated categories with an adjoint pair $F:C\leftrightarrows D: G$ of exact functors.  
\begin{enumerate}
\item If these data satisfy conditions (C1)-(C3) above, then $G$ has an exact right adjoint $F^!: C\rightarrow D$ that preserves colimits.  
\item If these data in addition satisfy conditions (C4)-(C6), then $F, G$ are mutually quasi-inverse equivalences of categories.
\end{enumerate}
\end{prop}
\begin{proof}
Conditions (C1) and (C2) imply $F^!$ exists by Theorem 4.1 of \cite{Neeman}.  Condition (C3) implies that $F^!$ preserves coproducts by Theorem 5.1 of \cite{Neeman}; then $F^!$ preserves colimits by:
\begin{lemma}[Lemma 4.1 of \cite{BN}]\label{colimits}
If $F^!$ preserves coproducts then it preserves all small colimits.
\end{lemma}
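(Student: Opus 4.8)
The plan is to deduce preservation of all small colimits from the two facts we already have about $F^!$ --- that it is exact (hence preserves mapping cones, being a triangulated functor) and that it preserves coproducts --- by recalling that, in a cocomplete additive/triangulated setting, every small colimit is built from coproducts and cones. First I would recall the standard presentation of a colimit as a coequalizer: for a diagram $X\colon I\to C$, the object $\operatorname{colim}_I X$ is the coequalizer of the two natural maps
\[
\coprod_{(\phi\colon i\to j)\in\operatorname{Mor}(I)} X_i \;\rightrightarrows\; \coprod_{i\in\operatorname{Ob}(I)} X_i ,
\]
one induced by the identity inclusions and the other by the transition morphisms $X(\phi)$. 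Since $C$ is additive, this coequalizer is the same as the cokernel of the difference $f-g$ of the two maps; and in the triangulated (dg) setting a cokernel of a morphism is computed by its mapping cone.

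Given this reduction, the argument is formal: $F^!$ preserves the two coproducts above by hypothesis, hence preserves the morphism $f-g$ between them, and, being exact, preserves the cone on that morphism. Therefore $F^!(\operatorname{colim}_I X)\cong\operatorname{Cone}\big(F^!(f-g)\big)\cong\operatorname{colim}_I(F^!\circ X)$, with the isomorphism compatible with the structure maps, which is exactly the assertion. For the specific use in Proposition \ref{BMR lemma} it in fact suffices to treat sequential (hence general filtered) homotopy colimits, which by B\"okstedt--Neeman \cite{BN} are realized as the cone of the ``shift minus identity'' endomorphism of a countable coproduct; for those objects the coequalizer bookkeeping above is unnecessary and the proof is immediate from ``$F^!$ exact and coproduct-preserving.''

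The one point requiring genuine care --- and the main obstacle --- is the meaning of ``all small colimits'' in a triangulated category, since arbitrary coequalizers need not exist there. I would resolve this by interpreting the statement for the colimits that do exist in the ambient cocomplete enhanced triangulated category, equivalently the homotopy colimits, and by performing the reduction above at the level of the enhancement (the underlying stable $\infty$-category or pretriangulated dg category), where finite colimits agree with finite homotopy colimits and are governed by the triangulated structure. In our applications $C=D(U_c)$ and $D=D(\cE_{\resol}(c))$ carry such enhancements and the relevant colimits are exactly these homotopy colimits, so the argument applies; the homotopy colimit being defined only up to (non-canonical) isomorphism is harmless for the purpose of preservation. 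Beyond this foundational bookkeeping there is nothing to check.
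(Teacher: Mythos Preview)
Your argument is correct and is essentially the content of the cited reference: the paper does not supply its own proof but simply invokes \cite[Lemma~4.1]{BN}, whose argument is precisely the ``cone of a map of coproducts'' presentation of a homotopy colimit that you reproduce. Your discussion of the foundational point (that ``all small colimits'' here must be read as homotopy colimits in the enhanced sense) is accurate and is the only genuine subtlety.
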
  
To prove the equivalence statement, we apply Lemma \ref{BMR lemma precursor};
by this lemma (and conditions (C5) and (C6)), the final claim of Proposition \ref{BMR lemma} will follow once we show that $F$ and $F^!$ have the same essential image.  But this follows from condition (C4) by Lemma \ref{ind-category}.  This completes the proof of Proposition \ref{BMR lemma}.
\end{proof}

\section{Group Actions and Hamiltonian Reduction}\label{preliminaries}
\begin{basic assumption}
Throughout the paper, we assume all varieties $\bigvar$ and groups $G$ are connected.
\end{basic assumption}

\subsection{Conventions on Group Actions}
Suppose a group $G$ acts on a smooth variety $\bigvar$.  Given a character $\chi: G\rightarrow \Gm$, we make the trivial line bundle 
$\mathsf{L} = \bigvar\times {\mathbb A}^1$ into a $G$-equivariant line bundle via $g\cdot (x, z) = (g\cdot x, \chi(g)z)$.  
We will use the following conventions.  

Suppose that $\bigvar$ is affine.  For $f\in \C[\bigvar]$, $g\in G$, we let
$(g\cdot f)(x) = f(g\inv x)$.  If $\bigvar = T^*\base$, and $f\in \C[\base]$, $\theta\in \D(\base)$, and $g\in G$, we let
$(g\cdot \theta)(f) = g\cdot (\theta(g\inv\cdot f))$.  

Recall that a function 
$f: \bigvar\rightarrow {\mathbb A}^1$ is a {\em relative invariant} or {\em semi-invariant} of weight $\chi$ if $f(g\cdot x) = \chi(g) f(x)$ for all $g\in G$ 
and $x\in \bigvar$.  Suppose that $F: \bigvar\rightarrow \mathsf{L} = \bigvar\times \mathbb{A}^1$ is a section, and write 
$F(x) = (x, f(x))$ for a function $f: \bigvar\rightarrow \mathbb{A}^1$.  Then 
$g\cdot F(x) = (gx, \chi(g)f(x))$, and so $F$ is $G$-equivariant if and only if $f$ is $\chi$-semi-invariant. 

  We write $\C[\bigvar]^{G,\chi}$ for the $\chi$-semi-invariant functions on $\bigvar$.
\begin{example}
Let $\bigvar=\C^{n+1}$ with $\Gm$ acting via the vector space structure.  Letting $\chi(w) = w^\ell$, we find that a $\chi$-semi-invariant is a 
homogeneous polynomial of degree $\ell$, i.e. a section of $\theo(\ell)$ on  ${\mathbb P}^n$.
\end{example}

\subsection{Classical Hamiltonian Reduction}\label{classical reduction}
Suppose that $G$ acts on a smooth affine symplectic variety $\bigvar$ with symplectic form $\Omega$.   We assume that $\bigvar$ is equipped with a $G$-equivariant moment map $\mu: \bigvar \rightarrow \mathfrak{g}^*$.  

\begin{principal example}
Our principal example throughout the paper will be the following.  Suppose $G$ acts on a smooth affine variety $\base$.  
The $G$-action induces an infinitesimal $\mathfrak{g}$-action $\mu: \mathfrak{g}\rightarrow H^0(T_{\base})$, 
which can be interpreted as a vector space map $\mathfrak{g}\rightarrow \C[T^*\base]$.  The latter map thus defines a map of varieties (which we denote by the same symbol)
$\mu: T^*\base\rightarrow \mathfrak{g}^*$, the {\em classical moment map} for the group action.  
\end{principal example}

Given a smooth affine symplectic $G$-variety $\bigvar$ with moment map $\mu$, write 
\bd
X = \mu^{-1}(0)/\!\!/G = \on{Spec} \C[\mu^{-1}(0)]^G
\ed for the affine quotient variety.  This is a (typically singular) algebraic variety.

We will proceed to choose an additional character $\chi: G\rightarrow \Gm$ of the group $G$.\footnote{Later, when doing quantum Hamiltonian reduction, we will choose a Lie algebra character $c:\mathfrak{g}\rightarrow\C$; these choices need not be related.}  
 As in \cite{King}, this yields a variety
\bd
\resol = \mu^{-1}(0)/\!\!/_{\chi}G = \on{Proj}\Big(\bigoplus_{\ell\geq 0} \C[\mu^{-1}(0)]^{G,\chi^\ell}\Big).
\ed
The variety $\resol$ comes equipped with a projective morphism $f: \resol\rightarrow X$.  We make the following assumptions:
\begin{assumptions}\label{assumptions}
\mbox{}
\begin{enumerate}
\item The moment map $\mu$ is flat.
\item[(2a)] The affine quotient $X = \mu\inv(0)/\!\!/G$ is Gorenstein with trivial dualizing sheaf 
$\omega_X\cong \theo_X$.
\item[(2b)] The GIT quotient $\resol = \mu\inv(0)/\!\!/\!_{\chi} G$ is Gorenstein with trivial dualizing
sheaf $\omega_{\resol} \cong \theo_{\resol}$.
\item[(3)] Under the morphism $f: \resol\rightarrow X$, we have $\mathbb{R}f_*\theo_{\resol} = \theo_X$.
\end{enumerate}
\end{assumptions}
\noindent
By Grauert-Riemenschneider, if $\resol$ is smooth, $X$ is normal, and $f$ is birational, then (2a) and (2b) together imply (3).
In the next sections, we discuss conditions under which the assumptions hold.

\subsection{Flatness of the Moment Map}
We discuss flatness of $\mu$ for $\bigvar = T^*\base$.

\begin{lemma}\label{goodness lemma}
Suppose that  $\bigvar = T^*\base$ where $\base$ is a smooth variety with $G$-action and the $G$-action on $\bigvar$ is the one induced from $\base$.
Let $\mu: \bigvar\rightarrow \lie^*$ denote the moment map and $N = \mu^{-1}(0)$.  
Then the following conditions are equivalent:
\begin{enumerate}
\item The map $\mu$ is flat and dominant.
\item $N$ is a $G$-equivariant complete intersection in $\bigvar$ of dimension $\on{dim}(\bigvar)-\on{dim}(G)$.
\item $N$ has dimension $2\on{dim}(\base)-\on{dim}(G)$.
\item Letting $[\base/G]$ denote the quotient stack, $\on{dim}(T^*[\base/G]) = 2\on{dim}([\base/G])$.  
\item One has $\on{codim}\{y\in \base \; |\; \on{dim}(G_y) = n\} \geq n$ for all $n\geq 1$.
\end{enumerate}
\end{lemma}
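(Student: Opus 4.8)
The plan is to treat condition (3)---that $N=\mu^{-1}(0)$ has the expected dimension $2\dim(\base)-\dim(G)$---as the hub, and to prove $(2)\Leftrightarrow(3)$, $(1)\Leftrightarrow(3)$, $(3)\Leftrightarrow(4)$ and $(3)\Leftrightarrow(5)$ in turn. Throughout one uses that $\dim(\bigvar)=\dim(T^*\base)=2\dim(\base)$, so the expected dimension $\dim(\bigvar)-\dim(G)$ appearing in (2) coincides with the one in (3).

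The ``local'' equivalences come first. For $(2)\Leftrightarrow(3)$: the closed subscheme $N\subseteq\bigvar$ is cut out by the $\dim(G)$ functions on $\bigvar$ obtained by pulling linear coordinates on $\lie^*$ back along $\mu$, and it is automatically $G$-stable since $\mu$ is $G$-equivariant and $0\in\lie^*$ is fixed; because $\bigvar$ is smooth, hence Cohen--Macaulay, such a subscheme is a complete intersection of dimension $\dim(\bigvar)-\dim(G)$ precisely when it has that dimension, which is (3). For $(1)\Leftrightarrow(3)$: by the miracle-flatness criterion (a finite-type morphism from a Cohen--Macaulay scheme to a regular scheme is flat if and only if every fibre has the expected dimension), $\mu$ is flat iff every nonempty fibre has dimension $\dim(\bigvar)-\dim(G)$. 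A flat morphism is open, so its image is dense in the irreducible $\lie^*$, whence dominance and, taking the fibre over $0$, condition (3). Conversely, assuming (3): the zero section shows $0\in\operatorname{im}(\mu)$, so if $\mu$ were not dominant the fibre over $0$ would have dimension $>\dim(\bigvar)-\dim(G)$, contradicting (3); hence $\mu$ is dominant and every fibre over $\operatorname{im}(\mu)$ has dimension $\ge\dim(\bigvar)-\dim(G)$. For the reverse bound I would use the contracting $\Gm$-action scaling the cotangent fibres of $\bigvar=T^*\base$: since $\mu$ is equivariant for the linear $\Gm$-action on $\lie^*$, the locus where the fibre dimension exceeds $\dim(\bigvar)-\dim(G)$ is closed and $\Gm$-stable, so if nonempty it would meet the (attracting) zero section, which lies in $N$, forcing $\dim(N)>\dim(\bigvar)-\dim(G)$---again a contradiction. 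Hence $\mu$ is flat.

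Next, $(3)\Leftrightarrow(4)$ is immediate from the standard identification $T^*[\base/G]\cong[\mu^{-1}(0)/G]=[N/G]$ of the cotangent stack of the quotient stack with the stacky reduction at $0$: taking dimensions, $\dim(T^*[\base/G])=\dim(N)-\dim(G)$ and $\dim([\base/G])=\dim(\base)-\dim(G)$, so (4) is literally (3). The real content is $(3)\Leftrightarrow(5)$, which I would obtain by computing $\dim(N)$ through the stratification of $\base$ by stabiliser dimension. Write $\base_n=\{z\in\base:\dim(G_z)=n\}$; only finitely many of these are nonempty, and $\base_n$ is the preimage of the locus in (5) under the codimension-preserving map $\base\to[\base/G]$. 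Over a point $z\in\base_n$ the fibre of $N\to\base$ is the annihilator $(\lie\cdot z)^{\perp}\subseteq T_z^*\base$ of the orbit tangent space, of dimension $\dim(\base)-(\dim(G)-n)$, which is constant along $\base_n$; hence the part of $N$ lying over $\base_n$ has dimension $\dim(\base_n)+\dim(\base)-\dim(G)+n$, and
\[
\dim(N)=\dim(\base)-\dim(G)+\max_n(\dim(\base_n)+n).
\]
Thus (3) is equivalent to $\max_n(\dim(\base_n)+n)=\dim(\base)$; since $\dim(\base_0)\le\dim(\base)$ always, this is in turn equivalent to the codimension estimate of condition (5) on the strata with $n\ge1$, and one checks that this estimate forces each such $\base_n$ to be proper, hence $\base_0$ dense, so the maximum is attained at $n=0$.

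I expect the main obstacle to lie in the dimension bookkeeping of the $(3)\Leftrightarrow(5)$ step: one must check that the fibre dimension of $N\to\base$ is genuinely constant along each (possibly reducible or singular) stratum $\base_n$, so that $\dim(N|_{\base_n})=\dim(\base_n)+\dim(\base)-\dim(G)+n$; that only finitely many strata contribute; and that codimension is unchanged under $\base\to[\base/G]$. A secondary point requiring care is the $\Gm$-contraction argument in $(3)\Rightarrow(1)$, which relies on the cotangent-scaling action being attracting onto the zero section together with upper semicontinuity of fibre dimension.
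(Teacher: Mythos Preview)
Your approach is essentially that of the paper: the paper's argument runs $(1)\Rightarrow(2)\Rightarrow(3)\Rightarrow(1)$ using the Koszul resolution and the asymptotic cone construction (which is precisely your $\Gm$-contraction argument, read on the source rather than the target), identifies $(3)\Leftrightarrow(4)$ via $T^*[\base/G]\cong[N/G]$, and simply cites \cite{BD} for $(4)\Leftrightarrow(5)$. You have reorganized things around (3) as a hub and spelled out the last step explicitly via the stratification of $\base$ by stabilizer dimension, but the ingredients are the same.

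One genuine issue in your $(3)\Leftrightarrow(5)$ step: your stratification computation gives
\[
\dim N \;=\; \dim\base - \dim G + \max_n\bigl(\dim\base_n + n\bigr),
\]
and since $N$ is cut out by $\dim G$ equations in the smooth variety $\bigvar$ one always has $\dim N \geq 2\dim\base - \dim G$. Hence (3) is equivalent to $\dim\base_n + n \leq \dim\base$ for all $n$, i.e.\ to $\operatorname{codim}\base_n \geq n$ for $n \geq 1$. This non-strict inequality is the condition actually appearing in \cite{BD}, but it is \emph{weaker} than the strict inequality written in (5). In fact (5) as literally stated is not equivalent to (1)--(4): for $G = \Gm$ acting linearly on $\base = \mathbb{A}^1$ one has $\mu(x,p) = xp$, which is flat with $\dim N = 1 = 2\cdot 1 - 1$, yet $\base_1 = \{0\}$ has codimension exactly $1$. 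So your computation is correct and in fact exposes that the ``$>$'' in (5) should read ``$\geq$''; your asserted implication $(3)\Rightarrow(5)$ does not (and cannot) produce the strict inequality, and you should flag this rather than claiming the full equivalence.
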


\begin{defn}
[Beilinson-Drinfeld, Page 6 of \cite{BD}] We call $[\base/G]$ satisfying the conditions of Lemma \ref{goodness lemma}  {\em good} or {\em good for lazybones}.  
\end{defn}

\begin{example}
Associated to any finite subgroup $\Gamma\subset SL(2)$ and any $n\geq 1$ there is a representation $\base= \mathcal{M}(\Gamma, n)$ of a reductive group $G = G(\Gamma, n)$ \cite{GG}.  It is shown in \cite{GG} that the moment map for this $\bigvar = T^*\base$ and $G$ is
always flat.  
\end{example}
Flatness of the moment map in a general quiver setting is characterized in \cite{CB-flat}.

\subsection{Gorenstein Property and Rational Singularities}
Assumption \ref{assumptions}(3) is a weak version of $X$ having rational singularities: one could say $X$ ``has rational singularities 
with respect to $\resol$.''  Indeed:
\begin{remark}
If $\resol$ is smooth and $f$ is birational then Assumption \ref{assumptions}(3) exactly says
that $X$ has rational singularities.  Note that it implies, in particular, that $X$ is normal: if not, then
the map $f$ would factor through the normalization $\wt{X}$ of $X$, and the composite 
$\theo_X\rightarrow \theo_{\wt{X}}\rightarrow f_*\theo_{\resol}$ would split $\theo_X\rightarrow\theo_{\wt{X}}$, which cannot happen because $\wt{X}$ is integral.
\end{remark}

Assumptions \ref{assumptions}(2a), (2b), (3) are understood in the case when $f$ is a symplectic resolution of a singular symplectic variety (\cite{B}, Definition 1.1), as the following shows.
\begin{lemma}
\label{sympresol}
If $f\colon \resol \to X$ is a symplectic resolution, then \ref{assumptions}(2a),(2b),(3) hold.
\end{lemma}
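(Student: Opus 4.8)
The plan is to verify the three conditions in increasing order of difficulty, using only that $\resol$ is smooth and carries a symplectic form, that $f$ is proper and birational, and that $X$ is normal (which comes with the hypothesis that $f$ is a symplectic resolution, symplectic singularities being normal by definition). Condition \ref{assumptions}(2b) is immediate: since $\resol$ is smooth, its dualizing sheaf is the canonical bundle $\Omega^{2n}_{\resol}$ with $2n=\dim\resol$, and if $\omega$ denotes the symplectic form then non-degeneracy says precisely that $\omega^{\wedge n}$ is a nowhere-vanishing global section of $\Omega^{2n}_{\resol}$; hence $\omega_{\resol}\cong\theo_{\resol}$, and $\resol$ is trivially Gorenstein.

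For condition \ref{assumptions}(3), I would invoke Grauert--Riemenschneider vanishing for the resolution $f$, namely $R^i f_*\omega_{\resol}=0$ for all $i>0$; combined with the trivialization $\omega_{\resol}\cong\theo_{\resol}$ from the previous step this gives $R^i f_*\theo_{\resol}=0$ for $i>0$. It remains to identify $f_*\theo_{\resol}$ with $\theo_X$: since $f$ is proper and birational, $f_*\theo_{\resol}$ is a finite $\theo_X$-algebra inside the function field of $X$, hence equals $\theo_X$ because $X$ is normal. Therefore $\mathbb{R}f_*\theo_{\resol}=\theo_X$.

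Finally, for condition \ref{assumptions}(2a) I would run Grothendieck--Serre duality for the proper morphism $f$. With $\omega_X^{\bullet}$ and $\omega_{\resol}^{\bullet}$ the dualizing complexes normalized compatibly over $\spec\C$, one has $f^!\omega_X^{\bullet}\cong\omega_{\resol}^{\bullet}$, and duality yields $\mathbb{R}f_*\omega_{\resol}^{\bullet}\cong\mathbb{R}f_*\mathbb{R}\uHom_{\resol}(\theo_{\resol},f^!\omega_X^{\bullet})\cong\mathbb{R}\uHom_X(\mathbb{R}f_*\theo_{\resol},\omega_X^{\bullet})\cong\omega_X^{\bullet}$, the last step by condition \ref{assumptions}(3). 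On the other hand, smoothness of $\resol$ gives $\omega_{\resol}^{\bullet}\cong\Omega^{2n}_{\resol}[2n]\cong\theo_{\resol}[2n]$ by \ref{assumptions}(2b), so a second application of \ref{assumptions}(3) gives $\mathbb{R}f_*\omega_{\resol}^{\bullet}\cong\theo_X[2n]$; since $f$ is birational, $2n=\dim\resol=\dim X$, and comparing the two computations yields $\omega_X^{\bullet}\cong\theo_X[\dim X]$, i.e. $X$ is Gorenstein with $\omega_X\cong\theo_X$. The only genuinely external ingredient is the normality of $X$, which I expect to be the one point requiring care; the rest is bookkeeping with Grauert--Riemenschneider and Grothendieck duality. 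One could instead prove \ref{assumptions}(2a) by noting that the symplectic form trivializes $\omega_X$ on the codimension-$\geq 2$ smooth locus and then extending, but that still requires knowing $X$ is Gorenstein, so I prefer the duality argument, which produces both statements at once.
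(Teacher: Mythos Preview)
Your proof is correct and takes a genuinely different route from the paper. The paper simply cites Beauville's result (Proposition~1.3 of \cite{B}) that a symplectic variety is Gorenstein with rational singularities, which immediately yields (2a) and (3); for (2a) it then observes that the top power of the symplectic form on $X^{\mathrm{sm}}$ extends to trivialize $\omega_X$ (using the Gorenstein property and normality just established). You instead prove (3) directly from Grauert--Riemenschneider combined with the trivialization $\omega_{\resol}\cong\theo_{\resol}$ from (2b), and then deduce (2a) from (3) via Grothendieck duality, comparing the two computations of $\mathbb{R}f_*\omega_{\resol}^{\bullet}$. Your argument is more self-contained and in effect reproves the relevant part of Beauville's proposition in this special setting; it also makes transparent that the only feature of ``symplectic resolution'' actually used is that $f$ is a resolution of a normal variety with $\omega_{\resol}$ trivial. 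The paper's approach is shorter because it outsources the work to a citation. Your closing remark about the alternative extension argument for (2a) is exactly what the paper does, once Gorenstein is in hand from Beauville.
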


\begin{proof}
(2b) holds by hypothesis.
By assumption, $X$ is a symplectic variety, which by Proposition 1.3 of \cite{B} implies it is Gorenstein with rational singularities, so (3) holds.  
In particular, $X$ is normal, so the top exterior power of the symplectic form on $X^{sm}$ extends to a nonvanishing regular section of $\omega_X$, proving (2a).
\end{proof}

\subsection{Reduction for Tori}
\begin{lemma}
If $G$ is a torus, then, replacing $G$ with its image in $\on{Aut}(\bigvar)$, Assumptions \ref{assumptions}(1) and (3) hold.  If $\bigvar$ is a vector space, $G$ is a torus, and the action is linear and unimodular (see \cite{BKu} for definitions), then for a generic choice of character, Assumption \ref{assumptions}(2) holds.
\end{lemma}
\begin{proof}
Suppose $G$ is a torus.  Then, because subgroups of $G$ do not come in continuous families, the stabilizer of a generic point of $\base$ equals the kernel of $G\rightarrow \on{Aut}(\base)$.  Then by Remark 9.2(5) and Proposition 9.4(2) of \cite{Sch}, after replacing $G$ by its image
in $\on{Aut}(\base)$ the moment map becomes flat.
Henceforth, we may assume that the moment map is flat.  

The statement about Assumption \ref{assumptions}(2) is Proposition 4.11 of \cite{BKu}.

Assuming that the moment map is flat, pulling back the Koszul resolution $K(\mathfrak{g})\rightarrow \C$ of the trivial $\C[\mathfrak{g}^*]$-module to $\C[\bigvar]$ gives a
free equivariant resolution of $\C[\mu\inv(0)]$ with terms $\C[\bigvar]\otimes\wedge^k(\mathfrak{g}^*)$; since $G$ is abelian these are even equivariantly free.  Note now that $\mu\inv(0)^{ss} = \bigvar^{ss}\cap\mu\inv(0)$.  It follows that, letting $j: \bigvar^{ss}\hookrightarrow \bigvar$ denote the inclusion, we have that
$(\mathbb{R}j_*\theo_{\mu\inv(0)^{ss}})^G \simeq \big(\mathbb{R}j_*\theo_{\bigvar^{ss}}\otimes K(\mathfrak{g})\big)^G$.  By \cite{T}, one has  
\bd
\big(\mathbb{R}j_*\theo_{\bigvar^{ss}}\otimes K(\mathfrak{g})\big)^G \simeq \big(\theo_{\bigvar}\otimes K(\mathfrak{g})\big)^G \cong \C[\mu\inv(0)]^G.  
\ed
This establishes (3).   
 \end{proof}

\subsection{Quantum Hamiltonian Reduction}\label{quantum reduction}
Suppose again that $G$ acts on a smooth affine symplectic variety $\bigvar$ with moment map $\mu$.   

\vspace{.4em}

\begin{center}
{\em We suppose throughout the remainder of the 
paper that Assumptions \ref{assumptions} hold.}  
\end{center}

\vspace{.4em}

We assume that we are given a {\em filtered quantization} $(\alg, F, \mu, \phi)$ of our geometric setting.  By this we mean the following.
\begin{enumerate}
\item $\alg$ is a nonnegatively filtered noncommutative $\C$-algebra whose associated graded $\on{gr}\,\alg = \on{gr}_F\,\alg$ comes equipped with an isomorphism $\phi: \on{gr}\,\alg\rightarrow \C[\bigvar]$ of Poisson algebras.
\item $\alg$ comes equipped with a rational left $G$-action that preserves the filtration $F$ and makes the isomorphism $\phi: \on{gr}\,\alg \rightarrow\C[\bigvar]$ $G$-equivariant.
\item $\alg$ comes equipped with a choice of integer $n\geq 1$ and a  $G$-equivariant Lie algebra homomorphism $\mu: \mathfrak{g}\rightarrow \alg_n = F_n\,\alg$ whose associated graded 
$\mu: \mathfrak{g}\rightarrow \on{gr}_n\alg$ equals the moment map for the $G$-action on $\bigvar$.
\item Letting $\alpha: \mathfrak{g}\rightarrow \on{End}_{\C}(\alg)$ denote the derivative of the $G$-action on $\alg$, we have
$[\mu(z), \theta] = \alpha(z)(\theta)$ for all $z\in\mathfrak{g}, \theta\in\alg$.
\end{enumerate}
In particular, the isomorphism $\phi$ equips $\C[\bigvar]$ with a nonnegative $G$-equivariant grading.  The grading induces a $\Gm$-action on $\bigvar$ that commutes with the $G$-action and makes $\mu$ into a $\Gm$-equivariant moment map for $\bigvar$ (provided one uses the weight $n$ action of $\Gm$ on $\mathfrak{g}^*$).   We refer to $\mu: \mathfrak{g}\rightarrow\alg_n$, and the induced map (which we also denote by $\mu$) $\mu: U(\mathfrak{g})\rightarrow \alg$, as {\em quantum (co)moment maps}. 

Since, by assumption, the ring $\alg$ is nonnegatively filtered, it is {\em Zariskian} in the terminology of \cite{LvO}.  In particular, the Rees ring $\cR(\alg) =  \oplus_{k\in{\mathbb Z}} F_k(\alg)t^k$ is (left and right) noetherian.  The Rees module of a filtered $\alg$-module $M$ is 
$\cR(M) = \oplus_{k\in{\mathbb Z}} M_kt^k \subseteq M[t,t^{-1}]$.  Note that $\cR(\alg)$ is naturally a $\C[t]$-algebra and $\cR(M)$ is a $\C[t]$-module. As usual, $\cR(M)/t\cR(M)$ is canonically isomorphic to $\on{gr}_F(M)$ as a $\cR(\alg)/t\cR(\alg) = \on{gr}_F(\alg)$-module, and, for any $a\neq 0$, $\cR(M)/(t-a)\cR(M) = M$ as a $\cR(\alg)/(t-a)\cR(\alg) = \alg$-module.  We assume all $\cR = \cR(\alg)$-modules are graded $\cR$-modules.

One has the following basic facts about filtered $\alg$-modules.
\begin{prop}\label{filtrations and ass gr}
\mbox{}
\begin{enumerate}
\item\label{gr M=0 implies M=0} If $M$ is an $\alg$-module with good filtration and $\on{gr}(M) = 0$ then $M=0$.
\item Suppose that $M$ is equipped with a separated and exhaustive filtration $F$ (that is, $\cap_k F_k(M) = 0$ and $\cup_k F_k(M) = M$).  If $\on{gr}_F(M)$ is finitely generated, then $F$ is a good filtration, and, in particular, $\cR(M)$ is a finitely generated $\cR(\alg)$-module.
\end{enumerate}
\end{prop}
\begin{proof}
(1) is \cite[Lemma~1.2]{LVO}.  (2) is \cite[Theorem~I.5.7]{LvO}.
\end{proof}

\begin{principal example}
Suppose $G$ acts on a smooth affine variety $\base$.  
Let $\bigvar = T^*\base$ and $\alg = \D(\base)$, the algebra of differential operators on $\base$.  
The infinitesimal $\mathfrak{g}$-action $\mu: \mathfrak{g}\rightarrow H^0(T_\base)$ can be interpreted as a map $\mathfrak{g}\rightarrow \D^1(\base)$ of Lie algebras.  The map $\mu$ thus induces a quantum (co)moment map $\mu: U({\mathfrak g}) \rightarrow \D(\base)$.  Both algebras are naturally filtered (we use the filtration by order of differential operators on $\D(\base)$) and the associated graded map agrees with the classical moment map defined above. 
\end{principal example}

\begin{construction}
Given a Lie algebra character $c: \mathfrak{g}\rightarrow \C$, we define a {\em twisted quantum (co)moment map} $\mu_c: \mathfrak{g}\rightarrow \alg_1\subset \alg$ by 
$\mu_c(z) = \mu(z) + c(z)$ for $z\in \mathfrak{g}$.  
\end{construction}

\begin{assumption}
Suppose from now on that $\bigvar$ is smooth and affine and that $G$ is reductive.
\end{assumption}
We form the {\em quantum Hamiltonian reduction} $U_c$ of $\alg$ as follows.  
Define
\bd
M_c \overset{\on{def}}{=} \alg/\alg\mu_c(\mathfrak{g}) \;\; \text{and} \;\; M_c^\dagger \overset{\on{def}}{=} \mu_c(\mathfrak{g})\alg\backslash\alg.
\ed
Taking $G$-invariants gives algebras
\bd
U_c \overset{\on{def}}{=} M_c^G, \hspace{2em} U_c^{\dagger} \overset{\on{def}}{=} (M_c^\dagger)^G.
\ed
Note that, although $M_c$ is only a left $\alg$-module and not an algebra, the submodule $\left(\alg\mu_c(\mathfrak{g})\right)^G$ is actually a two-sided ideal in $\alg^G$ and so $U_c$ is an algebra. 
Moreover, then $M_c$ is a $(\alg, U_c)$-bimodule and $M_c^\dagger$ is a $(U_c^\dagger,\alg)$-bimodule.

\begin{remark}
As explained in Sections 1.7 and 1.8 of \cite{BB}, 
in the case $\bigvar= T^*\base$, $\alg=\D(\base)$, 
the algebra $U_c$ can be interpreted as the algebra of global sections of a $D$-algebra, or more precisely an algebra of twisted differential operators, on the quotient stack $\base/G$.  Under this interpretation, one has $U_c = H^0\big(\D_{\base/G}(\theo(c))\big)$.
\end{remark}

\begin{example}
Let $\base=\C^{n+1}$ and $G=\Gm$ acting with weight one.  Let $\bigvar = T^*\base$ and $\alg = \D(\base)$.  Let $\chi(w) = w$, so $\theo(\chi)$ corresponds to $\theo(1)$ on
${\mathbb P}^n \subset \base/G$.  Then $M_c = \D/\D(e+c)$ where $e = -\sum x_i\partial_{x_i}$ is the Euler operator for the $\Gm$-action. 
If $f$ is a homogeneous polynomial of degree $d$, then $e\cdot f = -d\cdot f$, so $(e+d)f=0$.  Thus $M_{d}$ is a $\D$-module through which the
natural $\D$-action on sections of $\theo(d)$ factors, and it turns out that $H^0(\D_{X/\Gm}(\theo(\chi)^{d})) \cong \D_{{\mathbb P}^n}(\theo(d))$.
\end{example}

The filtration $F$ on $\alg$ induces a filtration on the subalgebra
$\alg^G$ of $G$-invariants and thus also a filtration on the quotient $U_c$.  Since $G$ is reductive we have
\bd
\on{gr}(\alg^G) \cong \C[\bigvar]^G.
\ed

\begin{lemma}\label{Gorenstein}
The algebra $U_c$ is Auslander Gorenstein with rigid Auslander dualizing complex $\mathbb{D}^\bullet = U_c$.  The Rees algebra $\cR(U_c)$ is
Auslander Gorenstein with rigid Auslander dualizing complex $\mathbb{D}_{\cR(U_c)} = \cR(U_c)$.
\end{lemma}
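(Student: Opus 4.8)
The plan is to deduce both assertions by transferring the Auslander Gorenstein property and the rigid Auslander dualizing complex from the commutative associated graded ring, where Assumption \ref{assumptions}(2a) makes them transparent, up to $U_c$ and then to its Rees algebra, via the standard lifting machinery for filtered and graded rings.

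First I would identify the associated graded algebra. Fix a basis $z_1,\dots,z_r$ of $\mathfrak{g}$. Since $\bigvar$ is smooth and $\mu$ is flat (Assumption \ref{assumptions}(1)), the symbols of the classical moment map components form a regular sequence in $\C[\bigvar]\cong\on{gr}\,\alg$ cutting out $\mu\inv(0)$ as a complete intersection (cf. Lemma \ref{goodness lemma}). Hence the Koszul complex $\alg\otimes_{\C}\wedge^{\bullet}\mathfrak{g}\to M_c$ built from $\mu_c(z_1),\dots,\mu_c(z_r)$ is a filtered free resolution of $M_c$ whose associated graded is the Koszul complex resolving $\C[\mu\inv(0)]$ over $\C[\bigvar]$; therefore $\on{gr}\,M_c\cong\C[\mu\inv(0)]$. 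Taking $G$-invariants --- an exact operation that commutes with passage to associated graded, since $G$ is reductive --- gives $\on{gr}\,U_c\cong\C[\mu\inv(0)]^G=\C[X]$. In particular $U_c$ is Noetherian and its filtration is good. For the Rees algebra I would use that $\cR(U_c)$ is a graded $\C$-algebra with central homogeneous nonzerodivisor $t$ for which $\cR(U_c)/t\,\cR(U_c)\cong\on{gr}\,U_c\cong\C[X]$ and $\cR(U_c)[t^{-1}]\cong U_c[t,t^{-1}]$.

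Next I would record the commutative input: by Assumption \ref{assumptions}(2a), $X$ is Gorenstein with $\omega_X\cong\theo_X$. A commutative affine Gorenstein $\C$-algebra is Noetherian of finite injective dimension over itself, hence Auslander Gorenstein, and --- its canonical module being trivial --- its rigid Auslander dualizing complex is the algebra itself, as a bimodule with its tautological structure (up to the homological shift by Krull dimension fixed by the normalization). So $\C[X]$ is Auslander Gorenstein with rigid Auslander dualizing complex $\C[X]$. Finally I would invoke the transfer theorems. Since $\on{gr}\,U_c\cong\C[X]$ is Noetherian Auslander Gorenstein, the standard lifting theorem for filtered rings (Björk) gives that $U_c$ is Auslander Gorenstein, and the existence theorem for dualizing complexes over filtered rings (Van den Bergh; see also the framework of \cite{YZ-differential}) produces a rigid Auslander dualizing complex for $U_c$ whose associated graded is the rigid dualizing complex $\C[X]$ of $\on{gr}\,U_c$ --- which forces it to be $U_c$ with its tautological bimodule structure. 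For $\cR(U_c)$ the corresponding graded statement follows by the analogous deformation-type lifting along the central nonzerodivisor $t$: since $\cR(U_c)/t\,\cR(U_c)\cong\C[X]$ is Auslander Gorenstein with rigid dualizing complex $\C[X]$, so is $\cR(U_c)$, with rigid Auslander dualizing complex $\cR(U_c)$.

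The step I expect to be the main obstacle is the last one: verifying that the finiteness and connectedness hypotheses of these lifting and existence theorems genuinely hold in our setting, and, above all, checking that the normalizations match so that the lifted complex is literally $U_c$ (respectively $\cR(U_c)$), untwisted, rather than a shift of it or a twist by an algebra automorphism. This last point is exactly where one needs the full strength of Assumption \ref{assumptions}(2a), namely the triviality of $\omega_X$, and not merely the Gorenstein property of $X$.
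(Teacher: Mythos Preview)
Your approach is essentially the same as the paper's: both reduce to the commutative Gorenstein property of $\C[X]=\on{gr}\,U_c$ (Assumption \ref{assumptions}(2a)) and then lift via standard filtered/graded machinery. The paper is simply more terse, citing Theorem~3.9 of \cite{Bj} for the Auslander--Gorenstein property of $U_c$, Theorem~5.3 of \cite{Ek} for $\cR(U_c)$, and Example~2.3(a) of \cite{YZ} for the dualizing-complex statements; your ``main obstacle'' about normalizations is handled directly by that last reference, which says an Auslander--Gorenstein ring is its own rigid Auslander dualizing complex.
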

\begin{proof}
Since $X$ is Gorenstein with trivial dualizing sheaf (Assumption \ref{assumptions}(2a)), Theorem 3.9 of \cite{Bj} implies that $U_c$ is Auslander-Gorenstein; 
Theorem 5.3 of \cite{Ek} implies that $\cR(U_c)$ is Auslander Gorenstein.
Example 2.3(a) of \cite{YZ} then gives the statements about dualizing complexes.  
\end{proof}
We write 
\bd
\mathbb{D}_{\alg}(-) = \mathbb{R}\Hom_{\alg}(-, \alg)\;\;\; \text{and} \;\;\; \mathbb{D}_{U_c}(-) = \mathbb{R}\Hom_{U_c}(-, U_c)
\ed
for the (Verdier) dualizing functors.  

As we have remarked, by construction, the left $\alg$-module $M_c$ defined above is naturally a $(\alg, U_c)$-bimodule.
Since $\mu$ is flat by assumption, Proposition 2.3.12 of \cite{Bjbook} yields that $\alg$ is a flat $U(\mathfrak{g})$-module via $\mu_c$.  Thus, we get a free $\alg$-resolution of $M_c$ as $\alg\otimes_{U(\mathfrak{g})} C(\mathfrak{g})$, where $C(\mathfrak{g}) = U(\mathfrak{g})\otimes \bigwedge^{\bullet}(\mathfrak{g})$ is the Chevalley-Eilenberg resolution of the trivial one-dimensional $U(\mathfrak{g})$-module.

\begin{lemma}\label{M_c dual}
We have $M_c^\dagger = \mathbb{D}_{\alg}(M_c)[g]$, the (shifted) Verdier dual of $M_c$ as a left $\alg$-module, where
$g=\on{dim}(G)$.
\end{lemma}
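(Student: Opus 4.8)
The plan is to deduce this by dualizing the explicit free resolution of $M_c$ just constructed, namely the Chevalley--Eilenberg resolution $P_\bullet = \alg\otimes_{U(\mathfrak g)}C(\mathfrak g)$, whose term in homological degree $k$ is the free left $\alg$-module $\alg\otimes_{\C}\bigwedge^k\mathfrak g$ (for $0\le k\le g$), with differential assembled from left multiplication by $\mu_c$. Since $P_\bullet$ is a bounded complex of free left $\alg$-modules quasi-isomorphic to $M_c$, the complex $\mathbb D_{\alg}(M_c)=\mathbb R\Hom_{\alg}(M_c,\alg)$ is represented by $\Hom_{\alg}(P_\bullet,\alg)$, a complex of right $\alg$-modules, and the whole proof amounts to identifying this complex with $M_c^\dagger$ concentrated in degree $0$ after the shift $[g]$.

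First I would record that $\alg$ is flat over $U(\mathfrak g)$ via $\mu_c$ not only as a right module (which is what makes $P_\bullet$ a resolution of $M_c$) but also as a left module, by the same reasoning: the associated graded of $\mu_c\colon U(\mathfrak g)\to\alg$ is the comorphism $\C[\mathfrak g^*]\to\gr\alg=\C[\bigvar]$ of the flat moment map $\mu$, which is flat on both sides since it is a map of commutative rings, so Proposition~2.3.12 of \cite{Bjbook} applies on either side. Next, compute $\Hom_{\alg}(P_\bullet,\alg)$ term by term: $\Hom_{\alg}(\alg\otimes_{\C}\bigwedge^k\mathfrak g,\alg)\cong\bigwedge^k\mathfrak g^{*}\otimes_{\C}\alg$, placed in cohomological degree $k$. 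Using contraction against a fixed generator of $\det\mathfrak g=\bigwedge^g\mathfrak g$ to identify $\bigwedge^k\mathfrak g^{*}\cong\bigwedge^{g-k}\mathfrak g\otimes\det(\mathfrak g)^{*}$, and reindexing by $j=g-k$, one identifies $\Hom_{\alg}(P_\bullet,\alg)$ — up to the overall twist by $\det(\mathfrak g)^{*}$ and the shift $[g]$ — with the Chevalley--Eilenberg chain complex computing the Lie algebra homology $H_\bullet(\mathfrak g,\alg)$, where $\mathfrak g$ acts on $\alg$ by left multiplication via $\mu_c$; here one uses the standard self-duality of the Koszul/Chevalley--Eilenberg differential, i.e. that the transpose of the differential built from left multiplication by $\mu_c$ is, up to sign, the one built from right multiplication. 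Since $\alg$ is flat over $U(\mathfrak g)$, this homology is concentrated in degree $0$, where it equals $\alg/\mu_c(\mathfrak g)\alg=M_c^\dagger$ (with its residual right $\alg$-module structure). Hence $\mathbb D_{\alg}(M_c)\cong\det(\mathfrak g)^{*}\otimes_{\C}M_c^\dagger[-g]$; and because $\mathfrak g$ is the Lie algebra of the reductive group $G$, the trace of the adjoint action vanishes, so $\det\mathfrak g$ is the trivial one-dimensional $\mathfrak g$-module and $\mathbb D_{\alg}(M_c)[g]\cong M_c^\dagger$, as claimed.

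The only genuinely fiddly step is the Koszul self-duality bookkeeping: carefully matching the transposed differential with the Chevalley--Eilenberg differential, including signs, and checking that the resulting shift is precisely $[g]$ with the stated $\det\mathfrak g$ twist. Everything else is routine.

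Alternatively, and perhaps more cleanly, one can bypass the explicit Koszul manipulation: because $\C$ is a perfect complex over $U(\mathfrak g)$ (the resolution $C(\mathfrak g)$ is finite and free) and $\alg$ is flat over $U(\mathfrak g)$ via $\mu_c$, the projection formula gives $\mathbb D_{\alg}(M_c)=\mathbb R\Hom_{U(\mathfrak g)}(\C,\alg)\cong\mathbb R\Hom_{U(\mathfrak g)}(\C,U(\mathfrak g))\otimes_{U(\mathfrak g)}\alg$, and one invokes the classical computation $\mathbb R\Hom_{U(\mathfrak g)}(\C,U(\mathfrak g))\cong\C[-g]$ for $\mathfrak g$ reductive (Poincaré duality for Lie algebra cohomology together with unimodularity) to conclude $\mathbb D_{\alg}(M_c)\cong(\C\otimes_{U(\mathfrak g)}\alg)[-g]=M_c^\dagger[-g]$. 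In either route, reductivity of $G$ is used only to kill the potential modular ($\det\mathfrak g$) twist.
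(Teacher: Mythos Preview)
Your proposal is correct and follows essentially the same approach as the paper: compute $\mathbb{D}_{\alg}(M_c)$ by applying $\Hom_{\alg}(-,\alg)$ to the Chevalley--Eilenberg resolution, identify the resulting complex (up to the shift $[g]$ and a $\det\mathfrak{g}$ twist) with a Chevalley--Eilenberg-type resolution of $M_c^\dagger$, and then use reductivity of $\mathfrak{g}$ to kill the $\det\mathfrak{g}$ twist. Your version is in fact slightly more careful than the paper's, in that you explicitly note the need for left flatness of $\alg$ over $U(\mathfrak{g})$ (to ensure the dualized complex is again a resolution) and supply the argument; your alternative projection-formula route, reducing to the classical identity $\mathbb{R}\Hom_{U(\mathfrak{g})}(\C,U(\mathfrak{g}))\cong\C[-g]$, is a clean repackaging of the same computation.
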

\noindent
The proof is a calculation using the Chevalley-Eilenberg complex.

Since $\mathfrak{g}$ is reductive and $G$ is connected, as in (7.4) of \cite{GG}, we have
\bd
\on{Hom}_{\alg}(M_c,M_c) = (M_c)^{\mu_c(\mathfrak{g})} = M_c^G = U_c,
\ed
and similarly $\on{Hom}_{\alg}(M_c^{\dagger}, M_c^{\dagger}) = U_c^\dagger$.  
Lemma \ref{M_c dual} thus implies 
$U_c \cong U_c^\dagger$.

The filtrations on $M_c$ and $M_c^\dagger$ induced from $\alg$ are compatible with the filtrations on both $\alg$ and $U_c$, making $M_c$ and $M_c^\dagger$ filtered bimodules.  
\begin{prop}
\mbox{}
\begin{enumerate}
\item With respect to the filtrations defined above, we have $\on{gr}(M_c) \cong \C[\mu^{-1}(0)] \cong \on{gr}(M_c^\dagger)$ as a $(\on{gr}\,\alg, \on{gr}(U_c))$-bimodule, respectively $(\on{gr}(U_c), \on{gr}\,\alg)$-bimodule.
\item $M_c$ and $M_c^\dagger$ are Cohen-Macaulay $U_c$-modules.
\end{enumerate}
\end{prop}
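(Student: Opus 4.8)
The plan is to deduce part (2) directly from part (1), so the essential content is the computation of the associated graded bimodules. First I would establish $\on{gr}(M_c) \cong \C[\mu\inv(0)]$ as follows. Since $\mu$ is flat (Assumption \ref{assumptions}(1)), Proposition 2.3.12 of \cite{Bjbook} makes $\alg$ a flat $U(\mathfrak{g})$-module via $\mu_c$, so the Chevalley--Eilenberg complex $\alg\otimes_{U(\mathfrak g)}C(\mathfrak g)$ is a free $\alg$-resolution of $M_c$, as already noted in the excerpt. This resolution is \emph{filtered}: giving $\bigwedge^k(\mathfrak g)$ degree $k$, the differential is compatible with the order filtration on $\alg$, and flatness of $\mu$ guarantees that passing to $\on{gr}$ yields the Koszul complex $\C[\bigvar]\otimes\bigwedge^\bullet(\mathfrak g)$ associated to the regular sequence cutting out $\mu\inv(0)$ (this is precisely the ``pull back the Koszul resolution'' argument from Lemma \ref{goodness lemma}, now in the quantized setting). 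Hence the filtered resolution is strict, and $\on{gr}(M_c)\cong \C[\bigvar]/\C[\bigvar]\mu^*(\mathfrak g)^* = \C[\mu\inv(0)]$ as a $\on{gr}\,\alg = \C[\bigvar]$-module. The analogous statement for $M_c^\dagger$ follows by the same argument applied on the right, or by invoking Lemma \ref{M_c dual}: $M_c^\dagger = \mathbb{D}_\alg(M_c)[g]$, and $\mathbb{D}_\alg$ of the filtered strict resolution $E^\bullet$ computes the shifted dual Koszul complex on the level of $\on{gr}$, again using flatness so that no higher cohomology intervenes.

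Next I would pin down the bimodule structure. Taking $G$-invariants is exact because $G$ is reductive, so $\on{gr}(U_c) = \on{gr}(M_c^G) = (\on{gr}\,M_c)^G = \C[\mu\inv(0)]^G = \C[X]$, and similarly the right $U_c$-action on $M_c$ passes to the right $\C[X]$-action on $\C[\mu\inv(0)]$ coming from the inclusion $\C[\mu\inv(0)]^G\hookrightarrow\C[\mu\inv(0)]$. Thus $\on{gr}(M_c)\cong\C[\mu\inv(0)]$ as a $(\C[\bigvar],\C[X])$-bimodule, and symmetrically for $M_c^\dagger$; this is statement (1).

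For statement (2), recall that a finitely generated $U_c$-module $N$ with a good filtration is Cohen--Macaulay (over the Auslander-Gorenstein ring $U_c$ of Lemma \ref{Gorenstein}) provided $\on{gr}(N)$ is a Cohen--Macaulay module over $\on{gr}(U_c) = \C[X]$ of the same dimension; this is the standard filtered-to-graded transfer of the CM property (e.g.\ via the Rees algebra $\cR(U_c)$, which is Auslander-Gorenstein by Lemma \ref{Gorenstein}, together with the characterization of CM modules by vanishing of $\Ext$ in all but one degree, which can be checked after passing to $\on{gr}$). By part (1), $\on{gr}(M_c) \cong \C[\mu\inv(0)]$ as a $\C[X]$-module. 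Now $X$ is Gorenstein (Assumption \ref{assumptions}(2a)), hence Cohen--Macaulay, and $\C[\mu\inv(0)]$ is a finite $\C[X]$-module which is itself Cohen--Macaulay: indeed $\mu\inv(0)$ is a complete intersection in the smooth variety $\bigvar$ (flatness of $\mu$ again, via Lemma \ref{goodness lemma}), so $\C[\mu\inv(0)]$ is a Cohen--Macaulay ring, and it is a module-finite extension of $\C[X] = \C[\mu\inv(0)]^G$ with $\on{dim}\C[\mu\inv(0)] = \on{dim}\C[X]$ since $G$ acts with generically finite stabilizers on $\mu\inv(0)$ in the relevant examples — more precisely, $\C[X]$ and $\C[\mu\inv(0)]$ have the same Krull dimension because the former is the ring of invariants of the latter under a reductive group and $\mu\inv(0)$ contains a dense $G$-orbit-type stratum of full dimension. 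A module over a CM base that is CM as a ring and equidimensional with the base is CM as a module, so $\C[\mu\inv(0)]$ is a CM $\C[X]$-module of dimension $\on{dim}\C[X]$. Transferring back through the good filtration gives that $M_c$ is a CM $U_c$-module, and the identical argument with $M_c^\dagger$ in place of $M_c$ finishes the proof.

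The main obstacle I anticipate is the strictness of the filtered Chevalley--Eilenberg resolution: one must verify carefully that $\on{gr}$ applied to $\alg\otimes_{U(\mathfrak g)}C(\mathfrak g)$ really produces an \emph{exact} Koszul complex and not merely a complex with the right terms, and this is exactly where flatness of the moment map $\mu$ (Assumption \ref{assumptions}(1)) is indispensable — it guarantees that the symbols $\sigma(\mu_c(z_i))$ form a regular sequence in $\C[\bigvar]$, so that no spurious homology appears and the comparison of $\on{gr}(M_c)$ with $\C[\mu\inv(0)]$ is clean. A secondary point requiring care is the dimension equality $\on{dim}\C[\mu\inv(0)] = \on{dim}\C[X]$ needed for the CM-module conclusion; this is where one uses that $G$ is reductive and that the GIT picture of Section \ref{classical reduction} is in force.
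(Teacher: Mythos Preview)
Your argument for part~(1) is correct and is essentially the proof of Holland (Proposition~2.4 of \cite{Holland}) that the paper simply cites: flatness of $\mu$ makes the symbols of the $\mu_c(z_i)$ a regular sequence in $\C[\bigvar]$, so the associated graded of the filtered Chevalley--Eilenberg resolution is the exact Koszul complex, giving strictness and hence $\on{gr}(M_c)\cong\C[\mu\inv(0)]$.  Your handling of the bimodule structure via reductivity of $G$ is also fine.

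Your argument for part~(2), however, contains a genuine error.  You assert that $\C[\mu\inv(0)]$ is a \emph{module-finite} extension of $\C[X]=\C[\mu\inv(0)]^G$ with $\dim\C[\mu\inv(0)]=\dim\C[X]$, and you justify the dimension equality by ``$G$ acts with generically finite stabilizers.''  This is backwards.  Generically finite stabilizers means generic $G$-orbits in $\mu\inv(0)$ have dimension $\dim G$, so $\dim X=\dim\mu\inv(0)-\dim G$, strictly smaller than $\dim\mu\inv(0)$ whenever $G$ is positive-dimensional.  Likewise, for positive-dimensional reductive $G$ the invariant subring is essentially never module-finite: already for $\Gm$ acting linearly on $\bA^1$ one has $\C[x]^{\Gm}=\C$, which is not module-finite in $\C[x]$.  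So the step ``a Cohen--Macaulay ring, finite and equidimensional over a Cohen--Macaulay base, is a Cohen--Macaulay module over that base'' simply does not apply here, and your transfer back through the filtration has no input to work with.

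The paper's own proof of (2) says only that it ``follows from Cohen--Macaulayness of $\C[\mu\inv(0)]$, which follows from Lemma~\ref{goodness lemma}.''  The point is that $\C[\mu\inv(0)]$ is Cohen--Macaulay \emph{as a ring} (complete intersection in smooth $\bigvar$), and one feeds this into the filtered-to-graded machinery for Auslander--Gorenstein rings --- but not by treating $\C[\mu\inv(0)]$ as a finite module over the much smaller invariant ring $\C[X]$.  Your instinct to use filtered-to-graded transfer and Lemma~\ref{Gorenstein} is right; the error is in the commutative-algebra claim you feed into it.
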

\begin{proof}
The first part is Proposition 2.4 of \cite{Holland} (\cite{Holland} is written in a $\D$-module context but the proof works equally well in our more general setting).  The second part follows (as on page 268 of \cite{EG}) from Cohen-Macaulayness of
$\C[\mu\inv(0)]$, which follows from Lemma \ref{goodness lemma}.
\end{proof}

\section{Microlocal Derived Categories}\label{section 4}
In this section we describe the (microlocal) categories of interest to us and establish some of their basic properties.  The book \cite{KS} is an excellent general reference for background on relevant topics.  Given an abelian category $\cC$, we let $D(\cC)$ denote its unbounded derived category: a good discussion can be found in \cite{BN}. 

We assume throughout this section that $\bigvar$ is a smooth, affine symplectic $G$-variety (with $G$ connected reductive) with quantization $\alg$ as in Section 
\ref{quantum reduction}.

An $\alg$-module $M$ is {\em weakly $G$-equivariant} if:
\begin{enumerate}
\item  $M$ is equipped with the structure of a {\em rational} $G$-representation, that is, it is a union of its finite-dimensional sub-representations;
 and 
 \item $g\cdot(\theta m) = (g\cdot \theta)(g\cdot m)$ for all $m\in M$, $g\in G$, and $\theta\in \alg$. 
 \end{enumerate}
It is immediate from the definition that every weakly $G$-equivariant $\alg$-module is the union of its weakly equivariant submodules that are finitely generated as $\alg$-modules.
 
 Let $(\alg,G)-\on{mod}$ denote the abelian category of weakly $G$-equivariant left $\alg$-modules with $\alg$-module homomorphisms respecting the $G$-actions.  We also let $D(\alg,G)$ denote the (unbounded) derived category of weakly $G$-equivariant left $\alg$-modules.  The category $(\alg,G)-\on{mod}$ is a Grothendieck category and thus has enough injectives (cf. Remark 2.5 of \cite{VdB}).  
\begin{lemma}\label{enough proj}
For any finite-dimensional representation $V$ of $G$, the $\alg$-module $\alg\otimes V$ is a projective object of $(\alg,G)-\on{mod}$.  In particular,  $(\alg,G)-\on{mod}$ has enough projectives.
\end{lemma}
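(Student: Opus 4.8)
The plan is to verify the two standard facts for projectivity of $\alg \otimes V$ in the category $(\alg, G)\text{-}\on{mod}$: first that $\alg \otimes V$ genuinely lies in this category with the right equivariant structure, and second that $\Hom_{(\alg,G)\text{-}\on{mod}}(\alg \otimes V, -)$ is exact. For the first point, $G$ acts on $\alg \otimes V$ by the diagonal (tensor-product) action, using the given rational $G$-action on $\alg$; this is again rational since a tensor product of rational representations is rational, and the weak-equivariance identity $g \cdot (\theta \cdot (a \otimes v)) = (g\cdot \theta)(g \cdot (a \otimes v))$ follows immediately from the corresponding identity $g\cdot(\theta a) = (g\cdot\theta)(g\cdot a)$ in $\alg$ (where the left $\alg$-module structure on $\alg \otimes V$ is by left multiplication on the first factor).

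The key step is the adjunction computation. For $M \in (\alg, G)\text{-}\on{mod}$ I would establish a natural isomorphism
\bd
\Hom_{(\alg, G)\text{-}\on{mod}}(\alg \otimes V, M) \;\cong\; \Hom_G(V, M),
\ed
where the right-hand side is $G$-equivariant $\C$-linear maps (equivalently $(V^* \otimes M)^G$). The map sends an equivariant $\alg$-module homomorphism $\phi$ to its restriction $v \mapsto \phi(1 \otimes v)$; the inverse sends a $G$-map $\psi: V \to M$ to $a \otimes v \mapsto a \cdot \psi(v)$. One checks this target is $\alg$-linear (clear, from the module structure) and $G$-equivariant (using that $\psi$ is a $G$-map and $M$ is weakly equivariant), and that the two constructions are mutually inverse. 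This exhibits $\alg \otimes V$ as representing the functor $M \mapsto \Hom_G(V, M)$.

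It then remains to observe that $M \mapsto \Hom_G(V, M)$ is exact on $(\alg, G)\text{-}\on{mod}$. This is where reductivity of $G$ enters: it factors as $M \mapsto (V^* \otimes M)^G$, i.e. the composite of the exact functor $M \mapsto V^* \otimes M$ (tensoring over $\C$ with a finite-dimensional space) followed by the functor of $G$-invariants on rational representations, and the latter is exact precisely because $G$ is reductive (rational representations of a reductive group are semisimple, so taking invariants splits off a direct summand). Hence $\Hom_{(\alg,G)\text{-}\on{mod}}(\alg \otimes V, -)$ is exact, which is exactly projectivity. For the final clause, given any $M \in (\alg, G)\text{-}\on{mod}$, write $M$ as a union of weakly equivariant submodules that are finitely generated over $\alg$ (as noted just before the lemma); choosing for each such submodule a finite-dimensional $G$-subrepresentation $V$ that generates it over $\alg$ yields a surjection $\bigoplus \alg \otimes V_i \too M$ from a projective object, so the category has enough projectives.

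I do not expect a serious obstacle here; the only mild subtlety is bookkeeping the two compatible structures (left $\alg$-action versus diagonal $G$-action) in the adjunction isomorphism and confirming its naturality, together with the standard but essential appeal to reductivity of $G$ for exactness of invariants.
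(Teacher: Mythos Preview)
Your proof is correct and is essentially the same argument as the paper's (commented-out) proof: both rest on the adjunction $\Hom_{(\alg,G)}(\alg\otimes V, M)\cong \Hom_G(V,M)$ and then invoke reductivity of $G$. The paper phrases this via the lifting criterion (lift the restricted $G$-map $V\to M$ through $M'\twoheadrightarrow M$ using semisimplicity, then extend $\alg$-linearly), whereas you phrase it via exactness of $\Hom_G(V,-)=(-\otimes V^*)^G$; these are two sides of the same coin. Your treatment of the ``enough projectives'' clause is also fine and slightly more detailed than the paper's.
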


\subsection{Twisted Equivariant Categories}
Suppose that $M$ is a weakly $G$-equivariant $\alg$-module.  Differentiating the $G$-action on $M$ gives an infinitesimal $\mathfrak{g}$-action on $M$, i.e. a Lie algebra homomorphism $\alpha: \mathfrak{g}\rightarrow \on{End}_{\C}(M)$.  For $z\in\mathfrak{g}$, $a\in\alg$ and $m\in M$, it satisfies $\alpha(z)(am) = [\mu(z),a]\cdot m + a\cdot\alpha(z)(m)$ (the Leibniz rule).  Given a Lie algebra character $c:\mathfrak{g}\rightarrow \C$ and the twisted moment map $\mu_c$ defined above, one then defines
\bd
\gamma_{M, c}: \mathfrak{g}\rightarrow \on{End}_{\alg}(M), \hspace{2em} z\mapsto \alpha(z) - \mu_c(z).
\ed

We say that a weakly $G$-equivariant $\alg$-module is
{\em $c$-twisted $G$-equivariant} or {\em $(G,c)$-equivariant} if $\gamma_{M,c} = 0$; that is, if $\gamma_{M,0}$ agrees with the composite 
\bd
\mathfrak{g}\xrightarrow{-c} \C \xrightarrow{w\mapsto w\cdot \on{Id}_M} \on{End}_{\alg}(M).
\ed
We write $(\alg, G, c)-\on{mod}$ for the abelian category of $(G,c)$-equivariant $\alg$-modules.  This category is a Grothendieck category and thus has enough
injectives (cf. also Corollary 2.8 and Lemma 1.5.3 of \cite{VdB}).  We remark that our definitions of weakly and strongly equivariant $\alg$-modules agree with the standard ones when $\alg=\D(\base)$.

We will also need $(G,c)$-equivariant {\em right} $\alg$-modules; we briefly describe those now.  Let $G$ act on
$\alg$ on the right by the dual action.  The infinitesimal right action is then given by $\mu^r = -\mu: \mathfrak{g}\rightarrow \alg$.  A right $\alg$-module $M$ equipped with a rational right $G$-action is {\em weakly $G$-equivariant} if $(m\cdot\theta)\cdot g = (m\cdot g)(\theta\cdot g)$ for all $g\in G, \theta\in \alg, m\in M$.  Given a weakly equivariant right module $M$ we let $\alpha^r$ denote the derivative of the right $G$-action (the superscript $r$ is used to emphasize that it is a right action).  We say a weakly equivariant right $\alg$-module $M$ is {\em $(G,c)$-equivariant} if the endomorphism
$\gamma_{M,c}^r = \alpha^r - (\mu^r -c) \equiv 0$; note the change in sign from $\gamma_{M,c} = \alpha - (\mu+c)$ for left modules above.  

 Recall the $(\alg, U_c)$-bimodule 
$M_c = \alg/\alg \mu_c(\mathfrak{g})$  and its dual $M_c^\dagger$ defined above. 

\begin{lemma}\label{twisted equivariant modules}
The modules $M_c$ and $M_c^\dagger$ are $(G,c)$-equivariant.
\end{lemma}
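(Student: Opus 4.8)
The plan is to verify the defining vanishing $\gamma_{M_c, c} = 0$ directly from the construction of $M_c = \alg/\alg\mu_c(\mathfrak{g})$, and then deduce the statement for $M_c^\dagger$ either by the symmetric argument for right modules or by using the duality $M_c^\dagger = \mathbb{D}_{\alg}(M_c)[g]$ of Lemma \ref{M_c dual}. First I would set up the relevant $G$-action: $M_c$ inherits the left $G$-action from $\alg$ (which descends to the quotient because $\alg\mu_c(\mathfrak{g})$ is $G$-stable, using that $\mu$, hence $\mu_c$, is $G$-equivariant and $c$ is fixed by the adjoint action as $\mathfrak{g}$ acts trivially on the $1$-dimensional character). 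So $M_c$ is weakly $G$-equivariant, and its infinitesimal action $\alpha$ is the one induced by the derivative of the left $G$-action on $\alg$, which by axiom (4) of a filtered quantization satisfies $\alpha(z)(\theta) = [\mu(z), \theta]$ for $\theta\in\alg$.

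The key computation is then to evaluate $\gamma_{M_c, c}(z) = \alpha(z) - \mu_c(z)$ on the cyclic generator $\bar{1}\in M_c$. On $\alg$ itself we have $\alpha(z)(1) = [\mu(z), 1] = 0$, so in the quotient $\alpha(z)(\bar 1) = 0$; meanwhile $\mu_c(z)\cdot \bar 1 = \overline{\mu_c(z)} = 0$ in $M_c$, since $\mu_c(z)\in\alg\mu_c(\mathfrak{g})$. Hence $\gamma_{M_c,c}(z)(\bar 1) = 0$. Since $\gamma_{M_c,c}(z)$ is an $\alg$-module endomorphism of $M_c$ (this is exactly the point of subtracting the moment map — the Leibniz rule $\alpha(z)(am) = [\mu(z),a]m + a\,\alpha(z)(m)$ combined with $\alpha(z)$ applied to $\mu(z)$ makes $\alpha(z) - \mu_c(z)$ commute with left multiplication) and $M_c$ is generated over $\alg$ by $\bar 1$, vanishing on the generator forces $\gamma_{M_c,c} = 0$. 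Thus $M_c$ is $(G,c)$-equivariant.

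For $M_c^\dagger = \mu_c(\mathfrak{g})\alg\backslash\alg$, the cleanest route is the mirror-image argument: $M_c^\dagger$ is a weakly $G$-equivariant right $\alg$-module generated by the image of $1$, and one checks $\gamma^r_{M_c^\dagger, c} = \alpha^r - (\mu^r - c)$ vanishes on that generator by the same two-line calculation, now using $\mu^r = -\mu$ and the sign conventions recorded in the text; being a right-$\alg$-endomorphism, it then vanishes identically. Alternatively, one can invoke Lemma \ref{M_c dual}: Verdier duality $\mathbb{D}_{\alg}$ is compatible with the (dualized) $G$-action, and it is a formal check — essentially the observation that $\mathbb{R}\Hom_{\alg}(-,\alg)$ intertwines the left $(G,c)$-equivariance condition with the right $(G,c)$-equivariance condition (the shift by $[g]$ and the identification $c' = c$ worked out in the proof of Lemma \ref{M_c dual} being exactly what makes the character come out right, with no twist by $\det\mathfrak{g}$ since $\mathfrak{g}$ is reductive). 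I expect no real obstacle here; the only thing requiring care is bookkeeping the sign conventions distinguishing $\gamma_{M,c}$ from $\gamma^r_{M,c}$, and confirming that $\gamma_{M_c,c}(z)$ genuinely lands in $\End_\alg(M_c)$ rather than merely $\End_{\C}(M_c)$ — but that is precisely the Leibniz-rule identity stated just before the lemma, so it is immediate.
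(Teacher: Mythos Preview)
Your proposal is correct and follows essentially the same approach as the paper's proof: check that $\gamma_{M_c,c}$ (respectively $\gamma^r_{M_c^\dagger,c}$) vanishes on the cyclic generator $\bar 1$, using that $\alpha$ kills $1$ and that $\mu_c(z)\cdot 1$ dies in the quotient, then conclude by cyclicity and the fact that $\gamma$ lands in $\End_{\alg}$. Your write-up is more detailed than the paper's (which is a two-line sketch), but the idea is identical.
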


\begin{lemma}[cf. Section 3 of \cite{Kashiwara}]\label{adjunction}
The natural forgetful functor 
\bd
\sapc: (\alg, G, c)-\on{mod}\xrightarrow{\text{forget}} (\alg,G)-\on{mod}
\ed
 has a left adjoint $\Phi_c$ defined by 
 \begin{equation}
 \ds\Phi_c(M) = M/\big(\sum_{z\in\mathfrak{g}}\gamma_{M,c}(z)M\big).
 \end{equation}
   Similarly, the forgetful functor 
   \bd
   \sapc: D((\alg, G,c)-\on{mod})\rightarrow D((\alg, G)-\on{mod})
   \ed
    has left adjoint $\mathbb{L}\Phi_c$.  
\end{lemma}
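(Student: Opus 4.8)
The plan is to treat the two assertions in turn: first produce the abelian left adjoint $\Phi_c$, then left-derive it. For the abelian statement I would check directly that the displayed formula defines a functor $\Phi_c\colon (\alg,G)-\on{mod}\to(\alg,G,c)-\on{mod}$ with the asserted universal property. The single computational input is that each operator $\gamma_{M,c}(z)=\alpha(z)-\mu_c(z)$ lies in $\on{End}_{\alg}(M)$: this is precisely the Leibniz rule $\alpha(z)(am)=[\mu(z),a]m+a\,\alpha(z)(m)$ together with centrality of the scalar $c(z)$. Hence $K:=\sum_{z\in\mathfrak{g}}\gamma_{M,c}(z)M$ is an $\alg$-submodule of $M$. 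To see that $K$ is also stable under the rational $G$-action — so that $\Phi_c(M)=M/K$ inherits such an action and genuinely lies in $(\alg,G)-\on{mod}$ — I would record the identity $g\cdot\bigl(\gamma_{M,c}(z)m\bigr)=\gamma_{M,c}\bigl(\mathrm{Ad}(g)z\bigr)(g\cdot m)$, which follows from $G$-equivariance of the quantum moment map $\mu$ and of the action derivative $\alpha$ together with the $\mathrm{Ad}$-invariance of the character $c$ (automatic, since $G$ is connected, so $\mathrm{Ad}(g)z-z\in[\mathfrak{g},\mathfrak{g}]=\ker c$). By construction the induced operators $\gamma_{M/K,c}(z)$ then vanish, so $\Phi_c(M)\in(\alg,G,c)-\on{mod}$. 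The same identity shows that $m\otimes z\mapsto\gamma_{M,c}(z)m$ defines a morphism $M\otimes\mathfrak{g}\to M$ in $(\alg,G)-\on{mod}$, with $\mathfrak{g}$ in the adjoint representation, so in fact $\Phi_c(M)=\coker\bigl(M\otimes\mathfrak{g}\to M\bigr)$; this makes $\Phi_c$ a functor and shows it is right exact (and preserves coproducts).

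For the universal property: given $N\in(\alg,G,c)-\on{mod}$, any morphism $f\colon M\to\sapc(N)$ in $(\alg,G)-\on{mod}$ is both $\alg$-linear and $G$-equivariant, hence intertwines the infinitesimal twisted actions, $f\circ\gamma_{M,c}(z)=\gamma_{N,c}(z)\circ f$. The right-hand side vanishes because $N$ is $(G,c)$-equivariant, so $f$ annihilates $K$ and factors uniquely through $\Phi_c(M)$. This yields a bijection $\Hom_{(\alg,G,c)-\on{mod}}(\Phi_c M,N)\cong\Hom_{(\alg,G)-\on{mod}}(M,\sapc N)$, visibly natural in $M$ and $N$; that is, $\Phi_c\dashv\sapc$.

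For the derived statement: $\sapc$ is exact, so it passes verbatim to the unbounded derived categories and equals its own total derived functor. Since $\Phi_c$ is right exact and $(\alg,G)-\on{mod}$ has enough projectives (Lemma \ref{enough proj}), I would define $\mathbb{L}\Phi_c$ by $\mathbb{L}\Phi_c(M)=\Phi_c(P^\bullet)$ for $P^\bullet\xrightarrow{\sim}M$ a K-projective resolution; such resolutions exist for arbitrary unbounded complexes because $(\alg,G)-\on{mod}$, for $G$ reductive, is a Grothendieck category with a set of projective generators $\alg\otimes V$ and with exact products (products being computed in the semisimple category of rational $G$-modules), so the usual Spaltenstein-type argument applies. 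The adjunction $\mathbb{L}\Phi_c\dashv\sapc$ is then formal: being left adjoint to the exact functor $\sapc$, the functor $\Phi_c$ preserves projectives, and in fact sends K-projective complexes to K-projective complexes — if $P$ is K-projective and $B$ is acyclic, then $\Hom_{K(\alg,G,c)}(\Phi_c P,B)\cong\Hom_{K(\alg,G)}(P,\sapc B)=0$ since $\sapc B$ is acyclic — so $\Phi_c(P^\bullet)$ is K-projective and one obtains the natural chain $\Hom_{D(\alg,G,c)}(\mathbb{L}\Phi_c M,N)\cong\Hom_{K(\alg,G,c)}(\Phi_c P^\bullet,N)\cong\Hom_{K(\alg,G)}(P^\bullet,\sapc N)\cong\Hom_{D(\alg,G)}(M,\sapc N)$, the middle isomorphism coming from the termwise abelian adjunction $\Phi_c\dashv\sapc$.

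I expect essentially all the genuine content to be in the abelian part, and within it the only delicate point to be the $G$-stability of $K$ — the single place where connectedness of $G$, via $\mathrm{Ad}$-invariance of $c$, is actually used. Everything in the derived part is bookkeeping; the one thing that needs a word is that $\mathbb{L}\Phi_c$ is defined on the whole unbounded derived category and that $\Phi_c$ respects K-projectivity, both of which are supplied by the homotopical framework recalled in Section \ref{equiv der cat}.
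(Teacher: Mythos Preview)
Your proof is correct and complete; the paper itself does not supply a proof, merely citing Section~3 of \cite{Kashiwara}. One minor quibble: you write $[\mathfrak{g},\mathfrak{g}]=\ker c$, but of course one only has $[\mathfrak{g},\mathfrak{g}]\subseteq\ker c$ in general---this inclusion is all you use, so the argument stands.
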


We next introduce some specific $(G,c)$-equivariant modules; these are a special case of the induction functor described in Section
\ref{adjunctions}.  Let $\rho: G\rightarrow \Gm$ be a character.  We get a $(G,c)$-equivariant module 
$M_c(\rho) \overset{\on{def}}{=} \Phi_c(\D\otimes\rho)$, where $\D\otimes\rho$ denotes $\alg$ considered as a weakly $G$-equivariant $\alg$-module by twisting the $G$-action by $\rho$.  A calculation shows that 
$\gamma_{\alg\otimes\rho, c} = \gamma_{\alg, c-d\rho}$ where $d\rho: \mathfrak{g}\rightarrow \C$ denotes the derivative of $\rho$.  
Hence 
\begin{equation}\label{twist calc}
M_c(\rho) \cong M_{c-d\rho}\otimes\rho
\end{equation}
as $(\alg,G)$-modules.

\subsection{Quantum Hamiltonian Reduction Functor}
As above, we let $(\alg,G)-\on{mod}$ denote the abelian category of weakly $G$-equivariant $\alg$-modules and $(\alg, G, c)-\on{mod}$ denote the full subcategory of $(G,c)$-equivariant modules.

We define the functor of {\em quantum Hamiltonian reduction} 
\bd
\mathbb{H}: (\alg,G)-\on{mod} \longrightarrow U_c-\on{mod}
\ed
 by the formula 
\begin{equation}\label{defn of H}
\mathbb{H}(M) = \on{Hom}_{\alg}(M_c, M)^G = \Hom_{(\alg,G)}(M_c, M).
\end{equation}
Composing with $\sapc$ defines a functor
\bd
\mathbb{H}_c = \mathbb{H}\circ\sapc: (\alg,G,c)-\on{mod}\rightarrow U_c-\on{mod}.
\ed
If $M$ is $(G,c)$-equivariant, then it is completely reducible as a $\mathfrak{g}$-module under the action
via $\mu_c$.  Thus,
since $\mathfrak{g}$ is reductive, as in (7.3) and (7.4) of \cite{GG}, we have
\begin{equation}\label{invariants and coinvariants}
\on{Hom}_{\alg}(M_c,M) = M^{\mu_c(\mathfrak{g})} = M_{\mu_c(\mathfrak{g})} = M_c^\dagger \otimes_{\alg} M
\end{equation}
for $M\in (\alg,G,c)-\on{mod}$. 
Since $G$ is reductive, we thus have:
\begin{lemma}\label{basics of H}
\mbox{}
\begin{enumerate}
\item $\mathbb{H}_c$ is an exact functor.
\item If $G$ is connected and $M$ is $(G,c)$-equivariant, then 
\bd
\mathbb{H}_c(M) = \Hom_{\alg}(M_c,M) = M^G.
\ed
\item In particular, $\mathbb{H}_c(M_c) = U_c$.
\end{enumerate}
\end{lemma}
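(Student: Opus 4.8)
The plan is to establish (2) first; (1) and (3) then follow almost formally. Fix $M\in(\alg,G,c)\text{-mod}$. I would begin from the identification $\Hom_\alg(M_c,M)\cong M^{\mu_c(\mathfrak g)}$ already recorded in \eqref{invariants and coinvariants}: since $M_c=\alg/\alg\mu_c(\mathfrak g)$, an $\alg$-linear map $M_c\to M$ is determined by the image of the class of $1$, and that image may be any vector annihilated by $\mu_c(\mathfrak g)$. Because $\alpha(z)(1)=[\mu(z),1]=0$, the class of $1$ in $M_c$ is fixed by the connected group $G$, so this identification is $G$-equivariant. Now use that $M$ is $(G,c)$-equivariant, i.e.\ $\gamma_{M,c}=\alpha-\mu_c$ vanishes on $M$: the $\mathfrak g$-action on $M$ through $\mu_c$ then coincides with the derivative $\alpha$ of the $G$-action, whence $M^{\mu_c(\mathfrak g)}=M^{\mathfrak g}$. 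Finally, since $G$ is connected and $M$ is a rational representation, the $\mathfrak g$-invariants coincide with the $G$-invariants, $M^{\mathfrak g}=M^G$ (the subspace $M^{\mathfrak g}$ is $G$-stable, and the $G$-action on it has zero derivative). In particular $G$ acts trivially on $\Hom_\alg(M_c,M)$, so $\mathbb{H}_c(M)=\Hom_\alg(M_c,M)^G=\Hom_\alg(M_c,M)=M^G$, which is (2).

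Granting (2), statement (1) is immediate: the identification $\mathbb{H}_c(M)\cong M^G$ is natural in $M\in(\alg,G,c)\text{-mod}$ and compatible with the $U_c$-module structures, and the functor of $G$-invariants is exact on the category of rational $G$-representations since $G$ is reductive (it is projection onto the trivial isotypic component). As a short exact sequence in $(\alg,G,c)\text{-mod}$ is in particular a short exact sequence of rational $G$-representations, $\mathbb{H}_c$ is exact. For (3), Lemma \ref{twisted equivariant modules} says $M_c$ itself lies in $(\alg,G,c)\text{-mod}$, so applying (2) with $M=M_c$ gives $\mathbb{H}_c(M_c)=M_c^G=U_c$ by the definition of $U_c$.

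I do not anticipate a genuine obstacle: every step rests on a standard fact, either semisimplicity of rational representations of a reductive group or the comparison between a connected group and its Lie algebra. The one point worth care is the exactness in (1): the functor $\Hom_\alg(M_c,-)$ is only left exact on all of $(\alg,G)\text{-mod}$, and what makes its restriction to the $(G,c)$-equivariant subcategory exact is exactly that on that subcategory invariants and coinvariants for the $\mu_c$-action agree (so that there it also coincides with the right-exact functor $M_c^\dagger\otimes_\alg-$ of \eqref{invariants and coinvariants}).
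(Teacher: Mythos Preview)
Your argument is correct and matches the paper's own reasoning, which is left implicit in the sentence ``Since $G$ is reductive, we thus have:'' following \eqref{invariants and coinvariants}. The only cosmetic difference is that you deduce (1) from (2) via $\mathbb{H}_c(M)\cong M^G$, whereas the paper's setup suggests reading off exactness directly from the identification $\Hom_\alg(M_c,-)=M_c^\dagger\otimes_\alg-$ in \eqref{invariants and coinvariants} together with exactness of $(-)^G$; your final paragraph already notes this alternative, so there is no substantive divergence.
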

In general,  we have 
\bd
\mathbb{R}\Hom_{\alg}(M_c, M) = M_c^\dagger[g]\overset{\mathbb{L}}{\otimes}M.
\ed

\subsection{Microlocal Abelian Categories}\label{microlocalization}
Suppose $M$ is an $\alg$-module.  If $M$ is finitely generated over $\alg$, one may choose a good filtration and then 
$SS(M) = \on{supp}\, \on{gr}(M)\subseteq \bigvar$ is independent of the choice of good filtration.  For an arbitrary $\alg$-module $M$, we let $SS(M)$ denote the union of $SS(M')$ over finitely generated submodules $M'\subseteq M$.  

If $M$ is weakly $G$-equivariant, then $SS(M)$ is a union of $G$-stable closed subsets of $\bigvar$, hence itself $G$-stable.  
Fixing, as before, a character $\chi$, we let $\bigvar^{uns}$ denote the $\chi$-unstable subset of $\bigvar$, and let $(\alg,G)-\on{mod}^{uns}$ denote the full subcategory of modules with singular support contained in $\bigvar^{uns}$, and similarly for $(\alg,G,c)-\on{mod}$.  
We make similar definitions for $\cR = \cR(\alg)$-modules: namely, if $M$ is a graded $\cR$-module, then $M/tM$ is a 
$\on{gr}\,\cR = \C[\bigvar]$-module.  If $M$ is finitely generated, we write $SS(M)$ for the support of $M/tM$, and in general let $SS(M)$ denote the union of $SS(M')$ over finitely generated $\cR$-submodules $M'\subseteq M$.   We then define $(\cR,G)-\on{mod}^{uns}$ and $(\cR,G,c)-\on{mod}^{uns}$ analogously.
All of these categories
are {\em localizing subcategories} in the sense of Section III.4.4 of \cite{Popescu}, (see also III.1 of \cite{Gabriel} and Exercise 8.13 of \cite{KS}).  In particular:
\begin{lemma}\label{quotient cats}
\mbox{}
\begin{enumerate}
\item The quotient abelian categories 
\bd
\E_{\resol}(c)-\on{mod} \overset{\on{def}}{=} (\alg,G,c)-\on{mod}/(\alg,G,c)-\on{mod}^{uns}, \;
(\alg,G)-\on{mod}^{ss} \overset{\on{def}}{=} (\alg,G)-\on{mod}/(\alg,G)-\on{mod}^{uns}
\ed 
and
\bd
(\cR,G,c)-\on{mod}/(\cR,G,c)-\on{mod}^{uns}, \;\;\; (\cR,G)-\on{mod}/(\cR,G)-\on{mod}^{uns}
\ed
exist and are Grothendieck categories, hence they have enough injectives.  
\item The projection functors on the quotient categories in (1) have right adjoints.
\item For every $a\in \C$, the functor $\C[t]/(t-a)\otimes_{\C[t]} -$, from (weakly or strongly equivariant) $\cR$-modules to $\alg$-modules (for $a\neq 0$) or $\C[\bigvar]$-modules (for $a=0$), descends to the quotient categories compatibly with the projections.  
\end{enumerate}
\end{lemma}
We will abusively write $\Hom{\E_{\resol}(c)}(-,-)$ to mean the Hom in the category $\E_{\resol}(c)-\on{mod}$.
Letting 
\bd
\pi_c: (\alg,G,c)-\on{mod}\longrightarrow \E_{\resol}(c)-\on{mod}, \;\; \pi: (\alg,G)-\on{mod}\longrightarrow (\alg,G)-\on{mod}^{ss}
\ed
denote the projections, we let
\bd
\Gamma_c: \E_{\resol}(c)-\on{mod} \rightarrow (\alg,G,c)-\on{mod}, \;\; 
\Gamma: (\alg,G)-\on{mod}^{ss} \rightarrow (\alg,G)-\on{mod}
\ed
denote the right adjoints.  We use the same notation when we replace $\alg$ by $\cR$.

We write $\E_{\resol}(c) = \pi_c(M_c)$ and $\E_{\resol}(c)\otimes\rho \overset{\on{def}}{=} \pi_c(M_c(\rho))$ for a character $\rho: G\rightarrow \Gm$.

\subsection{Microlocal Derived Categories}\label{microlocal derived cats}
We define $D(\cE_{\resol}(c))$ to be the unbounded derived category of the abelian category 
$\cE_{\resol}(c)-\on{mod}$.   We will refer to this category as the {\em microlocal derived category}. 
We refer to \cite{BN} for basics of unbounded derived categories.  The following Lemma computes $\mathbb R\Gamma_c(\E_{\resol}(c))$ in terms of a \v Cech complex. The machinery of noncommutative \v Cech complexes which we need is gathered in the appendix, Section \ref{proof of serre}.

\begin{lemma}\label{Gamma of D}
We have 
\bd
\mathbb{R}\Gamma_c(\E_{\resol}(c)) = \check{C}^\bullet(M_c) \simeq \check{C}^\bullet(\alg)\otimes_{U(\mathfrak{g})} C(\mathfrak{g}),
\ed
where $\check{C}^\bullet$ denotes the \v{C}ech complex from Section \ref{proof of serre}.
\end{lemma}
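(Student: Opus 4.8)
The plan is to compute $\mathbb{R}\Gamma_c(\E_{\resol}(c))$ by choosing a convenient explicit representative and then matching it against the \v{C}ech resolution. Recall that $\E_{\resol}(c) = \pi_c(M_c)$, so by adjunction $\mathbb{R}\Gamma_c(\E_{\resol}(c))$ is computed by applying $\Gamma_c$ to an injective resolution of $\pi_c(M_c)$ inside $\E_{\resol}(c)-\on{mod}$; equivalently, one may apply the composite $\pi_c$ followed by $\mathbb{R}\Gamma_c$ to a suitable (e.g. $\pi_c$-acyclic, or simply $\Gamma_c\pi_c$-acyclic) resolution of $M_c$ in $(\alg,G,c)-\on{mod}$. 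The key is to exhibit a resolution of $M_c$ whose terms compute the desired local cohomology, and the \v{C}ech complex $\check{C}^\bullet$ from Section \ref{proof of serre} is built precisely for this: it is constructed from the semi-invariant functions cutting out $\bigvar^{ss}$ (a cover of $\resol$ by affine opens $\spec \C[\bigvar]^{G,\chi^\ell}[1/s]$ for semi-invariants $s$), so that for any weakly equivariant module $N$ the complex $\check{C}^\bullet(N)$ has singular support meeting $\bigvar^{uns}$ only in its zeroth term's ``non-sheafy'' part, and computes $\mathbb{R}\Gamma\pi(N)$.

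First I would recall the formalism of the \v{C}ech complex $\check{C}^\bullet(-)$ and record its defining property: for $N\in(\alg,G)-\on{mod}$ one has $\mathbb{R}\Gamma(\pi(N)) \simeq \check{C}^\bullet(N)$, and likewise in the $c$-twisted setting $\mathbb{R}\Gamma_c(\pi_c(N)) \simeq \check{C}^\bullet(N)$ whenever $N$ is $(G,c)$-equivariant, because the \v{C}ech differentials are $\alg$-linear and $G$-equivariant and hence preserve the $(G,c)$-equivariance condition. Applying this with $N = M_c$ gives the first isomorphism $\mathbb{R}\Gamma_c(\E_{\resol}(c)) \simeq \check{C}^\bullet(M_c)$ directly. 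For the second isomorphism, I would use the free $\alg$-resolution $\alg\otimes_{U(\mathfrak{g})} C(\mathfrak{g}) \to M_c$ by the Chevalley–Eilenberg complex, already introduced in the discussion preceding Lemma \ref{M_c dual}. Since $\check{C}^\bullet(-)$ is an exact functor in the module argument (each term is a localization, which is flat/exact, and finite sums and the total complex of a bicomplex preserve quasi-isomorphisms), applying $\check{C}^\bullet$ termwise to this resolution and taking the total complex yields $\check{C}^\bullet(\alg\otimes_{U(\mathfrak{g})} C(\mathfrak{g})) = \check{C}^\bullet(\alg)\otimes_{U(\mathfrak{g})} C(\mathfrak{g})$, quasi-isomorphic to $\check{C}^\bullet(M_c)$. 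One needs that each term $\alg\otimes\wedge^k\mathfrak{g}$ of the resolution is $\Gamma_c\pi_c$-acyclic (equivalently, that $\check{C}^\bullet$ of it genuinely computes $\mathbb{R}\Gamma_c\pi_c$ with no higher corrections), which holds because $\alg\otimes\wedge^k\mathfrak{g}$ is, up to the twist, a sum of copies of $\alg$ and these are handled by the \v{C}ech machinery of Section \ref{proof of serre}.

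The main obstacle is bookkeeping around the two localizing subcategories and the twist: one must check that passing to $(G,c)$-equivariant modules and then to the microlocal quotient commutes appropriately with the Chevalley–Eilenberg resolution, i.e. that $\check{C}^\bullet(\alg\otimes_{U(\mathfrak{g})}C(\mathfrak{g}))$ is a legitimate model for $\mathbb{R}\Gamma_c$ applied to $\pi_c(M_c)$ and not merely for some a priori different derived functor. This is resolved by the exactness of $\check{C}^\bullet$ in its module argument together with the compatibility — established in Section \ref{proof of serre} — of $\check{C}^\bullet$ with the projection/section adjunction $(\pi_c,\Gamma_c)$; once these are in hand the chain of quasi-isomorphisms $\mathbb{R}\Gamma_c(\E_{\resol}(c)) = \mathbb{R}\Gamma_c\pi_c(M_c) \simeq \check{C}^\bullet(M_c) \simeq \check{C}^\bullet(\alg)\otimes_{U(\mathfrak{g})} C(\mathfrak{g})$ is immediate. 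A secondary, purely formal point is to confirm that the outer tensor notation $\check{C}^\bullet(\alg)\otimes_{U(\mathfrak{g})} C(\mathfrak{g})$ unambiguously denotes the total complex of the evident bicomplex, which I would state explicitly to avoid sign or indexing ambiguities.
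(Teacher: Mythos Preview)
Your proposal is correct and follows essentially the same route as the paper: apply Theorem~\ref{Cech calculates} to $M_c = \pi_c^{-1}(\E_{\resol}(c))$ for the first quasi-isomorphism, then use the Chevalley--Eilenberg resolution $\alg\otimes_{U(\mathfrak{g})}C(\mathfrak{g})\to M_c$ (valid because $\mu$ flat makes $\alg$ flat over $U(\mathfrak{g})$) together with flatness of each \v{C}ech term over $\alg$ for the second. The paper phrases the second step via flatness of $\alg$ over $U(\mathfrak{g})$ rather than exactness of $\check{C}^\bullet(-)$, but these amount to the same thing here; your remark about $\Gamma_c\pi_c$-acyclicity of $\alg\otimes\wedge^k\mathfrak{g}$ is not quite the right formulation (those terms are only weakly equivariant), but you correctly identify that the exactness of $\check{C}^\bullet$ as a functor of $\alg$-modules is what actually carries the argument.
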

\begin{proof}
We have $\E_{\resol}(c) = \pi_c(M_c)$ by definition.  We thus have 
$\mathbb{R}\Gamma_c(\E_{\resol}(c)) \simeq \check{C}^\bullet(M_c)$
 from Theorem \ref{Cech calculates}.  Now the definition of $M_c$ and the flatness of $\alg$ over $U(\mathfrak{g})$ implies (via Proposition \ref{Cech of a quotient}) that
 \bd
 H^i\big(\check{C}^\bullet(M_c)\big) \cong H^i\big(\check{C}^\bullet(\alg)\big)\otimes_{U(\mathfrak{g})} C(\mathfrak{g}), 
 \ed
 where (as above) $C(\mathfrak{g})$ is the Chevalley-Eilenberg complex.
\end{proof}

We note here that there is an alternative construction of the (dg enhanced if one prefers) derived category $D(\E_{\resol}(c))$ using a quotient operation for (the dg enhancement of) the derived category $D((\alg,G,c)-\on{mod})$.  Namely, we let $D((\alg,G,c)-\on{mod})^{uns}$ denote the full dg subcategory consisting
of objects whose cohomologies have unstable support in the above sense.  The associated full triangulated subcategory is thick.  Passing to the quotient category, which we also denote by 
$D(\E_{\resol}(c))$, we obtain a dg enhanced triangulated category together with a cohomology functor 
$H^0: D(\E_{\resol}(c))\rightarrow \E_{\resol}(c)-\on{mod}$ (see \cite{Drinfeld}, especially Appendix A).  

The fact that the two constructions agree follows from the next lemma.
\begin{lemma}\label{abelian vs dg}
Suppose that $D=D(A)$ is the (bounded or unbounded) derived category of an abelian category $A$, $T\subset A$ is a localizing subcategory, and $D(A/T)$ is the (corresponding bounded or unbounded, according to $D(A)$) derived category of $A/T$.  Let 
$C$ denote the full subcategory of $D$ consisting of objects whose cohomologies (with respect to the standard $t$-structure) lie in $T$.  Then the canonical map $D(A)\rightarrow D(A/T)$ induces an equivalence $D(A)/C\simeq D(A/T)$.
\end{lemma}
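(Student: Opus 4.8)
The plan is to build the equivalence $D(A)/C \xrightarrow{\sim} D(A/T)$ by verifying that the canonical functor $Q\colon D(A)\to D(A/T)$ induced by the exact quotient functor $q\colon A\to A/T$ has exactly $C$ as the kernel of its localization, and then invoking a universal property. First I would recall that $q$ is exact (it is the quotient by a localizing subcategory, cf.\ \cite{Popescu}), hence it induces a triangulated functor $Q$ on derived categories; moreover $q$ has a right adjoint (the section functor), so $Q$ likewise has a right adjoint $\mathbb{R}s$, computed by taking injective resolutions — here one uses that both $A$ and $A/T$ have enough injectives and that $s$ sends injectives to injectives. The first key step is then to show that an object $M^\bullet\in D(A)$ satisfies $Q(M^\bullet)\cong 0$ if and only if $M^\bullet\in C$, i.e.\ all its cohomology objects lie in $T$. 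The direction ``$M^\bullet\in C\Rightarrow Q(M^\bullet)=0$'' follows because $q$ is exact, so $H^i(Q(M^\bullet)) = q(H^i(M^\bullet)) = 0$ for all $i$; the converse is the same computation read backwards, since $q$ detects zero objects on an object-by-object basis via its cohomologies. This identifies $C$ as a thick subcategory and shows $Q$ factors through $D(A)/C$, giving $\bar Q\colon D(A)/C\to D(A/T)$.

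The second step is to show $\bar Q$ is essentially surjective and fully faithful. Essential surjectivity is easy: every object of $A/T$ is of the form $q(a)$, and for a complex $N^\bullet$ over $A/T$ one builds an object of $D(A)$ mapping to it by resolving — more cleanly, one uses that $q\circ s \simeq \mathrm{id}_{A/T}$ (the counit is an isomorphism for a localization), so $Q(\mathbb{R}s(N^\bullet))\cong N^\bullet$. For full faithfulness, the standard approach is to show that the unit $\mathrm{id}\to \mathbb{R}s\circ Q$ becomes an isomorphism after passing to $D(A)/C$, equivalently that the cone of $M^\bullet\to \mathbb{R}s\, Q(M^\bullet)$ lies in $C$; combined with $Q\circ \mathbb{R}s\simeq \mathrm{id}$ this yields that $\bar Q$ and the induced functor from $\mathbb{R}s$ are mutually inverse. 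Concretely, I would check on cohomology: for an injective resolution $M^\bullet\to I^\bullet$, the complex $s(q(I^\bullet))$ computes $\mathbb{R}s\,Q(M^\bullet)$, and the adjunction unit $I^\bullet\to s(q(I^\bullet))$ has kernel and cokernel term-by-term in $T$ (this is the defining property of the section functor of a Serre quotient on injectives, cf.\ again \cite{Popescu} and the treatment of quotients of Grothendieck categories), so its cone has cohomology in $T$, i.e.\ lies in $C$.

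The main obstacle, and the only genuinely delicate point, is the interchange of $\mathbb{R}s$ with infinite coproducts / the behavior in the \emph{unbounded} derived category: one must know that $\mathbb{R}s$ is computed by $K$-injective resolutions and that the section functor preserves enough structure for the cohomological computation above to remain valid without boundedness hypotheses. This is exactly the kind of statement handled in \cite{BN} and is also an instance of the general Bousfield-localization / recollement formalism: $C$ is the kernel of a Bousfield localization of $D(A)$ whose local objects are (the image of) $D(A/T)$. I would therefore organize the argument so that once $Q$ is shown to admit the right adjoint $\mathbb{R}s$ with $Q\circ\mathbb{R}s\simeq\mathrm{id}$ and $\ker Q = C$, the conclusion $D(A)/C\simeq D(A/T)$ is immediate from the universal property of Verdier localization together with the fact that $\mathbb{R}s$ identifies $D(A/T)$ with the right-orthogonal $C^\perp$ inside $D(A)$. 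This reduces everything to the two cohomology-level computations above, which are routine given exactness of $q$ and the injective-preservation property of $s$.
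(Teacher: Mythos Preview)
Your proof is correct and more thorough than the paper's. The paper's argument is extremely terse: it simply observes that the kernel of the induced functor $Q\colon D(A)\to D(A/T)$ is exactly $C$, via the same cohomology computation you give in your first paragraph (exactness of $q$ gives $H^i(Q(M^\bullet)) = q(H^i(M^\bullet))$), and then asserts that this suffices. You carry out that same kernel identification but then go on to supply what the paper leaves implicit: essential surjectivity via $Q\circ\mathbb{R}s\simeq\mathrm{id}$, and full faithfulness by showing the cone of the unit $M^\bullet\to \mathbb{R}s\,Q(M^\bullet)$ lies in $C$. Your careful attention to $K$-injective resolutions in the unbounded case is legitimate housekeeping that the paper also elides. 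In short, the core computation is identical; your version is a fleshed-out justification of the paper's one-line claim ``it suffices to check that the kernel is $C$.''
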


\begin{remark}\label{duality descends}
The Verdier duality functor $M\mapsto \mathbb{D}_{\alg}(M) = \mathbb{R}\Hom_{\alg}(M,\alg)$ preserves singular support
by Proposition D.4.2 of \cite{HTT} (the proposition immediately generalizes to our setting).  It follows that Verdier duality descends to the microlocal derived category; we denote this induced functor still by $\mathbb{D}$.
\end{remark}

\subsection{Sheaf-Theoretic Interpretation of Derived Categories}\label{sheaf-theoretic section}
The contents of Section \ref{sheaf-theoretic section} are for motivation only and will not be used elsewhere in the paper.

In the case when $G$ acts freely on $\mu\inv(0)^{ss} = \mu\inv(0)^s$, yielding a smooth quotient variety,
the category $D(\E_{\resol}(c))$ has a geometric interpretation which has appeared elsewhere in the literature (and explains our notation).
Namely, the microrestriction of the sheaf $M_c$ of $(\alg,G)$-modules to $\mu\inv(0)^{s}$ descends (via $G$-invariant push forward) to a sheaf
of filtered algebras $\cA_{\resol}(c)$ on $\resol = \mu\inv(0)^s/G$, with associated graded sheaf $\theo_{\resol}$. In the (related but different) context of $\mathscr W$-algebras, this is the approach taken by Kashiwara and Rouquier, \cite[\S2.5]{KR}.   The algebra $\cA_{\resol}(c)$ 
is an analog of the sheaf of microdifferential operators on the cotangent bundle of a smooth variety.  The abelian category $\E_{\resol}(c)-\on{mod}$ comes equipped with a restriction functor to
$\cA_{\resol}(c)-\on{mod}$, and similarly $D(\E_{\resol}(c))$ comes with a functor to $D(\cA_{\resol}(c))$.  The restriction functor is faithful, and thus $\E_{\resol}(c)-\on{mod}$ can be understood as a category of modules for the sheaf of rings $\cA_{\resol}(c)$.  

In more detail, recall that we may construct a sheaf of rings $\cE_{\bigvar}$ on $\bigvar$ by microlocalizing $\alg$ (see for example \cite{AVV} for an algebraic description of this construction), and we may similarly microlocalize $M_c$ to obtain an $\cE_{\bigvar}$-module $\cM_c$. 
Although $A$ is nonnegatively filtered, its microlocalization $\cE_{\bigvar}$ is a $\mathbb Z$-filtered sheaf of algebras (elements with symbol of positive degree can be inverted), which thus contains a natural sheaf of subalgebras $\cE_{\bigvar}(0)$ consisting of the sections in the nonpositive part of the filtration. A \textit{lattice} for a coherent $\cE_{\bigvar}$-module $\mathcal N$ is a coherent $\cE_{\bigvar}(0)$-submodule which generates $\mathcal N$ as a $\cE_{\bigvar}$-module. The data of an $\cE_{\bigvar}(0)$-lattice is the microlocal analogue of a good filtration. See sections $7.5$ and $8.7$ of \cite{Kashiwara1} for more details in the setting of microdifferential operators.

Using the sheaf of modules $\cM_c$ we may define a sheaf-theoretic version of quantum Hamiltonian reduction: The $G$-invariant direct image of $\mathcal End(\cM_c)$ to $\resol$ gives a sheaf of filtered algebras $\cA_{\resol}(c)$ on $\resol$. Moreover, the module $\cM_c$ has a natural $\cE_{\resol}(0)$-lattice $\cM_c(0)$ and the $G$-invariant direct image of its $\cE_{\bigvar}(0)$-endomorphisms equips $\cA_{\resol}$ with a corresponding sheaf of subalgebras $\cA_{\resol}(c)(0)$.
  An $\cA_{\resol}(c)$-module $M$ is {\em good} if it admits a coherent $\cA_{\resol}(c)(0)$-submodule $M(0)$ which generates $M$ as an $\cA_{\resol}(c)$-module.
\begin{prop}
Suppose the action of $G$ on $\mu^{-1}(0)^{ss}$ is free, so that we have a sheaf of algebras $\cA_{\resol}(c)$ on $\resol$ as described above. 
There is a natural functor from $\cE_{\resol}(c)-\on{mod}$ to $\cA_{\resol}(c)-\on{mod}$. The essential image in $\cA_{\resol}(c)-\on{mod}$ of the category of finitely generated $\cE_{\resol}(c)$-modules is the subcategory of good modules; moreover, the functor to this category is full.  The essential image of $\cE_{\resol}(c)-\on{mod}$ is the ind-category of the category of good modules, i.e. the category of modules that are colimits of systems of good modules. In particular we see that in this case
\[
\cE_{\resol}(c)-\on{mod} \cong \on{Ind}(\cA_{\resol}(c)-\on{mod}_{\on{good}}).
\]
\end{prop}
A proof of the analogous result for DQ algebras will appear in a forthcoming joint paper with Bellamy and Dodd.  
A similar statement appears, with a completely different proof strategy, in the comprehensive recent paper \cite{BLPW2}.

\subsection{On Different Types of Derived Categories}\label{different types}
Section \ref{different types} is not needed in the rest of the paper: we include it for completeness only.

In general, if $S$ is a stack then the ``correct" derived category of $\D$-modules on $S$ is not equivalent to the derived category of the abelian category of $\D$-modules (see Section 7.6 of \cite{BD} or \cite{BL}).  In general, one should proceed as follows.  Suppose $N^\bullet$ is a
complex of weakly $G$-equivariant $\D(W)$-modules (with $G$-equivariant $\D(W)$-module homomorphisms).  A {\em $(G,c)$-equivariant $\D$-complex} is such a weakly $G$-equivariant $N^\bullet$ together with a morphism $\mathfrak{g}\otimes N^\bullet \xrightarrow{i_-} N^\bullet[-1]$,
$Z\otimes n\mapsto i_Z(n)$, such that for any $Z\in \mathfrak{g}$ one has 
\bd
i_Z^2 = 0 \;\;\; \text{and} \;\;\; di_Z + i_Zd = \gamma_{N^\bullet, c}
\ed 
(cf. Definition 7.6.11 of \cite{BD}).  
One lets $C(\D(W), G, c)$ denote the dg category of such complexes.  There is a natural cohomology functor $C(\D(W), G, c)\xrightarrow{H^0}
(\D(W), G,c)-\on{mod}$.  Localizing $C(\D(W), G, c)$ with respect to quasi-isomorphisms gives a pre-triangulated
dg category which we denote by $D_h(\D(W),G,c)$.  The heart of the standard $t$-structure on $D_h(\D(W),G,c)$ is exactly $(\D(W), G,c)-\on{mod}$
(Section 7.6.11 of \cite{BD}).  One can then, as above, take the quotient by the thick subcategory of complexes whose cohomologies have unstable support to obtain a ``homotopical'' derived category, which we will denote by
$D_h(\cE_{\resol}(c))$. 

In general, this construction will not agree with the one we gave above.  
However, in our case:
\begin{prop}\label{two derived cats agree}
Under Assumption \ref{assumptions}(1), i.e. flatness of the moment map $\mu$, the induced functors 
\bd
D((\D,G,c)-\on{mod})\rightarrow D_h(\D, G,c), \hspace{2em} D(\E_{\resol}(c)-\on{mod})\rightarrow D_h(\cE_{\resol}(c))
\ed
 are exact equivalences of triangulated categories.
\end{prop}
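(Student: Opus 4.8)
The plan is to reduce both assertions to the single statement that the canonical functor
\bd
\iota\colon D((\D,G,c)-\on{mod})\longrightarrow D_h(\D,G,c)
\ed
is an exact equivalence. This reduction is formal: $\iota$ is $t$-exact for the standard $t$-structures, whose common heart is $(\D,G,c)-\on{mod}$ (Section 7.6.11 of \cite{BD}), and restricts to the identity functor on that heart, so it preserves the condition that every cohomology object have unstable singular support. Hence, granting that $\iota$ is an equivalence, it identifies the subcategory $C$ of Lemma \ref{abelian vs dg} with the thick subcategory of $D_h(\D,G,c)$ consisting of complexes with unstable cohomology, and therefore descends to an equivalence of Verdier quotients. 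By definition the right-hand quotient is $D_h(\cE_{\resol}(c))$, and Lemma \ref{abelian vs dg} identifies the left-hand quotient with $D(\cE_{\resol}(c)-\on{mod})$; this gives the second asserted equivalence, and the first is what we set out to prove.

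To see that $\iota$ is an equivalence, I would show that it is a coproduct-preserving exact functor between compactly generated triangulated categories which carries a set of compact generators of the source, fully faithfully, onto a set of compact generators of the target; such a functor is automatically an equivalence by the standard theory of compactly generated categories (cf. \cite{Neeman}). For a finite-dimensional $G$-representation $V$, put $P_V\overset{\on{def}}{=}\Phi_c(\D\otimes V)$. By Lemma \ref{enough proj} the module $\D\otimes V$ is projective in $(\D,G)-\on{mod}$; since the forgetful functor $\sapc$ is exact, $P_V$ is a projective object of $(\D,G,c)-\on{mod}$, and since $\Hom_{(\D,G,c)}(P_V,-)\cong\Hom_{(\D,G)}(\D\otimes V,\sapc(-))$ preserves coproducts, $P_V$ is moreover compact in $D((\D,G,c)-\on{mod})$; the $P_V$ together generate, so they form a set of compact generators. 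On the target side, the analogous generators are the ``free'' $(G,c)$-equivariant $\D$-complexes $F_V\overset{\on{def}}{=}\D\otimes V\otimes\wedge^\bullet\mathfrak{g}$, with the Chevalley--Eilenberg differential and the exterior contractions as the operators $i_Z$; in the formalism of Section 7.6 of \cite{BD} these are cofibrant, compact, and generate $D_h(\D,G,c)$.

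The flatness hypothesis enters precisely at this point. By Proposition 2.3.12 of \cite{Bjbook}, flatness of $\mu$ implies that $\D$, and hence each $\D\otimes V$, is flat over $U(\mathfrak{g})$ via $\mu_c$; therefore the Chevalley--Eilenberg complex $F_V$ has cohomology concentrated in degree $0$, where it is $P_V$. Thus $\iota(P_V)\simeq F_V$ in $D_h(\D,G,c)$, and $\iota$ carries the chosen compact generators of the source onto those of the target. It remains to check full faithfulness on the $P_V$. On one side, $\Hom_{D((\D,G,c)-\on{mod})}(P_V,P_W[k]) = \Ext^k_{(\D,G,c)}(P_V,P_W)$ vanishes for $k\neq 0$ because $P_V$ is projective. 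On the other, using the cofibrancy of $F_V$ and the adjunction between the free functor and the forgetful functor to weakly equivariant complexes,
\bd
\mathbb{R}\Hom_{D_h}(F_V,F_W)\simeq\Hom_{(\D,G)}\big(\D\otimes V,\text{forget}(F_W)\big)\simeq\Hom_{(\D,G)}(\D\otimes V,P_W)=\Hom_{(\D,G,c)}(P_V,P_W),
\ed
which is concentrated in degree $0$ (the middle quasi-isomorphism uses flatness again, namely that $\text{forget}(F_W)$ resolves $P_W$, together with exactness of $\Hom_{(\D,G)}(\D\otimes V,-)$). On the common $\Hom$-group in degree $0$ the map induced by $\iota$ is the identity, so $\iota$ is fully faithful on the $P_V$, and the argument is complete.

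I expect the main difficulty to lie not in this formal skeleton but in two supporting ingredients: first, correctly assembling the twisted Chevalley--Eilenberg complex $F_V$ in the weakly equivariant, $c$-twisted setting and extracting from Assumption \ref{assumptions}(1) that it is a resolution of $P_V$ (and checking the compatibility with the localization by unstable support used in the first step); second, verifying within the Beilinson--Drinfeld formalism of Section 7.6 of \cite{BD} that the $F_V$ are genuinely compact generators of $D_h(\D,G,c)$ and that the free--forgetful adjunction behaves as used above. Conceptually, however, the whole content of the proposition is that flatness of $\mu$ is exactly the hypothesis making $\mathbb{L}\Phi_c$ underived on the objects $\D\otimes V$, so that the ``abelian'' and ``homotopical'' constructions of the microlocal derived category cannot come apart.
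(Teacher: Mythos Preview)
Your reduction of the second equivalence to the first via Lemma \ref{abelian vs dg} is exactly what the paper does, and your identification of the crux---that flatness of $\mu$ makes the Chevalley--Eilenberg complex $\alg\otimes V\otimes\wedge^\bullet\mathfrak{g}$ an honest resolution of $\Phi_c(\alg\otimes V)$---is right on target.

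However, your route to the first equivalence differs from the paper's. The paper does not argue via compact generation; instead it invokes (a twisted version of) Theorem 3.4 of \cite{BL}, whose hypothesis is that $\D(W)$ be \emph{free} (not merely flat) as a right $U(\mathfrak{g})$-module. The (commented) argument first establishes freeness in the linear case $W\cong\C^d$ by putting the Bernstein filtration on $\D(W)$ and a compatible grading on $U(\mathfrak{g})$, then applying an argument of Knop; the general smooth affine case is reduced to the linear one via the \'etale slice theorem and the fact that both sides sheafify in the \'etale topology. Your approach, by contrast, works directly with flatness and attempts to bypass \cite{BL} entirely by a compact-generation argument.

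The trade-off is this: the paper's route is short because it outsources the hard work to \cite{BL}, but pays for this with the somewhat indirect passage flat $\Rightarrow$ free (via Knop, only in the linear case) and the \'etale reduction. Your route is conceptually cleaner and uses flatness directly, but the step you flag---that the $F_V$ are compact generators of $D_h(\D,G,c)$ and that the free--forgetful adjunction computes $\mathbb{R}\Hom$ as claimed---is essentially the content of \cite[Theorem~3.4]{BL}, so you have not really avoided it. In particular, showing that the $F_V$ generate $D_h$ amounts to showing that every object of $D_h$ is detected by its underlying weakly equivariant complex, which is precisely the nontrivial input. Your argument is therefore correct in outline, but its completion would require reproving the relevant part of \cite{BL} rather than merely citing the Beilinson--Drinfeld formalism.
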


The proof reduces to (a twisted version of) Theorem 3.4 of \cite{BL}: Assumption \ref{assumptions}(1) guarantees that the hypotheses of that theorem
are satisfied. Since we will not use this result elsewhere in the paper, we omit the proof.

\subsection{Adjunctions and Compact Generation}\label{adjunctions}
In this section we prove that the microlocal derived category is compactly generated.

We begin with a general fact.
\begin{lemma}\label{weakly cg}
Suppose that $A$ is a (possibly noncommutative) $\C$-algebra with rational $G$-action, where $G$ is a reductive group.  Then
$D((A,G)-\on{mod})$ is compactly generated by the objects $V\otimes A$ for finite-dimensional $G$-representations $V$.
\end{lemma}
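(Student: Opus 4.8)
The plan is to deduce the claim from the adjunction
\[
\Hom_{(A,G)-\on{mod}}(A\otimes V,\,M)\;\cong\;\Hom_G(V,M),
\]
natural in the finite-dimensional rational $G$-representation $V$ and in $M\in(A,G)-\on{mod}$; here $\Hom_G$ denotes morphisms of rational $G$-representations, $A\otimes V$ carries the diagonal $G$-action together with its evident left $A$-module structure, and the adjunction sends a weakly equivariant $A$-linear map $f$ to $v\mapsto f(1\otimes v)$ (a bijection, since $f$ is determined by its restriction to $1\otimes V$ and $G$ acts on $A$ by algebra automorphisms, whence $g\cdot(1\otimes v)=1\otimes gv$). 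Because $A$ is free over $\C$, the functor $A\otimes(-)$ is exact, so this descends to an adjoint pair $A\otimes(-)\colon D(\on{Rep}(G))\leftrightarrows D((A,G)-\on{mod})\colon\on{Res}$, with $\on{Res}$ the exact functor forgetting the $A$-action. I will also use that each $A\otimes V$ is a \emph{projective} object of $(A,G)-\on{mod}$ (Lemma \ref{enough proj}, whose proof applies verbatim to any $\C$-algebra with rational $G$-action).

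First I would check that $\{A\otimes V\}_V$ generates the abelian category $(A,G)-\on{mod}$: a nonzero weakly equivariant module $N$ contains a nonzero finite-dimensional $G$-subrepresentation $V$ by rationality, and the corresponding map $A\otimes V\to N$ is nonzero on $1\otimes V$. Now let $M^\bullet\in D((A,G)-\on{mod})$ be an object with $\Hom_D(A\otimes V[n],M^\bullet)=0$ for every finite-dimensional $V$ and every $n\in\bbZ$. Projectivity of $A\otimes V$ makes $\Hom_{(A,G)}(A\otimes V,-)$ exact, so it computes $\mathbb{R}\Hom$ and commutes with cohomology, giving $\Hom_D(A\otimes V[n],M^\bullet)\cong\Hom_{(A,G)}\big(A\otimes V,H^n(M^\bullet)\big)$. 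Hence $\Hom_{(A,G)}\big(A\otimes V,H^n(M^\bullet)\big)=0$ for all $V$ and $n$, and abelian generation forces $H^n(M^\bullet)=0$ for every $n$, i.e. $M^\bullet\simeq 0$. Thus $\{A\otimes V\}$ is a generating set of $D((A,G)-\on{mod})$.

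For compactness I would observe that for finite-dimensional $V$ the functor $\Hom_{(A,G)}(A\otimes V,-)\cong\Hom_G(V,-)\cong\big(V^{*}\otimes(-)\big)^{G}$ commutes with arbitrary coproducts, since $V$ is finite-dimensional and, $G$ being reductive, $(-)^{G}$ is exact and visibly commutes with direct sums. As $(A,G)-\on{mod}$ is a Grothendieck category, coproducts in it, and hence in $D((A,G)-\on{mod})$, are exact, computed termwise, and commute with cohomology; together with the identification $\Hom_D(A\otimes V[n],M^\bullet)\cong\Hom_{(A,G)}\big(A\otimes V,H^n(M^\bullet)\big)$ from the previous paragraph, this shows $A\otimes V$ is compact. (Alternatively one may invoke Lemma \ref{cg lemma}(1): $\on{Res}$ preserves coproducts, hence all colimits by Lemma \ref{colimits}, and $V$ is compact in the semisimple category $D(\on{Rep}(G))$, so $A\otimes V$ is compact.) Combined with the previous paragraph this proves $D((A,G)-\on{mod})$ is compactly generated by the objects $A\otimes V$. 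The argument is essentially bookkeeping; the one point needing care is that being a \emph{generating set} in the triangulated sense follows from generation of the underlying abelian category, and this is precisely where projectivity of the $A\otimes V$ (Lemma \ref{enough proj}) is indispensable.
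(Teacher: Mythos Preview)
Your proof is correct and follows essentially the same approach as the paper: both use projectivity of $A\otimes V$ (Lemma~\ref{enough proj}) to identify $\Hom_D(A\otimes V,M^\bullet[n])$ with $\Hom_{(A,G)}\big(A\otimes V,H^n(M^\bullet)\big)$, and then use that rationality of the $G$-action on a nonzero module produces a nonzero map from some $A\otimes V$. Your treatment of compactness, via the adjunction $\Hom_{(A,G)}(A\otimes V,-)\cong\Hom_G(V,-)$ and reductivity of $G$, is more explicit than the paper's one-line assertion that ``projective and finitely generated implies compact,'' but the content is the same.
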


As in Lemma \ref{adjunction}, there is an adjoint pair
\bd
\mathbb{L}\Phi_c: D((\alg, G)-\on{mod}) \leftrightarrows D((\alg,G,c)-\on{mod}): \sapc,
\ed
where $\mathbb{L}\Phi_c$ exists by Lemma \ref{enough proj}.
As in Section \ref{microlocalization}, there is an adjoint pair
\bd
\pi_c: D((\alg,G,c)-\on{mod})\leftrightarrows D(\E_{\resol}(c)): \mathbb{R}\Gamma_c.
\ed
\begin{prop}\label{compact generation}
\mbox{}
\begin{enumerate}
\item The right adjoints $\sapc$ and $\mathbb{R}\Gamma_c$ above preserve colimits.
\item The category $D(\E_{\resol}(c))$ is compactly generated.
\end{enumerate}
\end{prop}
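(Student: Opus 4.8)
The plan is to prove the two assertions in order; compact generation in (2) will then follow formally from (1), Lemma \ref{weakly cg}, and routine manipulations with adjoints.

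\emph{Part (1).} The functor $\sapc$ on derived categories is obtained by applying the (exact) forgetful functor $(\alg,G,c)-\on{mod}\to(\alg,G)-\on{mod}$ termwise, so it is an exact functor of triangulated categories. A coproduct of complexes in the unbounded derived category of a Grothendieck category is represented by the termwise direct sum, and the forgetful functor visibly commutes with termwise direct sums; hence $\sapc$ preserves coproducts, and by Lemma \ref{colimits} all small colimits. For $\mathbb{R}\Gamma_c$ I would use the \v{C}ech description of the section functor: every object of $D(\E_{\resol}(c))$ has the form $\pi_c(N)$, and $\pi_c$ preserves coproducts (it is a left adjoint), so it suffices to see that $N\mapsto\mathbb{R}\Gamma_c\pi_c(N)$ preserves coproducts. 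By Theorem \ref{Cech calculates} this functor is computed by a finite \v{C}ech complex $\check{C}^\bullet(N)$ attached to a cover of $\bigvar^{ss}$ by the nonvanishing loci of finitely many semi-invariant functions; each term of $\check{C}^\bullet(N)$ is a finite direct sum of localizations $N\mapsto N_{f_I}$ at products of those semi-invariants, and localization at an element, being a filtered colimit, commutes with coproducts. A finite complex built from coproduct-preserving exact functors preserves coproducts, so $\mathbb{R}\Gamma_c$ does, and Lemma \ref{colimits} again upgrades this to all small colimits.

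\emph{Part (2).} Composing $\mathbb{L}\Phi_c\dashv\sapc$ with $\pi_c\dashv\mathbb{R}\Gamma_c$ gives an adjunction
\[
\Psi:=\pi_c\circ\mathbb{L}\Phi_c\colon D((\alg,G)-\on{mod})\;\leftrightarrows\;D(\E_{\resol}(c))\colon\sapc\circ\mathbb{R}\Gamma_c=:R.
\]
By Lemma \ref{weakly cg} the objects $V\otimes\alg$, for $V$ a finite-dimensional $G$-representation, compactly generate $D((\alg,G)-\on{mod})$; they are projective (Lemma \ref{enough proj}), so $\mathbb{L}\Phi_c(V\otimes\alg)=\Phi_c(V\otimes\alg)$. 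By part (1), $R$ preserves colimits, and hence $\Psi$ carries compact objects to compact objects (for $a$ compact one computes $\Hom(\Psi a,\coprod_i b_i)=\coprod_i\Hom(\Psi a,b_i)$, using that $R$ preserves coproducts and $a$ is compact); in particular each $\Psi(V\otimes\alg)=\pi_c\Phi_c(V\otimes\alg)$ is compact. To see these generate, suppose $x\in D(\E_{\resol}(c))$ satisfies $\Hom(\Psi(V\otimes\alg)[n],x)=0$ for all $V$ and all $n\in\mathbb{Z}$. By adjunction $R(x)=\sapc\mathbb{R}\Gamma_c(x)$ is right-orthogonal to all $(V\otimes\alg)[n]$, so $R(x)=0$ since the $V\otimes\alg$ generate; since $\sapc$ is conservative (it reflects acyclic complexes) this forces $\mathbb{R}\Gamma_c(x)=0$; and then $\Hom(\pi_c(y),x)\cong\Hom(y,\mathbb{R}\Gamma_c(x))=0$ for every $y$, so $\Hom(z,x)=0$ for every $z\in D(\E_{\resol}(c))$ because $\pi_c$ is essentially surjective (Lemma \ref{abelian vs dg}), whence $x\cong0$. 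Thus $\{\Psi(V\otimes\alg)\}_V$ is a set of compact generators of $D(\E_{\resol}(c))$ (this is the argument of Lemma \ref{cg lemma}); when $G$ is a torus one may take $V$ to run over characters $\rho$, giving the generators $\E_{\resol}(c)\otimes\rho$.

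The only step that is not formal bookkeeping with adjoints is the claim in part (1) that $\mathbb{R}\Gamma_c$ preserves coproducts, and there the content is imported wholesale from the \v{C}ech computation of $\mathbb{R}\Gamma_c$ (Theorem \ref{Cech calculates}) --- one facet of the quasi-properness of $\mathbb{R}\map_*$ advertised in the introduction. One could in principle avoid the \v{C}ech complex here by checking that $(\alg,G,c)-\on{mod}^{uns}$ is generated by noetherian objects, so that the associated torsion radical commutes with filtered colimits and the localization is smashing; but given the tools already developed, the \v{C}ech route is the natural one.
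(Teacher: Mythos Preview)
Your proof of part (2) is essentially the paper's: compose the two adjunctions and apply Lemma~\ref{cg lemma}, using Lemma~\ref{weakly cg} for compact generation of the weak category and part (1) for continuity of the right adjoint. You have simply unpacked the content of Lemma~\ref{cg lemma} in detail.

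For part (1), however, you and the paper take opposite routes --- and amusingly, the alternative you mention in your closing paragraph \emph{is} the paper's argument. The paper shows that $(\alg,G,c)\text{-}\on{mod}$ is locally noetherian (the objects $\Phi_c(\alg\otimes V)$ are noetherian generators, since the $\alg\otimes V$ are and $\sapc$ is faithful exact), and then invokes Gabriel's result that the section functor from a quotient of a locally noetherian category by a localizing subcategory commutes with filtered colimits; passage to the unbounded derived level is handled by \cite[Proposition~15.3.3]{KS}. Your route instead imports Theorem~\ref{Cech calculates} to identify $\mathbb{R}\Gamma_c\circ\pi_c$ with the finite \v{C}ech complex, which visibly preserves coproducts.

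Your \v{C}ech argument is morally correct but has one loose thread: Theorem~\ref{Cech calculates} is stated and proved only for $M$ in the \emph{abelian} category $(\alg,G,c)\text{-}\on{mod}$, whereas you apply it to complexes $N$ (the $N_i$ with $\pi_c(N_i)\simeq x_i$ are genuinely objects of the derived category). Extending the identification $\mathbb{R}\Gamma_c\pi_c\simeq\check{C}^\bullet$ to unbounded complexes is routine --- both sides have finite cohomological amplitude (the \v{C}ech complex has finitely many terms), so the isomorphism on the heart propagates to $D^b$ by d\'evissage and then to $D$ by truncation --- but it is not literally contained in the cited theorem, and you should say a word about it. The paper's locally-noetherian route avoids this bookkeeping entirely, at the cost of invoking Gabriel's theorem as a black box; your route is more hands-on and makes the finite cohomological dimension of $\mathbb{R}\Gamma_c$ explicit, which is independently useful.
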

\begin{proof}
The only part of (1) that requires proof is that $\mathbb{R}\Gamma_c$ preserves colimits.  To see this, first observe that 
$(\alg,G)-\on{mod}$ is locally noetherian, generated by the noetherian objects $\alg\otimes V$ for finite-dimensional representations $V$ of $G$.  Using the adjunction $(\Phi_c, \sapc)$, one finds that the objects $\Phi_c(\alg\otimes V)$ are noetherian and generate 
$(\alg, G, c)-\on{mod}$, so the latter category is locally noetherian.  
Part (1) then follows from Corollaire 1, Section III.4 of \cite{Gabriel} and \cite[Proposition 15.3.3]{KS}.

For part (2), apply Lemma \ref{cg lemma} to the adjoint pair $(\pi_c\circ \mathbb{L}\Phi_c, \sapc\circ\mathbb{R}\Gamma_c)$, using part (1) for continuity of the right adjoint and Lemma \ref{weakly cg} for compact generation of $D((\alg,G)-\on{mod})$.
\end{proof}

We will be interested below in some particular compact objects, namely the ones 
$\E_{\resol}(c)\otimes\rho\overset{\on{def}}{=} \pi_c\, M_c(\rho)$ induced from line bundles.  Note that, by Lemma \ref{enough proj}, $\alg\otimes\rho$ is projective in $(\alg,G)-\on{mod}$; hence 
\begin{equation}\label{underived equals derived}
\pi_c\, M_c(\rho) = \pi_c\Phi_c(\alg\otimes\rho) = \pi_c\circ\mathbb{L}\Phi_c(\alg\otimes\rho).
\end{equation}

\begin{lemma}\label{computing twisted}
For any character $\rho:G\rightarrow\Gm$ and any $M\in D(\E_{\resol}(c))$, we have 
\bd
\mathbb{R}\Hom_{\E_{\resol}(c)}(\E_{\resol}(c)\otimes\rho, M) \cong \Hom_G\big(\rho, \mathbb{R}\Gamma_c(M)\big).
\ed
\end{lemma}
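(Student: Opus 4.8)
The plan is to unwind both sides through the adjunctions established above and reduce everything to a computation with the module $M_c(\rho)$ and the twisted invariants functor. First I would use the chain of adjoint pairs $(\pi_c \circ \mathbb{L}\Phi_c, \sapc \circ \mathbb{R}\Gamma_c)$ together with the identification \eqref{underived equals derived} to write
\bd
\mathbb{R}\Hom_{\E_{\resol}(c)}(\E_{\resol}(c)\otimes\rho, M) = \mathbb{R}\Hom_{\E_{\resol}(c)}\big(\pi_c\circ\mathbb{L}\Phi_c(\alg\otimes\rho), M\big) \cong \mathbb{R}\Hom_{(\alg,G)}\big(\alg\otimes\rho, \sapc\mathbb{R}\Gamma_c(M)\big).
\ed
Then I would observe that $\alg\otimes\rho$ is the free weakly $G$-equivariant module on the one-dimensional representation $\rho$, so that $\mathbb{R}\Hom_{(\alg,G)}(\alg\otimes\rho, N) = \mathbb{R}\Hom_{(\alg,G)}(\alg, N\otimes\rho^{-1}) = \mathbb{R}\Hom_G(\C, N\otimes\rho^{-1}) = (N\otimes\rho^{-1})^G = N^{G,\rho}$ for any weakly $G$-equivariant complex $N$; this uses exactness of the invariants functor for $G$ reductive so that there is no higher cohomology and no need for a derived functor on the $G$-side. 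Applying this with $N = \sapc\mathbb{R}\Gamma_c(M)$ and noting $\sapc$ is just the forgetful functor (so invariants are unchanged) yields exactly $\mathbb{R}\Gamma_c(M)^{G,\rho}$.

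The key steps in order: (1) rewrite $\E_{\resol}(c)\otimes\rho$ as $\pi_c\circ\mathbb{L}\Phi_c(\alg\otimes\rho)$ via \eqref{underived equals derived}; (2) apply the adjunction $(\pi_c, \mathbb{R}\Gamma_c)$ to move $M$ to the right and land in $D((\alg,G,c)-\on{mod})$; (3) apply the adjunction $(\mathbb{L}\Phi_c, \sapc)$ to move to $D((\alg,G)-\on{mod})$, leaving $\mathbb{R}\Hom_{(\alg,G)}(\alg\otimes\rho, \sapc\mathbb{R}\Gamma_c M)$; (4) identify $\Hom_{(\alg,G)}(\alg\otimes\rho, -)$ with the $\rho$-twisted invariants functor $(-)^{G,\rho}$, observing that $\alg\otimes\rho$ is projective (Lemma \ref{enough proj}) so the $\mathbb{R}\Hom$ collapses to the underived $\Hom$ applied termwise to an appropriate replacement, and that this $\Hom$ computes $(- \otimes \rho^{-1})^G$; (5) note $\sapc$ does not change underlying $\alg$-modules or the $G$-action, so the twisted invariants of $\sapc\mathbb{R}\Gamma_c(M)$ agree with those of $\mathbb{R}\Gamma_c(M)$.

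The main obstacle I expect is bookkeeping at the derived level in step (4): one must be careful that $\mathbb{R}\Hom_{(\alg,G)}(\alg\otimes\rho, -)$ really does compute the honest (underived) twisted invariants, i.e. that there is no contribution from higher $\Ext$ in the category $(\alg,G)-\on{mod}$. This is where projectivity of $\alg\otimes\rho$ in $(\alg,G)-\on{mod}$ (Lemma \ref{enough proj}) does the work: since $\alg\otimes\rho$ is a compact projective generator-type object, $\mathbb{R}\Hom$ out of it is computed degreewise, and on each term the functor $\Hom_{(\alg,G)}(\alg\otimes\rho, -)$ is exactly $(-\otimes\rho^{-1})^G$, which is exact because $G$ is reductive. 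A secondary, purely cosmetic point is to make sure the left $G$-action conventions are consistent so that twisting by $\rho$ on the source corresponds to the $\rho$-isotypic (rather than $\rho^{-1}$-isotypic) piece in the notation $(-)^{G,\rho}$; this is just a matter of tracing the conventions fixed in Section \ref{preliminaries}, and introduces no real difficulty.
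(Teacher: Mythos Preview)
Your proposal is correct and follows essentially the same approach as the paper: both use \eqref{underived equals derived} to rewrite $\E_{\resol}(c)\otimes\rho$ as $\pi_c\circ\mathbb{L}\Phi_c(\alg\otimes\rho)$, then pass through the adjunctions $(\pi_c,\mathbb{R}\Gamma_c)$ and $(\mathbb{L}\Phi_c,\sapc)$ to reduce to $\mathbb{R}\Hom_{(\alg,G)}(\alg\otimes\rho,\mathbb{R}\Gamma_c(M))$, which is identified with $\mathbb{R}\Gamma_c(M)^{G,\rho}$. Your write-up is simply more explicit about the derived bookkeeping (projectivity of $\alg\otimes\rho$, exactness of invariants for reductive $G$) than the paper's terse version.
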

\begin{proof}
Using the adjunctions described above, we have
\begin{multline*}
\mathbb{R}\Hom_{\E_{\resol}(c)}(\E_{\resol}(c)\otimes\rho, M) 
= \mathbb{R}\Hom(\pi_c\circ\mathbb{L}\Phi_c(\alg\otimes\rho), M)\\
 \cong
 \mathbb{R}\Hom_{(\alg, G)}\big(\alg\otimes\rho, \mathbb{R}\Gamma_c(M)\big)
 = \Hom_G\big(\rho, \mathbb{R}\Gamma_c(M)\big)
 \end{multline*}
 as desired, where the first equality follows from \eqref{underived equals derived}.
 \end{proof}

\subsection{Indecomposability of $D(\E_{\resol}(c))$}

\begin{prop}\label{indecomposable}
The microlocal derived category $D(\E_{\resol}(c))$ and the bounded subcategory 
$D^b(\E_{\resol}(c))$ are
 indecomposable.
\end{prop}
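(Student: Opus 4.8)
The plan is to show that $D(\E_{\resol}(c))$ has no nontrivial idempotent in its "ring of endofunctors" -- equivalently, that the unit object $\E_{\resol}(c)$ cannot be written as a direct sum of two nonzero objects in a way compatible with the triangulated structure, so the category admits no orthogonal decomposition $D = D_1 \oplus D_2$ with both factors nonzero. A decomposition of a compactly generated triangulated category is detected on the level of the unit (or, more robustly, on $\mathbb{R}\Gamma_c$ of the unit and its self-$\Hom$s): if $D(\E_{\resol}(c)) = D_1\oplus D_2$, then writing $\E_{\resol}(c) = E_1\oplus E_2$ accordingly, one gets a decomposition of the graded ring $\bigoplus_i \Hom_{D(\E_{\resol}(c))}(\E_{\resol}(c), \E_{\resol}(c)[i])$ into a product of two rings, hence a nontrivial idempotent in $\Hom(\E_{\resol}(c),\E_{\resol}(c))$.

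\textbf{Reduction to a connectedness statement.} By Lemma \ref{computing twisted} with $\rho$ trivial, $\mathbb{R}\Hom_{\E_{\resol}(c)}(\E_{\resol}(c),\E_{\resol}(c)) \cong \mathbb{R}\Gamma_c(\E_{\resol}(c))^G$, and by Lemma \ref{Gamma of D} this is computed by $\check{C}^\bullet(\alg)\otimes_{U(\mathfrak{g})} C(\mathfrak{g})$, after taking $G$-invariants. The degree-zero part of this, i.e. $H^0$, should recover $U_c$ itself (the \v{C}ech complex computes sections, and in cohomological degree $0$ one gets the global functions on the semistable locus, whose $\mu_c(\mathfrak{g})$-coinvariant $G$-invariants are $U_c$ -- this uses Assumption \ref{assumptions}(3), that $\mathbb{R}f_*\theo_{\resol} = \theo_X$). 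So $\End_{D(\E_{\resol}(c))}(\E_{\resol}(c))$ has $H^0$ equal to $U_c$, and a decomposition of the category would force a nontrivial central idempotent of $U_c$. Thus the crux is: \emph{$U_c$ has no nontrivial central idempotents}, together with the statement that any decomposition of $D(\E_{\resol}(c))$ is actually controlled by $\End(\E_{\resol}(c))$ via its unit object structure.

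\textbf{Why $U_c$ is connected, i.e. has no idempotents.} Here I would use the filtration: $\gr U_c \cong \C[X] = \C[\mu\inv(0)]^G$ (as recorded just before Lemma \ref{Gorenstein}). Since $X$ is a variety -- in particular irreducible by the Basic Assumption that all varieties are connected, and it is a GIT quotient of the irreducible $\mu\inv(0)$ (irreducible since $\mu$ is flat and $\bigvar$ connected smooth, giving $\mu\inv(0)$ a complete intersection of pure dimension; one may need $\bigvar$ irreducible, which holds as it's smooth connected) -- the ring $\C[X]$ is a domain, hence has no nontrivial idempotents. A nontrivial idempotent $e \in U_c$ would have a symbol $\gr(e)\in \gr U_c = \C[X]$ in some filtration degree; choosing $e$ of minimal filtered degree among $\{e, 1-e\}$ after adjusting, one argues its symbol is a nonzero idempotent of $\C[X]$ in degree $0$, forcing $\gr(e) \in \{0,1\}$, hence $e\in\{0,1\}$. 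The key point to make rigorous is that an idempotent in a filtered ring has symbol of degree $0$: from $e^2 = e$ one gets $\sigma(e)^2 = \sigma(e)$ in $\gr U_c$ if $\sigma(e)$ doesn't drop degree in squaring, and the only degree in which a nonzero element squares to itself (rather than to a strictly higher-degree element, which can't happen as the top-degree term of $e^2$ is $\sigma(e)^2$) is degree $0$.

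\textbf{From $U_c$ connected to $D$ indecomposable, and the main obstacle.} The argument linking $\End(\E_{\resol}(c))$ to decompositions of the whole category is that $D(\E_{\resol}(c))$ is compactly generated (Proposition \ref{compact generation}) and $\E_{\resol}(c)$ together with its twists $\E_{\resol}(c)\otimes\rho$ form a generating set; a decomposition $D = D_1\oplus D_2$ splits \emph{every} object, in particular all the $\E_{\resol}(c)\otimes\rho$, compatibly, and one shows using Lemma \ref{computing twisted} and the fact that $\mathbb{R}\Gamma_c$ is faithful-enough on the relevant objects that the induced idempotent already lives in $H^0\End(\E_{\resol}(c)) = U_c$. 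I expect the \textbf{main obstacle} to be exactly this last bookkeeping: carefully arguing that a direct-sum decomposition of the \emph{triangulated/dg} category induces a decomposition of the unit $\E_{\resol}(c)$ as an algebra object (so that the idempotent is central in $U_c$, not merely a module-level idempotent), and handling the fact that $\End$ is a complex rather than a ring -- one must pass to $H^0$ and check the idempotent lifts/descends correctly, or alternatively argue directly with the abelian category $\E_{\resol}(c)-\on{mod}$ that it is connected (no nontrivial decomposition as a product of abelian categories) because its "global sections" ring in degree $0$ is the domain-adjacent ring $U_c$. For $D^b(\E_{\resol}(c))$ one notes indecomposability passes between $D$ and $D^b$ since $D^b$ sits inside $D$ as the compact-ish/bounded objects and a decomposition of either restricts/extends to the other via the compact generators, which lie in $D^b$.
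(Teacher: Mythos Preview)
Your argument has a genuine gap at the step where you pass from ``$U_c$ has no nontrivial idempotents'' to ``$D(\E_{\resol}(c))$ is indecomposable.'' The issue is that $\E_{\resol}(c)$ is \emph{not known to generate} $D(\E_{\resol}(c))$ at this point in the development --- indeed, that would essentially be the main theorem of the paper, and Proposition~\ref{indecomposable} is an ingredient in its proof. If $\E_{\resol}(c)$ fails to generate, a decomposition $D = D_1\oplus D_2$ could simply place $\E_{\resol}(c)$ entirely in $D_1$ (so the induced idempotent in $\End(\E_{\resol}(c))$ is trivial) while $D_2$ remains nonzero. Your sentence ``the induced idempotent already lives in $H^0\End(\E_{\resol}(c)) = U_c$'' is exactly the step that does not go through without further input.

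You correctly identify that one should instead use the family of twists $\E_{\resol}(c)\otimes\chi^\ell$, but then two nontrivial facts are needed, both of which you assert without justification: first, that these twists alone generate (Proposition~\ref{compact generation} only gives generation by \emph{all} $\pi_c\mathbb{L}\Phi_c(\alg\otimes V)$, not just the rank-one twists); second, that any two such twists are linked by a nonzero Hom in some degree, so that they cannot land in different summands. The paper establishes both of these via a Rees-module/degeneration argument: one filters the relevant objects, passes to associated graded, and uses that the $\chi$-line bundle on $\resol$ is ample to get the needed nonvanishing and surjectivity statements on the commutative side, then lifts back via Nakayama over $\cR(U_c)$. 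Your filtration argument that $U_c$ (a filtered ring with domain associated graded) has no nontrivial idempotents is correct and is essentially how the paper shows each $\E_{\resol}(c)\otimes\chi^\ell$ is indecomposable as an object --- but that is only one of three ingredients, and the other two require the ampleness input that your proposal omits.
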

\begin{proof}
Note that by Lemma \ref{cg lemma} and part $(1)$ of Proposition \ref{compact generation}, the functors $\pi_c$ and $\mathbb L\Phi_c$ both take compact objects to compact objects. It is then easy to see that that the Proposition follows if we can find a collection $S$ of compact objects of $D((\alg,G)-\on{mod})$  for which
\begin{enumerate}
\item[(a)] For each $s\in S$, the object $\pi_c\circ \mathbb{L}\Phi_c(s)$ is indecomposable and lies in $D^b(\E_{\resol}(c))$.
\item[(b)] For all $t, t'\in \pi_c\circ \mathbb{L}\Phi_c(S)$, there is some $i\in\mathbb{Z}$ for which 
\bd
\on{Hom}^i_{D(\E_{\resol}(c))}(t,t') \neq 0 \;\;\; \text{or} \;\;\; \on{Hom}^i_{D(\E_{\resol}(c))}(t',t) \neq 0.
\ed
\item[(c)] The set $\pi_c\circ \mathbb{L}\Phi_c(S)$ generates $D(\E_{\resol}(c))$.
\end{enumerate}
Consider the object $L =\alg\otimes \chi^\ell$ given by twisting the trivial module by a power of the character $\chi$.  The induced $\E_{\resol}(c)$-module is $\E_{\resol}(c)\otimes \chi^\ell$; by \eqref{twist calc} this is isomorphic to $\pi_c M_{c-\ell d\chi}\otimes \chi^\ell$, hence,
by Lemma \ref{computing twisted} and Lemma \ref{basics of H}(3),
$\End_{\cE_\resol(c)}(\E_{\resol}(c)\otimes \chi^\ell) \cong U_{c-\ell d\chi}$.  Since $\on{gr}\, U_{c-\ell d\chi} \cong \C[X]$, the algebra 
$U_{c-\ell d\chi}$ is a domain.  But this implies that $\cE_\resol(c)\otimes \chi^\ell$ is indecomposable: a nontrivial direct sum decomposition of $\cE_\resol(c)\otimes\chi^\ell$ would yield a pair of (nonzero) endomorphisms, namely the projections on the two direct factors, whose composite is zero, which cannot happen if the endomorphism ring is a domain.

Let $\mathscr{L}$ denote the line bundle on $\resol$ associated to the character $\chi$.  Choose an integer $N$ for which $\mathscr{L}^N$ is very ample, and let $S = \{\alg\otimes \chi^{aN}[n] \; |\; a, n\in \mathbb{Z}\}$.  By the previous paragraph, $S$ satisfies requirement (a). 
For requirement (b), we use \v{C}ech resolutions as in Section \ref{proof of serre}.  Observe:

\bd
\begin{aligned}
\mathbb{R}\Hom\big(\E_{\resol}(c)\otimes\chi^{aN}, \E_{\resol}(c)\otimes\chi^{bN}\big)
= & \; \Hom_G\big(\chi^{aN}, \mathbb{R}\Gamma_c\big(\pi_c(M_c(\chi^{bN}))\big)\big)&\text{by Lemma \ref{computing twisted}}\\
= & \; \Hom_G\big(\chi^{aN}, \check{C}^\bullet(M_c(\chi^{bN}))\big)  &\text{by Theorem \ref{Cech calculates}}.\\
\end{aligned}
\ed
Giving $M_c(\chi^{bN})$ the filtration induced from $\alg$, we get
 \bd
\Hom_G\big(\chi^{aN}, H^0\big(\check{C}^\bullet\big(\on{gr}\; M_c(\chi^{bN})\big)\big)\big)
\cong 
\Hom_G\big(\chi^{aN}, H^0\big(\check{C}^\bullet(\mathscr{L}^{bN})\big)\big)
\cong
H^0(\resol, \mathscr{L}^{bN-aN}).
\ed
The latter space is nonzero for $b-a\geq 0$ by very ampleness of $\mathscr{L}$.  Now 
Corollary \ref{gr commutes with H^0}  implies that 
$\on{gr}\; \Hom_G\big(\chi^{aN}, \check{C}^\bullet(M_c(\chi^{bN}))\big) \neq 0$, 
 thereby establishing (b).

If $M$ is any object of $(\alg,G,c)-\on{mod}$, then by Lemma \ref{computing twisted} we have
\begin{equation}\label{sectionsAGc}
\mathbb{R}\Hom\big(\E_{\resol}(c)\otimes\chi^{aN}, \pi_c M\big) 
= \Hom_G(\chi^{aN}, \mathbb{R}\Gamma_c\big(\pi_c(M)\big). 
\end{equation}
For any complex $x$ in $\cE_{\resol}(c)-\on{mod}$, let $\wt{x} = \mathbb{R}\Gamma_c(x)$ (a complex in $(\alg,G,c)-\on{mod}$); then 
$\pi_c(\wt{x}) = x$.  
Now Theorem \ref{Cech calculates} shows that $\mathbb{R}\Gamma_c\big(\pi_c(\wt{x})\big)\simeq \check{C}^\bullet(\wt{x})$, and hence by reductivity of $G$ that 
\bd
\mathbb{R}\Hom\big(\E_{\resol}(c)\otimes\chi^{aN}, x\big) \simeq \Hom_G\big(\chi^{aN},\check{C}^\bullet(\wt{x})\big).
\ed
Now, assume $x\not\simeq 0$; then there exists an $i$ such that, writing $x$ as $\dots \rightarrow x^{i-1} \xrightarrow{d^{i-1}} x^i \xrightarrow{d^i} x^{i+1}\rightarrow\dots$, we have $\mathcal{H}^i(x) = \ker(d^i)/\on{im}(d^{i-1}) \neq 0$.  By exactness of $\pi_c$, it follows that $\mathcal{H}^i(\wt{x}) \neq 0$.  Choose a finitely generated subobject $M\subseteq \mathcal{H}^i(\wt{x})$ in $(\alg,G,c)-\on{mod}$ such that 
$0\neq \pi_c(M) \subseteq \pi_c\big(\mathcal{H}^i(\wt{x})\big) = \mathcal{H}^i(x)$.  Let $N$ be the preimage of $M$ in $\ker(\wt{d}^i) \subseteq \wt{x}^i$, and choose a finitely generated $N'\subseteq N$ such that $N'$ surjects onto $M$ under 
$\ker(\wt{d}^i)\twoheadrightarrow \mathcal{H}^i(\wt{x})$.  
\begin{claim}
For $a\gg 0$, the natural map 
\bd
\Hom\big(\E_{\resol}(c)\otimes\chi^{-aN}, \pi_c N'\big)\longrightarrow
\Hom\big(\E_{\resol}(c)\otimes\chi^{-aN}, \pi_c M\big)
\ed
is nonzero.
\end{claim}
If the claim is true, there are maps 
\bd
\E_{\resol}(c)\otimes\chi^{-aN}[-i]\rightarrow \pi_c N'[-i]\rightarrow x
\ed
and the composite induces a nonzero map on $\mathcal{H}^i$, namely $\E_{\resol}(c)\otimes\chi^{-aN}\rightarrow \pi_cM\subseteq \mathcal{H}^i(x)$. 
Hence, if the claim is true then statement (c) above is true, and the proposition is proven.  
\begin{proof}[Proof of Claim.]
Observe that the desired map of Homs is computed by 
\begin{equation}\label{Cech N'}
\Hom_G\big(\chi^{aN}, H^0(\check{C}^\bullet(N'))\big) \rightarrow
\Hom_G\big(\chi^{aN}, H^0(\check{C}^\bullet(M)\big).
\end{equation}
Equip $N'$ with a good filtration as an object of  $(\alg,G,c)-\on{mod}$, and give $M$ the induced filtration as a quotient, so $\on{gr}(N')\rightarrow \on{gr}(M)$ is surjective.  Then 
\begin{equation}\label{surj}
\Hom_G\big(\chi^{aN}, H^0(\check{C}^\bullet(\on{gr}(N'))\big) \rightarrow
\Hom_G\big(\chi^{aN}, H^0(\check{C}^\bullet(\on{gr}(M))\big) \;
\text{is surjective for $a \ll 0$}
\end{equation}
 by ampleness of $\mathscr{L}$; fix such an $a$.  
  As in Corollary \ref{gr commutes with H^0}, for any $(\alg, G)$-module $P$ with good filtration,
 \bd
 \on{gr}\big(H^0(\check{C}^\bullet P)^{G}\big) \cong H^0((\check{C}^\bullet(\on{gr}\, P)\big)
\ed
functorially;
thus \eqref{surj} implies that \eqref{Cech N'} is surjective on the level of associated graded modules.  It follows from 
\eqref{gr M=0 implies M=0} that \eqref{Cech N'} is surjective; since the target module has nonzero associated graded for $a \gg 0$ by ampleness of $\mathscr{L}$, the claim follows.
\end{proof}
\noindent
As noted, this proves the proposition.
\end{proof}

\section{Direct and Inverse Image Functors and Serre Duality}
In this section we define the functors that we use to relate the microlocal category with the derived category of $U_c$-modules.  We also state a version of Serre Duality in the quantum setting that will allow us to establish 
derived equivalences later.   As in Section \ref{section 4}, we assume throughout that $G$ is connected reductive.
\subsection{Direct and Inverse Image Functors}
Let $f: \resol\rightarrow X$ denote the morphism from the GIT quotient to the affine quotient.  We define a quantization of the functor $f^*$ as follows.  
We define
\bd
\on{Loc}: U_c-\on{mod}\rightarrow (\alg,G, c)-\on{mod}
\ed
by 
$\on{Loc}(N) = M_c\otimes_{U_c}N$
(by Lemma \ref{twisted equivariant modules}, the functor $\on{Loc}$ takes values in $(\alg,G,c)-\on{mod}$).  
Recall that $\pi_c$ denotes the quotient functor $(\alg,G,c)-\on{mod} \rightarrow \E_{\resol}(c)-\on{mod}$. 
We then let $\map^* = \pi_c\circ \on{Loc}: U_c-\on{mod}\rightarrow \cE_{\resol}(c)-\on{mod}$.  
 
\begin{lemma}\label{lower *}
\mbox{}
\begin{enumerate}
\item We have $\mathbb{L}\map^* = \pi_c\circ\mathbb{L}\on{Loc}$.
\item The composite $\map_* \overset{\on{def}}{=} \mathbb{H}_c\circ \Gamma_c$ is right adjoint to $\map^*$, and $\mathbb{R}\map_* \overset{\on{def}}{=} \mathbb{H}_c\circ \mathbb{R}\Gamma_c$ is right adjoint to $\mathbb{L}\map^*$.
\item $\mathbb{R}\map_* = \mathbb{R}\Hom_{\E_{\resol}(c)-\on{mod}}(\E_{\resol}(c), -)$.
\item The functor $\mathbb{R}\map_*$ preserves colimits.
\end{enumerate}
\end{lemma}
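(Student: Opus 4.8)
The plan is to get parts (1)--(3) formally out of the adjunctions already assembled, with part (4) carrying the only real content. For (1), since $\pi_c$ is exact it is its own left derived functor, so computing $\mathbb{L}\map^*$ by means of a flat (or projective) resolution $P^\bullet\to N$ of a $U_c$-module gives $\mathbb{L}\map^*(N)\simeq\pi_c(M_c\otimes_{U_c}P^\bullet)=\pi_c(\mathbb{L}\on{Loc}(N))$, i.e.\ $\mathbb{L}\map^*=\pi_c\circ\mathbb{L}\on{Loc}$.

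For (2), I would first note that $\on{Loc}=M_c\otimes_{U_c}(-)\colon U_c-\on{mod}\to(\alg,G,c)-\on{mod}$ has right adjoint $\mathbb{H}_c$: by \eqref{invariants and coinvariants} and adjoint associativity, $\Hom_{(\alg,G,c)}(M_c\otimes_{U_c}N,M)\cong\Hom_{U_c}(N,\mathbb{H}_c(M))$. Composing the adjoint pair $(\on{Loc},\mathbb{H}_c)$ with $(\pi_c,\Gamma_c)$ shows $\map_*=\mathbb{H}_c\circ\Gamma_c$ is right adjoint to $\map^*$. On the derived level $\mathbb{L}\on{Loc}$ is left adjoint to $\mathbb{R}\mathbb{H}_c$, and $\mathbb{R}\mathbb{H}_c=\mathbb{H}_c$ since $\mathbb{H}_c$ is exact (Lemma \ref{basics of H}(1)); composing with the derived pair $(\pi_c,\mathbb{R}\Gamma_c)$ and using (1), $\mathbb{R}\map_*=\mathbb{H}_c\circ\mathbb{R}\Gamma_c$ is right adjoint to $\mathbb{L}\map^*$. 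For (3), I would apply Lemma \ref{computing twisted} with $\rho$ the trivial character: since then $\E_{\resol}(c)\otimes\rho=\E_{\resol}(c)$, it reads $\mathbb{R}\Hom_{\E_{\resol}(c)}(\E_{\resol}(c),M)\cong\mathbb{R}\Gamma_c(M)^{G}$, and by Lemma \ref{basics of H}(2) (and the fact that $\mathbb{R}\Gamma_c M$ is termwise $(G,c)$-equivariant) the right-hand side equals $\mathbb{H}_c(\mathbb{R}\Gamma_c M)=\mathbb{R}\map_*(M)$.

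For (4) I would argue from (3). The crucial point is that $\E_{\resol}(c)=\pi_c M_c$ is a compact object of $D(\E_{\resol}(c))$; this is noted in Section \ref{adjunctions}, via \eqref{underived equals derived} with trivial $\rho$, which gives $\E_{\resol}(c)=\pi_c\circ\mathbb{L}\Phi_c(\alg)$: the free module $\alg$ is compact in $D((\alg,G)-\on{mod})$ by Lemma \ref{weakly cg}, the right adjoint $\sapc\circ\mathbb{R}\Gamma_c$ of $\pi_c\circ\mathbb{L}\Phi_c$ preserves colimits by Proposition \ref{compact generation}(1), and so $\pi_c\circ\mathbb{L}\Phi_c$ takes compact objects to compact objects by Lemma \ref{cg lemma}(1). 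Compactness of $\E_{\resol}(c)$ says exactly that $\mathbb{R}\Hom_{\E_{\resol}(c)}(\E_{\resol}(c),-)$ commutes with coproducts; the coproduct isomorphism is $U_c$-linear because the $U_c$-action on $\mathbb{R}\map_*$ comes from the $(\alg,U_c)$-bimodule structure on $M_c$ (equivalently, from $U_c\to\End_{\E_{\resol}(c)}(\E_{\resol}(c))$), so $\mathbb{R}\map_*\colon D(\E_{\resol}(c))\to D(U_c)$ preserves coproducts. Finally, a triangulated functor between triangulated categories with small coproducts that preserves coproducts preserves all small colimits (Lemma \ref{colimits}), which gives (4).

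The hard part is not conceptual but a matter of care: one must check that each ``underived $=$ derived'' reduction is valid (exactness of $\pi_c$ and of $\mathbb{H}_c$, compactness of $\E_{\resol}(c)$) and that the $U_c$-module structures are respected at each stage. A route to (4) that sidesteps (3): write $\mathbb{R}\map_*=\mathbb{H}_c\circ\mathbb{R}\Gamma_c$; the functor $\mathbb{R}\Gamma_c$ preserves colimits by Proposition \ref{compact generation}(1), and $\mathbb{H}_c$---exact, and equal to $M\mapsto M^G$ for $G$ connected reductive by Lemma \ref{basics of H}(2)---commutes with arbitrary direct sums and filtered colimits, hence with all small colimits, so the composite does as well.
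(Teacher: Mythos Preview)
Your proof is correct and follows essentially the same approach as the paper: exactness of $\pi_c$ for (1), formal composition of adjunctions for (2), the $(\pi_c,\mathbb{R}\Gamma_c)$ adjunction together with the identification $\mathbb{H}_c=\Hom_{(\alg,G,c)}(M_c,-)$ for (3), and compactness of $\E_{\resol}(c)=\pi_c M_c$ (via Proposition~\ref{compact generation}) combined with (3) for (4). Your argument is more explicit than the paper's terse version---and the alternative route you give for (4) via $\mathbb{R}\map_*=\mathbb{H}_c\circ\mathbb{R}\Gamma_c$ is a nice bonus---but there is no substantive difference in strategy.
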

We remark that a functor of abelian categories whose domain category is a Grothendieck category is known to admit right derived functors between unbounded derived categories; and a functor of abelian categories whose domain category has enough projectives is known to admit left derived functors between unbounded derived categories.  Hence $\mathbb{L}\map^*$ and $\mathbb{R}\map_*$ are defined.  
\begin{proof}[Proof of Lemma \ref{lower *}]
(1) follows from exactness of $\pi_c$; (2) is a formal consequence of (1).  
For (3), we observe that
\bd
\mathbb{R}\Hom_{\E_{\resol}(c)}(\E_{\resol}(c), N) = \mathbb{R}\Hom_{\E_{\resol}(c)}(\pi_c(M_c), N)
=  \mathbb{R}\Hom_{(\alg, G, c)}(M_c, \mathbb{R}\Gamma_c(N))
\ed
and apply statement (2) and the definition of $\mathbb{H}_c$.
As in Proposition \ref{compact generation}, $\pi_c M_c$ is compact; (4) then follows from (3).  
\end{proof}

\begin{lemma}\label{direct image of D}
We have $\mathbb{R}\map_*\E_{\resol}(c) = U_c$.
\end{lemma}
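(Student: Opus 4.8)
The plan is to compute $\mathbb{R}\map_*\E_{\resol}(c)$ directly from the formula in Lemma \ref{lower *}(3), namely $\mathbb{R}\map_* = \mathbb{R}\Hom_{\E_{\resol}(c)}(\E_{\resol}(c), -)$, so that
\[
\mathbb{R}\map_*\E_{\resol}(c) = \mathbb{R}\Hom_{\E_{\resol}(c)}(\E_{\resol}(c), \E_{\resol}(c)).
\]
Since $\E_{\resol}(c) = \E_{\resol}(c)\otimes\rho$ for the trivial character $\rho = 1$, I would apply Lemma \ref{computing twisted} (with $\rho$ trivial) to get $\mathbb{R}\Hom_{\E_{\resol}(c)}(\E_{\resol}(c), \E_{\resol}(c)) \cong \mathbb{R}\Gamma_c(\E_{\resol}(c))^{G}$. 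Now Lemma \ref{Gamma of D} identifies $\mathbb{R}\Gamma_c(\E_{\resol}(c))$ with the \v{C}ech complex $\check{C}^\bullet(M_c) \simeq \check{C}^\bullet(\alg)\otimes_{U(\mathfrak{g})} C(\mathfrak{g})$, so the problem reduces to computing $\big(\check{C}^\bullet(\alg)\otimes_{U(\mathfrak{g})} C(\mathfrak{g})\big)^{G}$ and showing it is quasi-isomorphic to $U_c$ concentrated in degree $0$.

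The key computational step is to show that $\big(\check{C}^\bullet(M_c)\big)^{G}$ has cohomology only in degree $0$, equal to $U_c$. For this I would invoke Assumption \ref{assumptions}(3), $\mathbb{R}f_*\theo_{\resol} = \theo_X$, at the level of associated graded objects: equip $M_c$ with its standard filtration (as a quotient of $\alg$), so $\on{gr}\,M_c \cong \C[\mu\inv(0)]$, and use the Rees-module comparison (Proposition \ref{Rees Cech}) to relate $H^*\big(\check{C}^\bullet(M_c)\big)^{G}$ to $H^*\big(\check{C}^\bullet(\on{gr}\,M_c)\big)^{G}$. The latter is the sheaf cohomology $\mathbb{R}\Gamma(\resol, \theo_{\resol})$ of the structure sheaf on $\resol$ — indeed $\on{gr}$ of the microlocal picture recovers the GIT quotient $\resol$ — which by Assumption \ref{assumptions}(3) equals $\C[X]^{} = \C[\mu\inv(0)]^{G}$, concentrated in degree $0$. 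A degeneration/Nakayama argument using the Rees algebra $\cR(U_c)$, exactly as in the proof of Proposition \ref{indecomposable}, then transfers the vanishing of higher cohomology and the identification of $H^0$ from the graded picture back to $M_c$ itself, giving $\big(\check{C}^\bullet(M_c)\big)^{G}\simeq \C[\mu\inv(0)]^{G} = U_c$ (using $\mathbb{H}_c(M_c) = U_c$ from Lemma \ref{basics of H}(3) to identify the $H^0$ term on the nose).

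The main obstacle I anticipate is the bookkeeping around the two spectral sequences / filtration comparisons: one must be careful that the \v{C}ech complex $\check{C}^\bullet(\alg)\otimes_{U(\mathfrak{g})}C(\mathfrak{g})$ has a well-behaved good filtration whose associated graded is the commutative \v{C}ech complex computing $\mathbb{R}\Gamma(\resol,\theo_{\resol})$, and that taking $G$-invariants (exact by reductivity) commutes with all of this. Once the filtered/Rees comparison of Proposition \ref{Rees Cech} is in hand together with Corollary \ref{finiteness of direct images} for finite generation over $\cR(U_c)$, the vanishing of $H^{>0}$ follows from the commutative statement $\mathbb{R}f_*\theo_{\resol}=\theo_X$ by the same Nakayama-over-$\C[t]$ argument used in Proposition \ref{indecomposable}, and the identification $H^0 = U_c$ is immediate. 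So the proof is essentially: rewrite via Lemmas \ref{lower *}(3), \ref{computing twisted}, and \ref{Gamma of D}; pass to associated graded; apply Assumption \ref{assumptions}(3); and degenerate back.
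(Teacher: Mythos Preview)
Your proposal is correct and follows essentially the same approach as the paper: both reduce to showing that the natural map $U_c\to\check{C}^\bullet(M_c)^G$ is a quasi-isomorphism, and both do so by passing to the associated graded via Proposition~\ref{Rees Cech} and invoking Assumption~\ref{assumptions}(3), i.e.\ $\mathbb{R}f_*\theo_{\resol}=\theo_X$. The only cosmetic difference is that the paper reaches $\check{C}^\bullet(M_c)^G$ directly via the adjunction $M_c\to\mathbb{R}\Gamma_c\pi_cM_c$ followed by $\mathbb{H}_c$, whereas you route through Lemmas~\ref{lower *}(3), \ref{computing twisted}, and~\ref{Gamma of D}; and the paper's ``isomorphism on associated graded $\Rightarrow$ isomorphism'' step is the standard filtered/spectral-sequence argument rather than the Nakayama-over-$\C[t]$ maneuver you sketch, but either works.
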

\begin{proof}
By adjunction, there is a natural map $M_c\rightarrow \mathbb{R}\Gamma_c\circ\pi_c M_c$.  Applying $\mathbb{H}_c$ gives a natural map $U_c\rightarrow \mathbb{R}\map_*\cE_{\resol}(c)$.  
Theorem \ref{Cech calculates} identifies this map with the natural map $U_c\rightarrow \check{C}^\bullet(M_c)^G$.  
By Proposition \ref{Rees Cech} and Assumption \ref{assumptions}(3) the map becomes an isomorphism
on passing to associated graded modules, yielding the lemma.
\end{proof}

\begin{prop}\label{existence of right adjoint}
The functor $\mathbb{R}\map_*: D(\E_{\resol}(c))\rightarrow D(U_c)$ has a right adjoint $\map^!$.
\end{prop}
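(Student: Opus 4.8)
The plan is to obtain $\map^!$ by invoking Brown representability / the adjoint functor theorem for compactly generated triangulated categories, exactly as in the proof of Proposition \ref{BMR lemma}(1) but now with the roles of the functors shifted one step. Concretely, I would apply Theorem 4.1 of \cite{Neeman} (the existence-of-adjoints theorem) to the functor $\mathbb{R}\map_*: D(\E_{\resol}(c))\rightarrow D(U_c)$. That theorem produces a right adjoint provided the source category $D(\E_{\resol}(c))$ is compactly generated and the functor $\mathbb{R}\map_*$ preserves (small) coproducts.

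The verification of the two hypotheses is essentially already in hand. First, $D(\E_{\resol}(c))$ is compactly generated by Proposition \ref{compact generation}(2). Second, $\mathbb{R}\map_*$ preserves colimits by Lemma \ref{lower *}(4); in particular it preserves coproducts. These are precisely conditions (C1) and (C2) of the general setup in Section \ref{equiv der cat}, now applied to the adjoint pair $(\mathbb{L}\map^*, \mathbb{R}\map_*)$ read with $\mathbb{R}\map_*$ in the role of the ``left'' functor and $D(\E_{\resol}(c))$ in the role of ``$D$''. Thus the first part of Proposition \ref{BMR lemma}, or directly Theorem 4.1 of \cite{Neeman}, yields the desired right adjoint $\map^!: D(U_c)\rightarrow D(\E_{\resol}(c))$ to $\mathbb{R}\map_*$.

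There is essentially no obstacle here: the proof is a one-line application of a cited representability theorem once the two structural inputs (compact generation of the microlocal derived category and continuity of $\mathbb{R}\map_*$) are available, and both of those have been established earlier in the paper. The only point requiring a word of care is the bookkeeping of which category plays which role when quoting \cite{Neeman} — i.e. noting that we are asking for a right adjoint to a coproduct-preserving functor \emph{out of} a compactly generated category, which is exactly the hypothesis format of Theorem 4.1 of \cite{Neeman}. The substantive content — namely that $\map^!$ will later be shown to preserve colimits and to agree with $\mathbb{L}\map^*$ on the generating line bundles $\E_{\resol}(c)\otimes\rho$, which is what ultimately forces the equivalence via Proposition \ref{BMR lemma} — is deferred to subsequent sections and is not needed for the mere existence statement asserted here.
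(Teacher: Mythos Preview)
Your proposal is correct and essentially identical to the paper's own proof: the paper also cites Lemma \ref{lower *}(4) for preservation of coproducts, Proposition \ref{compact generation} for compact generation of $D(\E_{\resol}(c))$, and then invokes \cite[Theorem 4.1]{Neeman} directly.
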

\begin{proof}
By Lemma \ref{lower *}(4), Proposition \ref{compact generation}(2), \cite[Theorem 4.1]{Neeman} implies this.
\end{proof}

\begin{prop}
\label{finiteness of direct images}
If $M$ is a $(G,c)$-equivariant $\alg$-module that is finitely generated over $\alg$, then $\mathbb{R}\map_*\pi_c(M)$ has cohomologies that are finitely generated over $U_c$.  Moreover, choosing a good filtration of $M$, $\mathbb{R}\map_*\pi_c\big(\cR(M)\big)$ has cohomologies that are finitely 
generated over $\cR(U_c)$.  
\end{prop}
\begin{proof}
Choose a good filtration of $M$.  
By Theorem \ref{Cech calculates}(2), Lemma \ref{lower *}(2) and Lemma \ref{basics of H}, it suffices to show that the cohomologies of $\check{C}^\bullet(M)^G$ and $\check{C}^\bullet\big(\cR(M)\big)^G$ are finitely generated over $U_c$, respectively $\cR(U_c)$.   Note that the cohomologies of 
$\check{C}^\bullet(M)^G$ are naturally filtered by the identification $\check{C}^\bullet(M)^G \cong \C[t]/(t-1)\otimes \check{C}^\bullet\big(\cR(M)\big)^G$.
By Theorem 5.7 in Chapter II of \cite{LvO}, we can prove that those cohomologies are finitely generated by proving that they have separated, exhaustive filtrations and that the associated graded modules are finitely generated over $\C[\mu\inv(0)]^G$.  The cohomologies are subquotients of direct sums of $Q_{S_I}^\mu(\alg)\otimes M$.  By Lemma 3.7 and Proposition 3.9 of \cite{AVV}, the induced filtrations on each $Q_{S_I}^\mu(\alg)\otimes M$ are separated and exhaustive.  Hence each cohomology inherits a separated and exhaustive filtration as a subquotient; and it suffices to prove  that the cohomologies of $\check{C}^\bullet(\on{gr}(M))^G$ are finitely generated over $\C[\mu\inv(0)]^G$.

Let $F$ denote the chosen $G$-stable good filtration of $M$.  Since $M$ is $(G,c)$-equivariant, its singular support lies in $\mu\inv(0)$.  Thus
$\on{gr}_F(M)$ is a $G$-equivariant finitely generated $\C[\mu\inv(0)]$-module.  Let $p: \mu\inv(0)^{ss}\rightarrow \resol$ denote the GIT quotient map; then
$\mathsf{M} = p^G_*\big(M|_{\mu\inv(0)^{ss}}\big)$ is a coherent $\theo_{\resol}$-module.  Since the complex
$\check{C}^\bullet(\on{gr}(M))^G$ represents $\mathbb{R}f_*\on{gr}(\mathsf{M})$ and $f$ is a projective morphism, the complex has coherent cohomologies.  
This proves the final statement of the previous paragraph, hence the proposition.
\end{proof}

Given a ring or sheaf of rings $R$, we write $\on{sPerf}(R)$ for the {\em strictly perfect derived category} of $R$, i.e. the full triangulated (or dg if one wants to work with enhancements) subcategory of the bounded derived category consisting of objects quasi-isomorphic to bounded complexes of {\em free} $R$-modules.   We write $\on{Perf}(R)$ for the {\em perfect derived category} of $R$, the smallest full (triangulated or dg) subcategory of the 
bounded derived category containing $R$ and closed under retracts.
One has
the following easy equivalences:
\begin{prop}
The functors $\mathbb{L}\map^*$, $\mathbb{R}\map_*$ induce adjoint pairs of functors 
\bd
\mathbb{L}\map^*: \on{sPerf}(U_c) \leftrightarrows \on{sPerf}(\E_{\resol}(c)): \mathbb{R}\map_*,
\ed
\bd
\mathbb{L}\map^*: \on{Perf}(U_c) \leftrightarrows \on{Perf}(\E_{\resol}(c)): \mathbb{R}\map_*.
\ed
Moreover, these functors define quasi-inverse equivalences between the two (strictly) perfect derived categories.  
\end{prop}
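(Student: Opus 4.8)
The plan is to establish the claimed adjunctions and equivalences by reducing everything to the single fact that $\mathbb{R}\map_*\E_{\resol}(c) = U_c$ (Lemma \ref{direct image of D}), together with the basic properties of $\mathbb{L}\map^*$ and $\mathbb{R}\map_*$ already recorded. The key observations are: (i) $\mathbb{L}\map^*(U_c) = \E_{\resol}(c)$, which follows from the definition $\mathbb{L}\map^* = \pi_c\circ \mathbb{L}\on{Loc}$ (Lemma \ref{lower *}(1)) since $\on{Loc}(U_c) = M_c\otimes_{U_c}U_c = M_c$ and $\pi_c(M_c) = \E_{\resol}(c)$; and (ii) the unit $1\to \mathbb{R}\map_*\circ\mathbb{L}\map^*$ is an isomorphism on $U_c$, by (i) and Lemma \ref{direct image of D}. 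The adjunction $(\mathbb{L}\map^*,\mathbb{R}\map_*)$ is already available from Lemma \ref{lower *}(2), so the only substantive point is that these functors preserve the subcategories in question and restrict to equivalences there.

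First I would treat the strictly perfect case. The subcategory $\on{sPerf}(R)$ is by definition the smallest full triangulated subcategory of $D(R)$ containing $R$ and closed under shifts, cones, and (finite) extensions; equivalently, it is the triangulated hull of $\{R[n] : n\in\mathbb{Z}\}$. Since $\mathbb{L}\map^*$ is exact and sends the generator $U_c$ to the generator $\E_{\resol}(c)$, it sends $\on{sPerf}(U_c)$ into $\on{sPerf}(\E_{\resol}(c))$; the same argument with $\mathbb{R}\map_*$ and Lemma \ref{direct image of D} gives that $\mathbb{R}\map_*$ sends $\on{sPerf}(\E_{\resol}(c))$ into $\on{sPerf}(U_c)$. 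Thus $(\mathbb{L}\map^*,\mathbb{R}\map_*)$ restricts to an adjoint pair between these two categories. To see it is an equivalence, one checks that the unit $1\to \mathbb{R}\map_*\circ\mathbb{L}\map^*$ and counit $\mathbb{L}\map^*\circ\mathbb{R}\map_*\to 1$ are isomorphisms. By observation (ii) above the unit is an isomorphism on $U_c$, hence (being a natural transformation of exact functors) on all of $\on{sPerf}(U_c)$, since the full subcategory of objects on which a natural transformation of exact functors is an isomorphism is triangulated and closed under the relevant operations. For the counit: the composite $\mathbb{L}\map^*\circ\mathbb{R}\map_*$ is exact and sends $\E_{\resol}(c)$ to $\mathbb{L}\map^*(U_c) = \E_{\resol}(c)$ (compatibly with the counit, which on $\E_{\resol}(c)$ is the identity because of the triangle identity together with the unit being an isomorphism on $U_c$), so again the counit is an isomorphism on the generator and hence everywhere on $\on{sPerf}(\E_{\resol}(c))$. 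This proves the strictly perfect equivalence.

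Next I would pass to $\on{Perf}(R)$, which is obtained from $\on{sPerf}(R)$ by closing under retracts (direct summands). Since $\mathbb{L}\map^*$ and $\mathbb{R}\map_*$ are additive, they preserve retracts, so they restrict to functors $\on{Perf}(U_c)\leftrightarrows\on{Perf}(\E_{\resol}(c))$ forming an adjoint pair. The unit and counit remain natural isomorphisms, because the subcategory of objects where a natural transformation is an isomorphism is closed under retracts; alternatively, any object of $\on{Perf}(R)$ is a retract of one in $\on{sPerf}(R)$, on which we already know the unit/counit are isomorphisms. Hence $\mathbb{L}\map^*$ and $\mathbb{R}\map_*$ are quasi-inverse equivalences $\on{Perf}(U_c)\simeq\on{Perf}(\E_{\resol}(c))$ as well. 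I do not expect a serious obstacle here: the only inputs are Lemma \ref{direct image of D}, the computation $\mathbb{L}\map^*(U_c) = \E_{\resol}(c)$, and the soft fact that a natural transformation of exact functors which is an isomorphism on a (set of) compact generator(s) of a triangulated subcategory generated by them is an isomorphism throughout. The one point requiring mild care is verifying that the counit is compatibly the identity on the generator $\E_{\resol}(c)$ — this is a standard consequence of the triangle identities once the unit is known to be invertible on $U_c$ — after which the rest is formal.
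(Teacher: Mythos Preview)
Your proposal is correct and follows essentially the same approach as the paper: both arguments rest on $\mathbb{L}\map^* U_c = \E_{\resol}(c)$ and $\mathbb{R}\map_*\E_{\resol}(c) = U_c$ (Lemma \ref{direct image of D}), verify the equivalence on the single generator, and then propagate to all of $\on{sPerf}$ by exactness and to $\on{Perf}$ by closure under retracts. The only cosmetic difference is that the paper phrases the check on the generator as ``bijection on Homs between free modules'' (via Lemma \ref{lower *}(3)), whereas you phrase it as ``unit and counit are isomorphisms on the generator''; these are equivalent, and your use of the triangle identity to handle the counit is the clean way to make that step precise.
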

\begin{proof}
By Lemma \ref{direct image of D}, we have $\mathbb{R}\map_*\E_{\resol}(c) = U_c$.  By construction, 
$\mathbb{L}\map^*U_c = \E_{\resol}(c)$.  It is immediate that the derived functors $\mathbb{L}\map^*, \mathbb{R}\map_*$ induce functors of the perfect derived categories.  It then suffices to observe that the two functors induce bijections on the Homs between free modules, which is a consequence of Lemmas \ref{lower *}(3) and \ref{direct image of D}.  This proves the strictly perfect statement.  Since the two functors preserve direct sums, the statements for the perfect derived categories follow.
\end{proof}

\subsection{Noncommutative Duality}\label{NC duality section}
We will need the following version of duality.  
\begin{thm}\label{serre duality}
The canonical homomorphism
\begin{equation}\label{SD homom}
\mathbb{R}\Hom_{\E_{\resol}(c)}(M, \E_{\resol}(c)) \rightarrow \mathbb{R}\Hom_{U_c}(\mathbb{R}\map_*M, \mathbb{R}\map_*\E_{\resol}(c))
= \mathbb{R}\Hom_{U_c}(\mathbb{R}\map_*M, U_c)
\end{equation}
is an isomorphism for all $M\in D(\E_{\resol}(c))$.
\end{thm}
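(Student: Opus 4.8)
The plan is to prove \eqref{SD homom} by d\'evissage in $M$, reducing to the compact generators of $D(\E_{\resol}(c))$, and then to identify the statement for those with classical Grothendieck--Serre duality for $f\colon\resol\to X$ via the Rees/\v{C}ech degeneration of Section \ref{proof of serre}. Observe first that both sides of \eqref{SD homom}, viewed as functors of $M$, are contravariant, triangulated, and convert coproducts into products: the source because $\Hom$ out of a coproduct is a product, and the target because $\mathbb{R}\map_*$ preserves colimits (Lemma \ref{lower *}(4)). The canonical transformation \eqref{SD homom} respects these identifications, so the full subcategory of $M\in D(\E_{\resol}(c))$ for which \eqref{SD homom} is an isomorphism is triangulated, closed under retracts, and closed under arbitrary coproducts, i.e. localizing. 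A localizing subcategory containing a set of compact generators is everything; hence, by Proposition \ref{compact generation} and the explicit compact generators $\E_{\resol}(c)\otimes\chi^{aN}[n]$ ($a,n\in\mathbb{Z}$) exhibited in the proof of Proposition \ref{indecomposable}, it suffices to treat $M=\E_{\resol}(c)\otimes\rho$ with $\rho=\chi^{aN}$ a power of the ample character, shifts being irrelevant. For $\rho$ trivial the morphism is the identity map of $U_c$, by Lemmas \ref{lower *}(3) and \ref{direct image of D}, so we may assume $\rho$ is nontrivial.

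\textbf{\v{C}ech computation and passage to Rees modules.} For $M=\E_{\resol}(c)\otimes\rho$ I would compute both sides by \v{C}ech complexes: by Lemmas \ref{computing twisted} and \ref{Gamma of D} the source of \eqref{SD homom} is $\bigl(\check{C}^\bullet(M_c)\bigr)^{G,\rho}$, while $\mathbb{R}\map_*M\simeq\bigl(\check{C}^\bullet(M_c(\rho))\bigr)^{G}$ by Lemma \ref{lower *} and Theorem \ref{Cech calculates}. Next I would lift everything to the Rees level, forming the \v{C}ech complexes of the Rees modules $\cR(M_c)$ and $\cR(M_c(\rho))$ over $\cR(U_c)$: these are complexes of flat $\C[t]$-modules whose fibre at $t=p\ne0$ recovers the quantum \v{C}ech complex and whose fibre at $t=0$ recovers the commutative \v{C}ech complex on $\mu\inv(0)$ (Proposition \ref{Rees Cech}). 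The adjunctions of Lemmas \ref{lower *} and \ref{computing twisted} produce a morphism of complexes of $\cR(U_c)$-modules lifting \eqref{SD homom}; by Corollary \ref{finiteness of direct images} its source and target have finitely generated cohomology over $\cR(U_c)$, and since $\cR(U_c)$ is Auslander--Gorenstein with dualizing complex $\cR(U_c)$ (Lemma \ref{Gorenstein}) the same holds after applying $\mathbb{R}\Hom_{\cR(U_c)}(-,\cR(U_c))$.

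\textbf{Reduction to the commutative case and Nakayama.} Reducing this Rees morphism modulo $t$ and taking $G$-invariants (using reductivity of $G$) yields the classical Grothendieck--Serre duality morphism
\bd
\mathbb{R}\Hom_{\theo_{\resol}}\bigl(\mathscr{L}^{aN},\theo_{\resol}\bigr)\longrightarrow \mathbb{R}\Hom_{\theo_X}\bigl(\mathbb{R}f_*\mathscr{L}^{aN},\mathbb{R}f_*\theo_{\resol}\bigr)=\mathbb{R}\Hom_{\theo_X}\bigl(\mathbb{R}f_*\mathscr{L}^{aN},\theo_X\bigr),
\ed
where the last identification is Assumption \ref{assumptions}(3). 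Classical Grothendieck duality for the projective morphism $f$, together with the triviality of the dualizing sheaves $\omega_{\resol}\cong\theo_{\resol}$ and $\omega_X\cong\theo_X$ from Assumptions \ref{assumptions}(2a),(2b), shows this is an isomorphism. Finally, a graded Nakayama argument exactly as in the proof of Proposition \ref{indecomposable}---finite generation over $\cR(U_c)$ from Corollary \ref{finiteness of direct images}, then Theorem 2.17 of \cite{GW}---propagates the isomorphism from $t=0$ to generic $t$, hence to $t=1$, which is precisely \eqref{SD homom} for $M=\E_{\resol}(c)\otimes\rho$.

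\textbf{Main obstacle.} I expect the crux to be the base-change compatibility in the previous step: one must verify that $\mathbb{R}\Hom_{\cR(U_c)}\bigl(\bigl(\check{C}^\bullet(\cR(M_c(\rho)))\bigr)^{G},\cR(U_c)\bigr)$ genuinely reduces modulo $t$ to $\mathbb{R}\Hom_{\on{gr}U_c}\bigl(\bigl(\check{C}^\bullet(\on{gr}M_c(\rho))\bigr)^{G},\on{gr}U_c\bigr)$, and that the latter is the commutative duality morphism for $f\colon\resol\to X$. Controlling this requires the flatness of the \v{C}ech terms over $\C[t]$, the properness and finiteness statements of Corollary \ref{finiteness of direct images} (this is where quasi-properness of $\mathbb{R}\map_*$ is used), the Auslander--Gorenstein formalism of Lemma \ref{Gorenstein}, and a careful identification---from the definitions of Section \ref{proof of serre}---of the $G$-invariant \v{C}ech complexes with sheaf cohomology on the GIT quotient $\resol$ and on $X$.
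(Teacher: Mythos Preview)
Your proposal is correct and follows essentially the same approach as the paper: reduce to the compact generators $\E_{\resol}(c)\otimes\chi^\ell$, compute both sides via the \v{C}ech complex, lift to Rees modules, invoke classical Grothendieck--Serre duality at $t=0$ (using Assumptions \ref{assumptions}(2a),(2b),(3)), and propagate back via graded Nakayama. The only minor difference is in the reduction step: the paper rephrases \eqref{SD homom} as the assertion that the adjunction morphism $\E_{\resol}(c)\to\map^!U_c$ is an isomorphism and then tests its cone against generators, which sidesteps the (routine) verification that the transformation \eqref{SD homom} is compatible with coproducts; and the paper makes explicit the use of Lemma \ref{replacement lemma} to obtain bounded-above free replacements compatible with $\C[t]$-base change, which is precisely the mechanism underlying the base-change compatibility you flag as the main obstacle.
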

Note that by adjunction, the statement is immediately equivalent to:
\begin{corollary}\label{computation of f^!}
We have $\map^! U_c = \E_{\resol}(c)$.
\end{corollary}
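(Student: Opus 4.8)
\textbf{Proof strategy for Corollary \ref{computation of f^!}.}
The plan is to deduce the identity $\map^! U_c = \E_{\resol}(c)$ directly from Theorem \ref{serre duality} together with Lemma \ref{direct image of D}, purely by adjunction. Recall from Proposition \ref{existence of right adjoint} that $\map^!$ is the right adjoint of $\mathbb{R}\map_*: D(\E_{\resol}(c))\rightarrow D(U_c)$. Thus for every $M\in D(\E_{\resol}(c))$ we have a functorial isomorphism
\begin{equation*}
\mathbb{R}\Hom_{\E_{\resol}(c)}(M, \map^! U_c) \cong \mathbb{R}\Hom_{U_c}(\mathbb{R}\map_* M, U_c).
\end{equation*}
On the other hand, Theorem \ref{serre duality} provides, again functorially in $M$, an isomorphism
\begin{equation*}
\mathbb{R}\Hom_{\E_{\resol}(c)}(M, \E_{\resol}(c)) \xrightarrow{\ \sim\ } \mathbb{R}\Hom_{U_c}(\mathbb{R}\map_* M, \mathbb{R}\map_* \E_{\resol}(c)) = \mathbb{R}\Hom_{U_c}(\mathbb{R}\map_* M, U_c),
\end{equation*}
where the final equality is Lemma \ref{direct image of D}. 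Composing, we obtain a natural isomorphism $\mathbb{R}\Hom_{\E_{\resol}(c)}(M, \E_{\resol}(c)) \cong \mathbb{R}\Hom_{\E_{\resol}(c)}(M, \map^! U_c)$ of functors in $M$.

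The first step is to check that this composite natural isomorphism is the one induced by a genuine morphism $\E_{\resol}(c)\rightarrow \map^! U_c$ in $D(\E_{\resol}(c))$. Indeed, the map \eqref{SD homom} of Theorem \ref{serre duality} is the canonical one coming from applying $\mathbb{R}\map_*$ and the identification of $\mathbb{R}\map_* \E_{\resol}(c)$ with $U_c$; unwinding the adjunction $(\mathbb{R}\map_*, \map^!)$, this is exactly the map on $\mathbb{R}\Hom(M, -)$ induced by the unit morphism $u: \E_{\resol}(c)\rightarrow \map^! \mathbb{R}\map_* \E_{\resol}(c) = \map^! U_c$ (using Lemma \ref{direct image of D}). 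So the content of Theorem \ref{serre duality} is precisely that $u$ induces an isomorphism $\mathbb{R}\Hom_{\E_{\resol}(c)}(M, \E_{\resol}(c)) \xrightarrow{\sim} \mathbb{R}\Hom_{\E_{\resol}(c)}(M, \map^! U_c)$ for all $M$.

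The second step is then a Yoneda argument: a morphism $u$ in a triangulated category that becomes an isomorphism after applying $\mathbb{R}\Hom(M, -)$ for every object $M$ is itself an isomorphism (take $M$ to range over, or to be built from, a set of compact generators, using that $D(\E_{\resol}(c))$ is compactly generated by Proposition \ref{compact generation}; alternatively, complete $u$ to a triangle $\E_{\resol}(c)\xrightarrow{u}\map^! U_c\rightarrow Q\rightarrow$, deduce $\mathbb{R}\Hom(M,Q)=0$ for all $M$, and conclude $Q\simeq 0$). Hence $u$ is an isomorphism, i.e. $\map^! U_c \cong \E_{\resol}(c)$, which is the assertion.

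I do not anticipate any real obstacle here: the corollary is a formal consequence of Theorem \ref{serre duality}, and all the work is in that theorem. The only point requiring minor care is the bookkeeping in the first step---confirming that the canonical map in \eqref{SD homom} corresponds under the $(\mathbb{R}\map_*, \map^!)$-adjunction to the unit morphism $\E_{\resol}(c)\rightarrow\map^! U_c$, rather than to some other natural transformation---so that Theorem \ref{serre duality} is literally the statement that this unit is an isomorphism. This is a routine diagram chase with the adjunction units and counits, combined with the identification $\mathbb{R}\map_*\E_{\resol}(c)=U_c$ of Lemma \ref{direct image of D}.
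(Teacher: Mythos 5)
Your proposal is correct and matches the paper's own reasoning: the paper notes that the corollary is "by adjunction... immediately equivalent" to Theorem \ref{serre duality}, and in the proof of that theorem explicitly identifies the map \eqref{SD homom} with the adjunction morphism $\E_{\resol}(c)\rightarrow \map^!\mathbb{R}\map_*\E_{\resol}(c)=\map^!U_c$, so that the theorem amounts to this unit being an isomorphism, checked on (compact) generators exactly as you do. No issues to report.
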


\subsection{Proof of Theorem \ref{serre duality}}\label{SD proof}
The homomorphism in the theorem comes from an adjunction morphism 
\bd
\E_{\resol}(c)\rightarrow \map^! \mathbb{R}\map_*\E_{\resol}(c) = \map^! U_c.
\ed
The statement of the theorem is equivalent to this adjunction morphism being an isomorphism in $D(\E_{\resol}(c))$.  It suffices to check that the induced map
\bd
\Hom_{\E_{\resol}(c)}(-, \E_{\resol}(c))\rightarrow \Hom_{\E_{\resol}(c)}(-, \map^! U_c)
\ed
is an isomorphism for a set of generators of $D(\E_{\resol}(c))$: if it is, then the cone of the adjunction must be zero.  Thus, we may prove the theorem by checking
that \eqref{SD homom} is an isomorphism for a collection of compact generators of $D(\E_{\resol}(c))$.  By the proof of Proposition 
\ref{indecomposable}, the objects $(\E_{\resol}(c)\otimes\chi^\ell)[n]$, $\ell\in\mathbb{Z}$, generate, and thus in particular it is sufficient to prove that \eqref{SD homom} is an isomorphism for objects $\pi_c(M_c(\chi^\ell))[n]$.  Via the adjunction $(\mathbb{L}\map^*, \mathbb{R}\map_*)$, moreover, it is enough to show that the natural map
\begin{equation}\label{the one to show}
\mathbb{R}\Hom_{\E_{\resol}(c)}\big(\pi_c(M_c(\chi^\ell)), \E_{\resol}(c)\big) \rightarrow \mathbb{R}\Hom_{\cE_{\resol}(c)}\big(\mathbb{L}\map^*\mathbb{R}\map_*\pi_c(M_c(\chi^\ell)), \E_{\resol}(c)\big)
\end{equation}
induced by the adjunction $\mathbb{L}\map^*\mathbb{R}\map_* \pi_c(M_c(\chi^\ell))\rightarrow \pi_c(M_c(\chi^\ell))$ is an isomorphism for all $\ell$. 
We will establish this using the corresponding map on Rees modules.

Write $M = M_c(\chi^\ell)$ and equip it with the standard filtration and let $\cR(M)$ be the corresponding Rees module.  By Proposition \ref{finiteness of direct images}, $\check{C}^\bullet\big(\cR(M)\big)^G$ is a bounded complex with cohomologies that are finitely generated over $\cR(U_c)$.  By Lemma \ref{replacement lemma}, we may find a bounded-above complex $F^\bullet$ of 
finitely generated free $\cR(U_c)$-modules and a quasi-isomorphism $F^\bullet \rightarrow \check{C}^\bullet\big(\cR(M)\big)^G$ that is compatible with 
$\C[t]$-base change.  We obtain a diagram $\cR(M)\rightarrow \check{C}^\bullet\big(\cR(M)\big) \leftarrow \cR(M_c)\otimes_{\cR(U_c)} F^\bullet$ 
of $(G,c)$-equivariant graded $\cR(\alg)$-modules.  Applying $\pi_c$ and using Theorem \ref{Cech calculates}(1), we get a map
$\pi_c\cR(M) \leftarrow \pi_c \cR(M_c)\otimes_{\cR(U_c)} F^\bullet$.  This induces a map 
\bd
\mathbb{R}\Hom\big(\pi_c \cR(M), \pi_c\cR(M_c)\big) \rightarrow \mathbb{R}\Hom\big(\pi_c \cR(M_c)\otimes_{\cR(U_c)} F^\bullet, \pi_c\cR(M_c)\big).
\ed
Applying the adjunction between $\pi_c$ and $\mathbb{R}\Gamma_c$, we obtain a map
\begin{equation}\label{another label}
\mathbb{R}\Hom\big(\cR(M), \mathbb{R}\Gamma_c\pi_c\cR(M_c)\big) \rightarrow \mathbb{R}\Hom\big(\cR(M_c)\otimes_{\cR(U_c)} F^\bullet, \mathbb{R}\Gamma_c\pi_c\cR(M_c)\big).
\end{equation}
Since both $\cR(M)$ and $\cR(M_c)\otimes_{\cR(U_c)} F^\bullet$ are complexes of projectives in $(\cR,G,c)-\on{mod}$, using Theorem \ref{Cech calculates}(2), \eqref{another label} is identified with a map
\begin{equation}\label{Rees Hom to use}
\Hom_{(\cR,G,c)}\big(M, \check{C}^\bullet(\cR(M_c))\big) \rightarrow \Hom_{(\cR,G,c)}\big(\cR(M_c)\otimes_{\cR(U_c)} F^\bullet, \check{C}^\bullet(\cR(M_c))\big).
\end{equation}

We want to apply $\C[t]/(t)\otimes -$ to \eqref{Rees Hom to use}.  Since $M$ and $\cR(M_c)\otimes_{\cR(U_c)} F^\bullet$ are complexes of finitely generated modules that are $\C[t]$-flat, Lemma \ref{base change of Hom} implies that this amounts to computing the Hom of base-changed modules.  By construction, 
$\C[t]/(t)\otimes \cR(M_c)\otimes_{\cR(U_c)} F^\bullet \simeq \C[\mu\inv(0)]\otimes_{\C[X]}\mathbb{R}f_*\theo_{\resol}$.  Hence the above map reduces mod $t$ to
\bd
\Hom_{(\C[\bigvar],G)}\big(\on{gr}\, M, \check{C}^\bullet(\mu\inv(0)^{ss})\big) \rightarrow 
\Hom_{(\C[\bigvar],G)}\big(\C[\mu\inv(0)]\otimes_{\C[X]}\mathbb{R}f_*\theo_{\resol}, \check{C}^\bullet(\mu\inv(0)^{ss})\big).
\ed
Using the adjunction between direct and inverse image along the inclusion $\mu\inv(0)^{ss}\rightarrow \mu\inv(0)$, this is identified with 
$\Hom_{\resol}(\mathscr{L}^\ell, \theo_{\resol})\rightarrow \Hom_{\resol}(\mathbb{L}f^*\mathbb{R}f_*\mathscr{L}^\ell, \theo_{\resol}).$
This is an isomorphism by ``classical'' Serre-Grothendieck duality (using that $f:\resol\rightarrow X$ has trivial relative dualizing sheaf).

We next want to conclude that \eqref{Rees Hom to use} is an isomorphism.  Note that both complexes appearing there are $\C[t]$-torsion free, hence
$\C[t]$-flat, and have finitely generated (as $\cR(U_c)$-modules) cohomologies.  Hence (Lemma \ref{replacement lemma}) they can be replaced by a map $\phi$ of bounded above complexes of finitely generated free modules $\cR(U_c)$-modules, compatibly with $\C[t]$-base change.  The cone on 
$\phi$ is then bounded above and consists of finitely generated free $\cR(U_c)$-modules.  Tensoring the cone with $\C[t]/(t)$ yields a complex with all cohomologies vanishing by the conclusion of the previous paragraph; hence by Proposition \ref{filtrations and ass gr}, $\on{Cone}(\phi)\simeq 0$, i.e., 
\eqref{Rees Hom to use} is an isomorphism.  

Finally,
since $M$ and $\cR(M_c)\otimes_{\cR(U_c)} F^\bullet$ are complexes of finitely generated modules that are $\C[t]$-flat, tensoring \eqref{Rees Hom to use} with $\C[t]/(t-1)$ gives (again by Lemma \ref{base change of Hom}) the isomorphism
\bd
\mathbb{R}\Hom\big(M, \mathbb{R}\Gamma_c\pi_c M_c\big) \rightarrow \mathbb{R}\Hom\big(M_c\otimes_{U_c} (\C[t]/(t-1)\otimes F^\bullet), \mathbb{R}\Gamma_c\pi_c M_c\big).
\ed
Using the adjunction $(\pi_c, \mathbb{R}\Gamma_c)$ and identifying $\pi_c (M_c\otimes_{U_c}\C[t]/(t-1)\otimes F^\bullet)$ with $\mathbb{L}\map^*\mathbb{R}\map_*M$,
we thus obtain that \eqref{the one to show} is an isomorphism, as desired.\hfill\qedsymbol

\section{Derived Equivalence}

\subsection{Equivalence Theorem}
Let $F: C\rightarrow D$ be an exact functor of unbounded derived categories (of abelian categories that contain arbitrary direct sums).  We say $F$ is {\em bounded} if it induces a functor from the bounded derived subcategory to the bounded derived subcategory: that is, for every $a\leq b$ and every object $c\in C^{[a,b]}$, there is some $N$ such that 
$F(c)\in D^{[a-N, b+N]}$.  We say $F$
 is {\em bounded by $N$} if, for all $a \leq b$,
$F(C^{[a,b]}) \subseteq D^{[a-N,b+N]}$.  We will say $F$ is {\em uniformly bounded} or {\em has finite Tor-dimension} if it is bounded by $N$ for some $N$.  

\begin{lemma}\label{bounded and uniformly bounded}
Suppose $F:C\rightarrow D$ is an exact and right $t$-exact functor of unbounded derived categories.  Suppose, in addition, that $F$ is continuous (that is, it is a left adjoint).  Then:
\begin{enumerate}
\item $F$ is bounded if and only if $F$ is uniformly bounded. 
\item If $C=D(A)$ is the unbounded derived category of a locally noetherian abelian category $A$, then $F$ is bounded by $N$ if and only if $F(c)\in D^{[a-N, b+N]}$ for every $a\leq b$ and object $c\in C^{[a,b]}$ with noetherian cohomologies.
\end{enumerate}
\end{lemma}
\begin{proof}
Write $C=  D(A)$ as in part (2).  Note that $F$ is uniformly bounded if and only if there is some $N$ such that $F(c)\in D^{[-N, N]}$ for all $c\in A$. 

(1) Only one direction requires proof.  If $F$ is not uniformly bounded, find a sequence $c_i\in A$, $i\geq 1$,  such that $\tau_{\leq -i}\big(F(c_i)\big)\not\simeq 0$ (that is, $F(c_i)$ has a nonzero cohomology in some degree $\leq -i$).  Since $F$ is continuous (i.e. commutes with colimits), we find that $F(\oplus_i c_i) \simeq \oplus_i F(c_i)$ has unbounded cohomologies.  Thus $F$ is not bounded.

(2) It suffices to check the uniform bound on objects of $A$.  Since $F$ commutes with colimits and, by hypothesis, every object of $A$ is a colimit of noetherian objects, the conclusion follows.
\end{proof}

Given $F\in D(U_c)$, there is an adjunction morphism 
$F\xrightarrow{a}\mathbb{R}\map_*\mathbb{L}\map^*F$.
\begin{lemma}\label{faithful}
The adjunction 
\bd
F\xrightarrow{a}\mathbb{R}\map_*\mathbb{L}\map^*F
\ed
 is an isomorphism for all $F$ in $D(U_c)$.  
 \end{lemma}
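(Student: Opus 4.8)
The plan is to reduce the claim to the single object $U_c\in D(U_c)$ and then apply Lemma \ref{direct image of D}. First I would record that both functors being compared preserve small colimits: the identity trivially, and $\mathbb{R}\map_*\circ\mathbb{L}\map^*$ because $\mathbb{L}\map^*$, being a left adjoint, preserves all colimits, while $\mathbb{R}\map_*$ preserves colimits by Lemma \ref{lower *}(4). Since $a$ is a natural transformation of exact functors, the full subcategory of those $F$ for which $a_F$ is an isomorphism is a triangulated subcategory, and by the preceding observation it is closed under small coproducts. By Lemma \ref{compact = perfect}, $D(U_c)$ is compactly generated by $U_c$; applying Lemma \ref{ind-category} with $S=\{U_c\}$, every object of $D(U_c)$ is a colimit of a diagram in the smallest triangulated subcategory containing $U_c$. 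It therefore suffices to prove that $a_{U_c}$ is an isomorphism.

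For $F=U_c$ no derived corrections occur, since $U_c$ is free over itself: $\mathbb{L}\map^*U_c=\pi_c\on{Loc}(U_c)=\pi_c M_c=\E_{\resol}(c)$ and $\mathbb{R}\map_*\E_{\resol}(c)=\mathbb{H}_c\,\mathbb{R}\Gamma_c\,\pi_c M_c$. I would then unwind the composite adjunction $\mathbb{L}\map^*=\pi_c\circ\mathbb{L}\on{Loc}\dashv\mathbb{H}_c\circ\mathbb{R}\Gamma_c=\mathbb{R}\map_*$: the unit at $U_c$ factors as the unit $U_c\to\mathbb{H}_c\on{Loc}(U_c)$ of $(\on{Loc},\mathbb{H}_c)$, followed by $\mathbb{H}_c$ applied to the unit $M_c\to\mathbb{R}\Gamma_c\,\pi_c M_c$ of $(\pi_c,\Gamma_c)$ (here using exactness of $\mathbb{H}_c$, Lemma \ref{basics of H}(1)). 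The first map is the canonical map $U_c\to\mathbb{H}_c(M_c)$, which is an isomorphism by Lemma \ref{basics of H}(3). Under this identification $a_{U_c}$ becomes precisely the natural map $U_c\to\mathbb{R}\map_*\E_{\resol}(c)$ written down in the proof of Lemma \ref{direct image of D}, and that lemma asserts exactly that this map is an isomorphism (its proof going through Theorem \ref{Cech calculates}, Proposition \ref{Rees Cech}, and Assumption \ref{assumptions}(3)).

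Combining the two steps proves the lemma. The one point requiring genuine care is the formal identification in the second paragraph: one must check that the unit of the composed adjunction at $U_c$ really is the morphism constructed in Lemma \ref{direct image of D}, rather than merely some morphism with the same source and target. This is a routine manipulation of units of composite adjunctions together with the identifications $\mathbb{L}\on{Loc}(U_c)=M_c$ and $\mathbb{H}_c(M_c)=U_c$; all of the substantive content has already been absorbed into Lemma \ref{direct image of D}.
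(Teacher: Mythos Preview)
Your proof is correct and follows essentially the same route as the paper: reduce to a generator using that both functors preserve colimits (the paper checks all perfect complexes at once rather than just $U_c$, but this is cosmetic), then invoke Lemma \ref{direct image of D}. Your second paragraph, tracking that the adjunction unit at $U_c$ really is the map constructed in Lemma \ref{direct image of D}, adds a layer of care that the paper's proof leaves implicit in the line $\mathbb{R}\map_*\mathbb{L}\map^*F \simeq \mathbb{R}\map_*\E_{\resol}(c)\otimes_{U_c} P^\bullet \simeq P^\bullet$.
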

 \begin{proof}
  Suppose that $F$ is perfect (equivalently, compact), and 
write $F\simeq P^\bullet$ where $P^\bullet$ is a bounded-above complex of free modules.  Then 
\bd
\mathbb{R}\map_*\mathbb{L}\map^*F \simeq \mathbb{R}\map_*\E_{\resol}(c)\otimes_{U_c} P^\bullet \simeq P^\bullet.
\ed
Furthermore, both functors $\mathbf{1}$ and $\mathbb{R}\map_*\mathbb{L}\map^*$ commute with colimits.  Since $D(U_c)$ is compactly generated, every object is a colimit of compact objects (Lemma \ref{ind-category}), and the lemma follows.
 \end{proof}
\noindent
 Note that Lemma \ref{faithful} can be seen as a special case of the projection formula.

The main result of the paper is the following.

\begin{thm}\label{main equiv thm}
Suppose that $\mathbb{L}\map^*$ is bounded.  Then:
\begin{enumerate}
\item The functor $\map^!$ preserves coproducts.
\item The functors $\mathbb{L}\map^*$ and $\mathbb{R}\map_*$ form mutually quasi-inverse equivalences of the unbounded derived categories,
\bd
\mathbb{L}\map^*: D(U_c)\leftrightarrows D(\E_{\resol}(c)): \mathbb{R}\map_*,
\ed
that restrict to equivalences of the bounded derived categories.
\end{enumerate}
\end{thm}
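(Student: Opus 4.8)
The plan is to deduce the theorem from Proposition \ref{BMR lemma}, applied to the adjoint pair $F = \mathbb{L}\map^*\colon D(U_c)\to D(\E_{\resol}(c))$, $G = \mathbb{R}\map_*$ (an adjoint pair by Lemma \ref{lower *}(2)). The right adjoint $F^!$ furnished by part (1) of that proposition coincides with the functor $\map^!$ of Proposition \ref{existence of right adjoint}, so part (1) of Proposition \ref{BMR lemma} yields statement (1) here, and part (2) yields the equivalence of unbounded derived categories in statement (2). For the bounded statement I would note that $\mathbb{L}\map^*$ is bounded by hypothesis and that $\mathbb{R}\map_* = \mathbb{H}_c\circ\mathbb{R}\Gamma_c$ is bounded, since $\mathbb{R}\Gamma_c$ is computed by the finite-length \v{C}ech complex of Theorem \ref{Cech calculates}(2) and $\mathbb{H}_c$ is exact (Lemma \ref{basics of H}); as the two functors are mutually quasi-inverse on the unbounded categories and each preserves boundedness, they restrict to mutually quasi-inverse equivalences of the bounded subcategories.

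Thus the task reduces to verifying conditions (C1)--(C6). Four of them are already available: (C1) is Proposition \ref{compact generation}(2), (C2) is Lemma \ref{lower *}(4), (C5) is Lemma \ref{faithful}, and (C6) is Proposition \ref{indecomposable}. For (C4) I would take $S = \{U_c[n] : n\in\mathbb{Z}\}$, a set of compact generators of $D(U_c)$ by Lemma \ref{compact = perfect}; since $F(U_c) = \mathbb{L}\map^* U_c = \E_{\resol}(c)$ by construction and $F^!(U_c) = \map^! U_c = \E_{\resol}(c)$ by Corollary \ref{computation of f^!} (this is exactly where Serre duality is used), exactness of $F$ and $F^!$ gives $F(U_c[n])\cong F^!(U_c[n])$ for all $n$. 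So everything comes down to condition (C3), which is the only place the hypothesis that $\mathbb{L}\map^*$ is bounded is invoked; I expect this to be the main obstacle.

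To verify (C3) I would take the set of compact generators $\{\E_{\resol}(c)\otimes\chi^{aN}[n] : a,n\in\mathbb{Z}\}$ of $D(\E_{\resol}(c))$ produced in the proof of Proposition \ref{indecomposable} (with $N$ chosen so that $\mathscr{L}^N$ is very ample) and show that $P_a := \mathbb{R}\map_*\big(\E_{\resol}(c)\otimes\chi^{aN}\big)$ is perfect, hence compact, over $U_c$. That $P_a$ lies in $D^b(U_c)$ with finitely generated cohomology is immediate: boundedness is as above, and finite generation of the cohomology is Corollary \ref{finiteness of direct images}; since $U_c$ is Noetherian (its associated graded $\C[X]$ is a finitely generated $\C$-algebra) this makes $P_a$ pseudo-coherent, so it remains only to bound its Tor-dimension. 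First I would observe that boundedness of $\mathbb{L}\map^*$ forces a uniform bound: because $\mathbb{L}\map^* = \pi_c\circ(M_c\overset{\mathbb{L}}{\otimes}_{U_c}-)$ with both factors preserving coproducts (Lemma \ref{lower *}(2)), were there $U_c$-modules $M_j$ with $H^{-k_j}(\mathbb{L}\map^* M_j)\neq 0$ and $k_j\to\infty$, the module $\bigoplus_j M_j$ would be sent outside $D^b(\E_{\resol}(c))$, a contradiction; so there is $d$ with $\mathbb{L}\map^* M$ concentrated in cohomological degrees $\geq -d$ for every $U_c$-module $M$. Then the projection formula (cf. the remark after Lemma \ref{faithful}) gives, for any module $M$,
\[
P_a\overset{\mathbb{L}}{\otimes}_{U_c} M \;\simeq\; \mathbb{R}\map_*\big((\E_{\resol}(c)\otimes\chi^{aN})\otimes_{\E_{\resol}(c)}\mathbb{L}\map^* M\big) \;=\; \mathbb{R}\map_*\big((\mathbb{L}\map^* M)\otimes\chi^{aN}\big),
\]
the last equality because twisting by the line bundle $\chi^{aN}$ is an exact autoequivalence. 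The complex $(\mathbb{L}\map^* M)\otimes\chi^{aN}$ is then concentrated in degrees $\geq -d$, and $\mathbb{R}\map_*$ raises cohomological degree by at most the length of the \v{C}ech complex of Theorem \ref{Cech calculates}(2); hence $P_a\overset{\mathbb{L}}{\otimes}_{U_c} M$ is concentrated in a fixed range of degrees, independent of $M$. So $P_a$ has finite Tor-dimension and is therefore perfect. This verifies (C3) and completes the argument.
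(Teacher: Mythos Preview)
Your overall architecture is exactly that of the paper: reduce to Proposition~\ref{BMR lemma}, verify (C1), (C2), (C5), (C6) from the indicated results, get (C4) from Corollary~\ref{computation of f^!}, and concentrate the work in (C3) by showing that each $P_a=\mathbb{R}\map_*(\E_{\resol}(c)\otimes\chi^{aN})$ is perfect via a finite-Tor-dimension argument. The bounded-subcategory statement and the uniform-bound trick for $\mathbb{L}\map^*$ are also fine.

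The gap is in your projection formula. You write $P_a\overset{\mathbb{L}}{\otimes}_{U_c}M$ for a (left) $U_c$-module $M$, but $P_a$ is a \emph{left} $U_c$-module and $U_c$ is noncommutative, so this tensor product is not defined. What the perfectness criterion actually requires is that $F\overset{\mathbb{L}}{\otimes}_{U_c}P_a$ be bounded for every \emph{right} $U_c$-module $F$; and then $\mathbb{L}\map^*$ (which eats left modules) does not apply to $F$. Relatedly, ``twisting by $\chi^{aN}$'' is not an autoequivalence of $\E_{\resol}(c)\text{-mod}$: it carries $(G,c)$-equivariant objects to $(G,c-aN\,d\chi)$-equivariant objects, so the formula $(\E_{\resol}(c)\otimes\chi^{aN})\otimes_{\E_{\resol}(c)}\mathbb{L}\map^*M=(\mathbb{L}\map^*M)\otimes\chi^{aN}$ does not even land in the category on which $\mathbb{R}\map_*$ is defined. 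The remark after Lemma~\ref{faithful} only asserts the special case $a=0$, where $\E_{\resol}(c)$ is a genuine bimodule.

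The paper repairs exactly this point. It introduces the right-module pullback $\mathbb{L}\map^*_{rt}(F)=\pi_c\big(F\overset{\mathbb{L}}{\otimes}_{U_c}M_c^\dagger\big)$ and proves the correct projection-type identity (equation~\eqref{projection}):
\[
F\overset{\mathbb{L}}{\otimes}_{U_c}\mathbb{R}\map_*M \;\simeq\; \mathbb{R}\Gamma_c\big(\mathbb{L}\map^*_{rt}(F)\big)\overset{\mathbb{L}}{\otimes}_{\alg} M_c(\chi^\ell),
\]
which is bounded because $M_c$ has a finite projective $\alg$-resolution and the \v{C}ech complex is finite. The remaining step is to show that boundedness of $\mathbb{L}\map^*$ forces boundedness of $\mathbb{L}\map^*_{rt}$; this is Lemma~\ref{bdd implies bdd}, proved by Verdier duality using $M_c^\dagger\simeq\mathbb{D}_{\alg}(M_c)[g]$ (Lemma~\ref{M_c dual}) and the Gorenstein property of $U_c$ (Lemma~\ref{Gorenstein}). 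That duality argument is the missing idea in your (C3) verification.
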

 Part (1) of the theorem is analogous to part of Theorem 1.2 of \cite{LN} (in the commutative case).
 
 \subsection{Proof of Theorem \ref{main equiv thm} and Corollaries}
The proof of Theorem \ref{main equiv thm} uses both left and right $U_c$-modules and $\alg$-modules; hence we begin by introducing some notation we will use in the proof when dealing with right $\alg$-modules or $U_c$-modules.  

Everything in previous sections goes through {\em mutatis mutandis} for for right $\alg$-modules.  
It will be convenient to write the functors $\mathbb{R}\map_*$ and $\mathbb{L}\map^*$ specifically for right $\alg$-modules or $U_c$-modules, for which we use the notation $\mathbb{R}{^{r}\map}_*$ and $\mathbb{L}^{r}\map^*$.  Thus, 
\bd
\mathbb{R}^{r}\map_* (M) = \Hom_{\on{mod}-\cE_{\resol}}(\pi_c M_c^\dagger, M) = \mathbb{R}\Hom_{\on{mod}-\alg}(M_c^\dagger, \mathbb{R}\Gamma_c\pi_c M)^G  \;\; \text{and} \;\;
\mathbb{L}^{r}\map^*(F) = \pi_c (F\otimes_{U_c} M_c^\dagger).
\ed
We write ${}^{r}\map^!$ for the right adjoint of $\mathbb{R}^{r}\map_*$.

Assume first that $\mathbb{L}^{r}\map^*$ is bounded.  We will then prove that $\map^!$  preserves coproducts and use this to conclude that $\mathbb{L}\map^*, \mathbb{R}\map_*$ form mutually quasi-inverse equivalences.  We will subsequently deduce that $\mathbb{L}^{r}\map^*$ and $\mathbb{R}^{r}\map_*$ also form mutually quasi-inverse equivalences (from which it is immediate that ${}^{r}\map^!$ preserves coproducts).   Repeating the arguments {\em mutatis mutandis} with the roles of left and right modules reversed, it is then clear that it is equivalent to assume either that $\mathbb{L}^{r}\map^*$ or $\mathbb{L}\map^*$ is bounded: either assumption leads to equivalence for both left and right modules.  Thus,  we will begin the proof by assuming that $\mathbb{L}^{r}\map^*$ is bounded.

  To prove that $\map^!$ preserves coproducts, it follows from Theorem 5.1 of \cite{Neeman} that it is enough to show that $\mathbb{R}\map_*$ takes a set of compact generators to compact objects. 
We will use the compact objects $M = \E_{\resol}(c)\otimes\chi^\ell = \pi_c(M_c(\chi^\ell))$ from Proposition \ref{indecomposable} and show that, for each such $M$, $\mathbb{R}\map_*M$ is
perfect; since these objects and their shifts generate, this suffices. 

\begin{lemma}\label{perfection and tor}
Suppose $\cM\in D(U_c)$ is a complex with only finitely many nonzero cohomologies and that for every $i$, $h^i(\cM)$ is a finitely generated $U_c$-module.  Then $\cM$ is perfect if and only if there is some $N$ such that, for every right $U_c$-module $F$,  $h^{N-i}\big(F\otimes^{\mathbb{L}}_{U_c} \cM\big) = 0$ for all $i\geq 0$.
\end{lemma}
\begin{proof}[Proof of Lemma.]
Only the ``if'' direction requires proof.  Fix an $N$ as in the statement of the lemma.
By Lemma \ref{replacement lemma}, $\cM$ is isomorphic to a bounded-above complex of finitely generated free $U_c$-modules, 
\bd
\cM \simeq P^\bullet =  [\dots \rightarrow P_{N-1} \xrightarrow{d^{N-1}} P_N \xrightarrow{d^N} P_{N+1}\rightarrow \dots \rightarrow P_k].
\ed
By the vanishing hypothesis for $F=U_c$, we conclude that $P^\bullet$ has no cohomologies in degrees less than $N+1$.
It follows that $P^\bullet \simeq \tau_{\geq N+1}P^\bullet$, where
\bd
\tau_{\geq N+1}P^\bullet  = [ \dots \rightarrow 0\rightarrow \on{Im}(d^N) \rightarrow P_{N+1}\rightarrow P_{N+2}\rightarrow \dots].
\ed
As usual, we also let
\bd
\sigma_{\geq N+1}P^\bullet = [\dots\rightarrow 0\rightarrow  P_{N+1}\rightarrow P_{N+2}\rightarrow \dots].
\ed
 We then get an exact triple
\bd
\sigma_{\geq N+1} P^\bullet   \longrightarrow \tau_{\geq N+1}P^\bullet \longrightarrow \on{Im}(d^{N})[-N]  \xrightarrow{[1]}.
\ed
Now take derived tensor products with a right $U_c$-module $F$; we get a long exact sequence
\bd
\dots \rightarrow 
h^{N-i}\big(F\otimes \tau_{\geq N+1} P^\bullet\big)
\rightarrow h^{N-i}\big(F\otimes \on{Im}(d^{N})[-N]\big) \rightarrow
h^{N-i+1}\big(F\otimes \sigma_{\geq N+1} P^\bullet\big)
  \rightarrow \dots.
\ed
Since $\tau_{\geq N+1} P^\bullet\simeq P^\bullet\simeq \cM$, the left-hand term $h^{N-i}\big(F\otimes \tau_{\geq N+1} P^\bullet\big)$ vanishes for $i\geq 0$ by the hypothesis of the lemma.  On the other hand, since $P^\bullet$ is a complex of projectives, we have 
$h^{N-i+1}\big(F\otimes \sigma_{\geq N+1} P^\bullet\big) = 0$ for $N-i+1\leq N$, i.e., for $i\geq 1$.  Hence 
\bd
h^{-i}\big(F\otimes \on{Im}(d^{N})\big) =  h^{N-i}\big(F\otimes \on{Im}(d^{N})[-N]\big) = 0 \; \text{for $i\geq 1$}.
\ed
It follows from Lemma 4.1.6 of \cite{Weibel} that $\on{Im}(d^{N})$ is a flat $U_c$-module, and hence since $U_c$ is noetherian it is projective \cite[Theorem 3.2.7]{Weibel}.  Hence $\tau_{\geq N+1}P^\bullet$ is a strictly perfect complex quasi-isomorphic to $\cM$.  This completes the proof of the lemma.
\end{proof}

Returning to the proof of the theorem, by Proposition \ref{finiteness of direct images} and Lemma \ref{compact = perfect}, Lemma \ref{perfection and tor} shows that to prove that $\mathbb{R}\map_*M$ is compact, it suffices to prove that there exists an $N$ such that for each right $U_c$-module $F$, the complex
$F\overset{\mathbb{L}}{\otimes} \mathbb{R}\map_*M$ has no cohomologies in degrees less than $N$.  To prove the latter statement, we use the \v{C}ech complex of $M$ (and Proposition \ref{Cech of a quotient}) to write:
\begin{equation}\label{projection}
\begin{split}
F\overset{\mathbb{L}}{\otimes} \mathbb{R}\map_*M   
\cong & F\overset{\mathbb{L}}{\otimes}_{U_c} \big( M_c^\dagger\overset{\mathbb{L}}{\otimes}_\alg \check{C}^\bullet(M_c(\chi^\ell))\big)\\
\cong & \big(F\overset{\mathbb{L}}{\otimes}_{U_c} M_c^\dagger \big)\overset{\mathbb{L}}{\otimes}_{\alg} \check{C}^\bullet(\alg)\otimes_\alg (M_c(\chi^\ell)) \\
\cong &  \mathbb{R}\Gamma_c\big(\mathbb{L}^{r}\map^*(F)\big)\overset{\mathbb{L}}{\otimes}_{\alg} (M_c(\chi^\ell)).
\end{split}
\end{equation}
We note that we are entitled to omit taking $G$-invariants starting from the first line of \eqref{projection} by
Lemma \ref{basics of H} since $M_c(\chi^\ell)$ is $(G,c)$-equivariant.  
Since $M_c(\chi^\ell)$ has a finite projective $\alg$-resolution (for example by Theorem 2.3.7 of \cite{Bjbook}), the final term in \eqref{projection} has bounded cohomology 
provided $\mathbb{L}^{r}\map^*$ is a bounded functor.  Thus, assuming $\mathbb{L}^{r}\map^*$ is a bounded functor, the left-hand side of \eqref{projection} is also bounded.

Suppose next that $\map^!$ preserves coproducts.  
To prove that $\mathbb{L}\map^*$ and $\mathbb{R}\map_*$ are mutually quasi-inverse, we apply Proposition \ref{BMR lemma}.
Condition (C1) of the proposition is satisfied by Proposition \ref{compact generation}(3).
Condition (C2) of the proposition is satisfied by Lemma \ref{lower *}(4).  That condition (C3) of the proposition holds was established at the beginning of this proof.  
Condition (C4) of the proposition is satisfied by 
Corollary \ref{computation of f^!}.  
Condition (C5) of the proposition is satisfied by Lemma \ref{faithful}. 
Condition (C6) of the proposition is satisfied by Proposition \ref{indecomposable}.  It follows that $\mathbb{L}\map^*$ and $\mathbb{R}\map_*$ are mutually quasi-inverse.

Next we use the conclusion of the previous paragraph to prove that $\mathbb{L}^{r}\map^*$ and $\mathbb{R}^{r}\map_*$ are mutually quasi-inverse.  Since $\mathbb{L}\map^*$ is an equivalence, we have that $\map^! U_c = \mathbb{L}\map^* U_c = \pi_c M_c$.  Now, suppose $M= \pi_c \wt{M} \in D^b(\on{mod}-\cE_{\resol}(c))$ is nonzero and write 
\bd
M' = \mathbb{D}(M) = \pi_c \mathbb{R}\Hom_{\on{mod}-\alg}(\wt{M}, \alg) \in D^b(\cE_{\resol}(c)-\on{mod}).
\ed
  Then $M'\not\simeq 0$, and so 
\begin{align*}
\mathbb{R}\Hom(\pi_c M_c^\dagger, M)  & \cong \mathbb{R}\Hom_{\cE_{\resol}(c)}\big(M', \mathbb{D}(\pi_c M_c^\dagger)\big) 
  =  \mathbb{R}\Hom_{\cE_{\resol}(c)}\big(M', \pi_c M_c\big) \\
 & =  \mathbb{R}\Hom_{\cE_{\resol}(c)}\big(M', \map^! U_c\big) 
\cong \mathbb{R}\Hom_{U_c}\big(\mathbb{R}\map_*M', U_c\big)  \neq 0
\end{align*}
where the last inequality follows (via Lemma \ref{Gorenstein} and \cite[Proposition~1.3]{YZ}) from the fact that $\mathbb{R}\map_*$ has already been proven to be an equivalence.  It follows that the right orthocomplement of the image under $\mathbb{L}^{r}\map^*$ of $D^b(\on{mod}-U_c)$ in
$D^b(\on{mod}-\cE_{\resol}(c))$ is zero, which implies that $\mathbb{L}^{r}\map^*$ is an equivalence.
This completes
the proof of the theorem.\hfill\qedsymbol
\begin{remark}
In particular, it follows from the proof that $\mathbb{L}\map^*$ is a bounded functor of left $U_c$-modules if and only if it is a bounded functor of right $U_c$-modules, even in situations in which this may not imply finite global dimension (cf. Corollary \ref{first cor} below).
\end{remark}
\begin{corollary}\label{first cor}
Suppose that $\mu$, $X$, $\resol$, and $f: \resol\rightarrow X$ satisfy Assumptions
\ref{assumptions}.  
Consider the following assertions.
\begin{enumerate}
\item The algebra $U_c$ has finite global dimension.
\item The functor $\mathbb{L}\map^*$ is cohomologically bounded (that is, preserves complexes with cohomologies in only finitely many degrees).
\item The functor $\mathbb{L}\map^*$ induces an exact equivalence of bounded derived categories,
\bd 
\mathbb{L}\map^*: D^b(U_c)\rightarrow D^b(\cE_{\resol}(c)).
\ed
\end{enumerate}
Then (1)$\implies$(2)$\implies$(3).  If $G$ acts freely on $\mu\inv(0)^{ss}$, so $\resol$ is smooth, then (3)$\implies$(1).
\end{corollary}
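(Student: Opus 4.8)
The plan is to establish the cycle $(1)\Rightarrow(2)\Rightarrow(3)$, and then $(3)\Rightarrow(1)$ under the smoothness hypothesis.

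For $(1)\Rightarrow(2)$: if $U_c$ has finite global dimension $d$, then the right $U_c$-module $M_c$ has flat dimension at most $d$, so the functor $M_c\overset{\mathbb{L}}{\otimes}_{U_c}-\;=\;\mathbb{L}\on{Loc}$ carries bounded complexes to bounded complexes; composing with the exact functor $\pi_c$ shows $\mathbb{L}\map^*=\pi_c\circ\mathbb{L}\on{Loc}$ (Lemma \ref{lower *}(1)) is cohomologically bounded. For $(2)\Rightarrow(3)$: this is precisely Theorem \ref{main equiv thm}, which asserts that when $\mathbb{L}\map^*$ is bounded the functors $\mathbb{L}\map^*$ and $\mathbb{R}\map_*$ are mutually quasi-inverse equivalences of the unbounded derived categories restricting to equivalences of the bounded derived categories; in particular $(3)$ holds.

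For $(3)\Rightarrow(1)$ when $\resol$ is smooth, I would first record the key homological input. Because $G$ acts freely on $\mu\inv(0)^{ss}$ with smooth quotient $\resol$, the category $\E_{\resol}(c)-\on{mod}$ is the category of modules (admitting lattices) over the sheaf $\cA_{\resol}(c)$ of twisted microdifferential operators on the smooth symplectic variety $\resol$. This sheaf has finite homological dimension (locally it is a ring of microdifferential operators on a smooth variety, with associated graded the regular ring $\theo_\resol$), and since $\resol$ is projective over the affine $X$ it has finite cohomological dimension; together these yield that the abelian category $\E_{\resol}(c)-\on{mod}$ has finite homological dimension, say at most $h$. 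Now, given $U_c$-modules $M$ and $N$, the equivalence of $(3)$ gives
\[
\Ext^i_{U_c}(M,N)\;\cong\;\Hom_{D^b(\E_{\resol}(c))}\big(\mathbb{L}\map^*M,\;\mathbb{L}\map^*N[i]\big).
\]
Since $\mathbb{L}\map^*=\pi_c\circ(M_c\overset{\mathbb{L}}{\otimes}_{U_c}-)$ with $\pi_c$ exact, both $\mathbb{L}\map^*M$ and $\mathbb{L}\map^*N$ have cohomology concentrated in non-positive degrees, and both are cohomologically bounded by $(3)$; the bound $h$ on the homological dimension of $\E_{\resol}(c)-\on{mod}$ then forces the right-hand side to vanish once $i$ exceeds $h$ plus the cohomological amplitude of $\mathbb{L}\map^*M$, a quantity independent of $N$. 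Hence every $U_c$-module has finite projective dimension. Since $U_c$ is Noetherian and Auslander--Gorenstein (Lemma \ref{Gorenstein}), hence of finite injective dimension over itself, this forces $U_c$ to have finite global dimension, which is $(1)$.

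The main obstacle is the homological input to the last implication: pinning down $\E_{\resol}(c)-\on{mod}$ precisely enough to see that it has finite homological dimension (this is exactly where smoothness of $\resol$ is used, since over a singular base any quantization of $\theo_\resol$ would have infinite homological dimension), and then carrying out the bookkeeping, via the derived equivalence and the standard $t$-structures, that converts a bound on $\Ext$-amplitudes in the microlocal category into a uniform bound on projective dimensions of $U_c$-modules; the Auslander--Gorenstein property of $U_c$ is what upgrades ``every module has finite projective dimension'' to ``finite global dimension''.
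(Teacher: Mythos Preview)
Your outline is correct and follows essentially the same route as the paper: both arguments transfer the Ext computation through the equivalence $\mathbb{L}\map^*$, use smoothness of $\resol$ to bound Exts on the microlocal side, and invoke the Gorenstein property of $U_c$ to pass between ``every module has finite projective dimension'' and ``finite global dimension'' (the paper cites \cite[7.1.8, 7.1.5, 7.1.10(i)]{MR} for this, while you invoke the Iwanaga--Gorenstein bound on projective dimension; these amount to the same thing).

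The one place where your argument needs sharpening is the assertion that $\E_{\resol}(c)\text{-mod}$ has finite homological dimension.  Your justification (``locally it is a ring of microdifferential operators on a smooth variety'') is the right intuition, but recall the paper's Warning: $\E_{\resol}(c)\text{-mod}$ is \emph{not} the full module category over the sheaf $\cA_{\resol}(c)$, so one cannot simply quote a local statement about sheaves of filtered rings with regular associated graded.  The paper makes this step precise exactly as you would want: it writes $L=\pi_c(\wt{L})$, uses the adjunction $(\pi_c,\mathbb{R}\Gamma_c)$ together with the \v{C}ech description of $\mathbb{R}\Gamma_c$ (Theorem~\ref{Cech calculates}) to reduce $\mathbb{R}\Hom_{\E_{\resol}(c)}(L,-)$ to a finite collection of functors $\mathbb{R}\Hom_{(Q_S(\alg),G,c)}(j_S^*\wt{L},-)$ on the genuine module categories of the localized rings $Q_S(\alg)$, and then invokes \cite[Proposition~3.1]{Bj} (finite global dimension of filtered rings with regular Noetherian associated graded) on each piece.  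This is the concrete content behind your phrase ``locally it is a ring of microdifferential operators,'' and is exactly where the free $G$-action on $\mu^{-1}(0)^{ss}$ (hence smoothness of $\resol$) enters.
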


\begin{proof}
(1) $\implies$ (2) is clear.  Theorem \ref{main equiv thm} gives (2) $\implies$ (3).  

We wish to prove that (3) $\implies$ (1).  
By Theorem 4.1.2 of \cite{Weibel}, to prove that $U_c$ has finite global dimension it suffices to show that there is an $N$ such that for all $L\in U_c-\on{mod}$,
the functor $\mathbb{R}\Hom_{U_c}(L, -)$ is bounded by $N$.  
Via $\mathbb{L}\map^*$ and Lemma \ref{bounded and uniformly bounded}, this is equivalent to showing $\mathbb{R}\Hom_{\cE_{\resol}(c)}(L, -)$ is uniformly bounded for every $L\in \cE_{\resol}(c)-\on{mod}$ with a bound independent of $L$.  

Writing $L=\pi_c(\wt{L})$ for some $\wt{L}\in (\alg,G,c)-\on{mod}$, this amounts to showing that $\mathbb{R}\Hom_{(\alg,G,c)}(\wt{L}, \mathbb{R}\Gamma(-))$ is uniformly bounded independent of $L$.    By Theorem \ref{Cech calculates} and a spectral sequence argument, this reduces to showing that for every finite set $S$ of semi-invariants as in Section \ref{Equivariant Microlocal Modules}, the functor
$\mathbb{R}\Hom_{(\D,G,c)}(\wt{L}, j_{S\ast}(-))$ with domain $Q_S(\alg)-\on{mod}$ is uniformly bounded independent of $L$.  By adjunction and exactness of $j_S^*$, this is equivalent to proving uniform boundedness of $\mathbb{R}\Hom(j_S^*\wt{L}, -)$.  Now the uniform boundedness follows from smoothness of $\resol$ by Proposition 3.1 of \cite{Bj}.
\end{proof}

\begin{corollary}\label{fin gl dim}
If $U_c$ has finite global dimension, then $\mathbb{L}\map^*$ is an equivalence of categories.
\end{corollary}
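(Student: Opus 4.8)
The plan is to derive this as an immediate consequence of Theorem \ref{main equiv thm} and the first implication of Corollary \ref{first cor}. The only thing that genuinely needs to be checked is that finite global dimension of $U_c$ forces the functor $\mathbb{L}\map^*$ to be bounded; once that is in hand, Theorem \ref{main equiv thm} supplies the equivalence, in both its unbounded and bounded forms, with nothing further to prove.

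First I would record the boundedness of $\mathbb{L}\map^*$. By Lemma \ref{lower *}(1) we have $\mathbb{L}\map^* = \pi_c\circ\mathbb{L}\on{Loc}$ with $\on{Loc}(N) = M_c\otimes_{U_c} N$ and $\pi_c$ exact, so it suffices to see that $\mathbb{L}\on{Loc} = M_c\overset{\mathbb{L}}{\otimes}_{U_c}(-)$ carries $D^b(U_c)$ into a fixed bounded range of degrees. If $U_c$ has global dimension $d$, then any object of $D^b(U_c)$ is quasi-isomorphic to a bounded complex of projective $U_c$-modules (whose amplitude exceeds that of the original complex by at most $d$), and tensoring such a complex with $M_c$ over $U_c$ lands in a bounded interval of cohomological degrees. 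Hence $\mathbb{L}\map^*$ preserves cohomological boundedness; this is precisely the ``clear'' implication (1)$\implies$(2) of Corollary \ref{first cor}.

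Then I would invoke Theorem \ref{main equiv thm}: with $\mathbb{L}\map^*$ bounded, part (2) of that theorem gives that $\mathbb{L}\map^*$ and $\mathbb{R}\map_*$ are mutually quasi-inverse equivalences of the unbounded derived categories $D(U_c)$ and $D(\E_{\resol}(c))$, restricting to equivalences of the bounded subcategories. Since the argument behind Theorem \ref{main equiv thm} is carried out with dg enhancements throughout (via Proposition \ref{BMR lemma} and Lemma \ref{ind-category}), and all the functors in play are the derived functors of exact functors or of functors on Grothendieck categories, the equivalence is an equivalence of dg categories, and hence also of the underlying triangulated categories. There is essentially no obstacle here: all of the real content sits in Theorem \ref{main equiv thm} and the Serre-duality input (Theorem \ref{serre duality}) behind it, and the only point worth a sentence is the passage between the unbounded and bounded statements, which is already part of Theorem \ref{main equiv thm}(2).
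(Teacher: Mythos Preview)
Your proposal is correct and matches the paper's approach: the paper states Corollary \ref{fin gl dim} without proof, treating it as an immediate consequence of the implication (1)$\Rightarrow$(2) in Corollary \ref{first cor} together with Theorem \ref{main equiv thm}, which is exactly the route you take. Your explicit justification that finite global dimension makes $\mathbb{L}\on{Loc}$ bounded (via bounded projective resolutions) simply spells out what the paper calls ``clear.''
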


\subsection{Flag Varieties}\label{flag varieties}
There is a nice class of algebraic varieties called {\em Mori dream spaces} \cite{HK}, which includes flag varieties, smooth projective toric varieties, and (more generally) spherical varieties.  It is shown in \cite[Corollary~2.4]{HK} that such varieties arise as GIT quotients of (possibly singular) affine varieties by torus actions.  

We are interested in the case when the Mori dream space is nonsingular, particularly the examples of complete flag varieties $G/B$ (which are GIT quotients of the {\em basic affine space} $\overline{G/U}$ by the action of the torus $H = B/U$ at a dominant regular character $\chi$), and we focus on this last example.  We intend to return to the more general setting of Mori dream spaces elsewhere.

There is a standard embedding of $\overline{G/U}$ in the product $\mathbb{V} = V_1\times\dots\times V_r$ of fundamental representations of $G$ (here we assume $G$ is semisimple, connected, and simply connected).  Let $\pi: T^*\mathbb{V}\rightarrow \mathbb{V}$ denote the projection.  It is easy to compute that, for a dominant regular $\chi: H\rightarrow \Gm$, the unstable locus of the action of $H$ on $T^*\mathbb{V}$ is exactly 
\bd
\bigcup_i \pi\inv\big(V_1\times \dots\times V_{i-1}\times\{0\}\times V_{i+1}\times \dots \times V_r\big),
\ed
and its intersection with $\pi\inv(\overline{G/U})$ is exactly $\pi\inv(\overline{G/U}\smallsetminus G/U)$.  Thus, it is natural to view the category of $(H,c)$-equivariant crystals on $\overline{G/U}$ as a natural analog of $(\D(\base),H,c)-\on{mod}$ for a smooth variety 
$\base$, and to generalize our results to crystals to realize $\D_{G/B}(c)-\on{mod}$ as a microlocal category $\cE_{\resol}(c)-\on{mod}$.  

Although this can apparently be done, it is easier to simply apply Proposition \ref{BMR lemma} directly in this setting.  Indeed, our proofs above easily adapt (sometimes with far less difficulty) to prove the following.  We use the standard notation $\on{Loc}(M) = \D_{G/B}(\lambda)\otimes_{U_\lambda} M$ in place of $\mathbb{L}\map^*$ (and $\mathbb{R}\Gamma$ in place of $\mathbb{R}\map_*$).  
\begin{prop}
Choose a character $\lambda: \mathfrak{h}\rightarrow \C$ and let $D(\D_{G/B}(\lambda))$ denote the derived category of $\lambda$-twisted $\D$-modules on $G/B$.  Then:
\begin{enumerate}
\item $D(\D_{G/B}(\lambda))$ is compactly generated.
\item $\mathbb{R}\Gamma: D(\D_{G/B}(\lambda)) \rightarrow D(U_c)$ preserves coproducts.
\item $\mathbb{R}\Gamma$ takes a set of compact generators to compact objects.
\item The adjunction $1\rightarrow \mathbb{R}\Gamma\circ \mathbb{L}\on{Loc}$ is an isomorphism.  
\item $D(\D_{G/B}(\lambda))$ is indecomposable.
\end{enumerate}
\end{prop}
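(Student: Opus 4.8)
The plan is to apply the formal criterion of Proposition~\ref{BMR lemma} to the adjoint pair
\[
\mathbb{L}\on{Loc}:D(U_\lambda)\leftrightarrows D(\D_{G/B}(\lambda)):\mathbb{R}\Gamma,
\]
where $U_\lambda=\Gamma(G/B,\D_{G/B}(\lambda))$ is the corresponding central reduction of $U(\mathfrak{g})$; statements (1)--(5) of the Proposition are exactly conditions (C1), (C2), (C3), (C5) and (C6) of that proposition. As indicated in the text, one may present $\D_{G/B}(\lambda)-\on{mod}$ as the quotient of the category of $(H,\lambda)$-equivariant quasi-coherent $\D$-modules on $\overline{G/U}$ by the localizing subcategory of modules supported on $\overline{G/U}\smallsetminus(G/U)$, so that $\D(\overline{G/U})$, $H$ and $U_\lambda$ play the roles of $\alg$, $G$ and $U_c$ of Sections~\ref{section 4}--\ref{proof of serre}. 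However, since $G/B$ is a genuine smooth \emph{projective} variety, I would carry out the verifications directly on $G/B$, replacing the noncommutative \v{C}ech and Rees machinery of Section~\ref{proof of serre} by ordinary coherent/quasi-coherent cohomology (and an ordinary \v{C}ech resolution), keeping the $\overline{G/U}$ picture only where it streamlines the bookkeeping.

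For (1), I would show that $D(\D_{G/B}(\lambda))$ is compactly generated by the line-bundle twists $\D_{G/B}(\lambda)\otimes_{\cO}\cO(\ell\chi)$, $\ell\in\bbZ$. Each such object is compact because $\mathbb{R}\Hom_{\D_{G/B}(\lambda)}\big(\D_{G/B}(\lambda)\otimes\cO(\ell\chi),-\big)\cong\mathbb{R}\Gamma\big(G/B,\cO(-\ell\chi)\otimes_{\cO}-\big)$, and on the projective variety $G/B$ this functor has bounded cohomological amplitude and commutes with coproducts (for instance, computing via a \v{C}ech resolution); the objects generate because $\D_{G/B}(\lambda)$-modules are quasi-coherent over $\cO_{G/B}$, $\cO(\chi)$ is ample, and a standard spectral-sequence argument reduces the vanishing of a complex to the vanishing of its cohomology sheaves twisted by high powers of $\cO(\chi)$. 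This simultaneously gives (2): one has $\mathbb{R}\Gamma=\mathbb{R}\Hom_{\D_{G/B}(\lambda)}(\D_{G/B}(\lambda),-)$ and $\D_{G/B}(\lambda)$ is among the compact generators just produced, so $\mathbb{R}\Gamma$ preserves coproducts, exactly as in Lemma~\ref{lower *}(4). For (3), I would check that $\mathbb{R}\Gamma(\D_{G/B}(\lambda)\otimes\cO(\ell\chi))$ is perfect over $U_\lambda$; since $U_\lambda$ is Gorenstein (Lemma~\ref{Gorenstein}), by Lemma~\ref{compact = perfect} it is enough that this complex be cohomologically bounded with finitely generated cohomology. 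Boundedness is immediate from projectivity of $G/B$; finite generation over $U_\lambda$ is the flag-variety instance of Corollary~\ref{finiteness of direct images}, following (on passing to associated gradeds for the order filtration) from projectivity of the Springer morphism $T^*(G/B)\to\cN$, which makes $\mathbb{R}\Gamma$ of any coherent sheaf on $T^*(G/B)$ finitely generated over $\gr U_\lambda\cong\C[\cN]$, and it may equally be deduced verbatim from Proposition~\ref{Rees Cech}.

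For (4), I would repeat the argument of Lemma~\ref{faithful}: the essential input is $\mathbb{R}\Gamma(\D_{G/B}(\lambda))=U_\lambda$, concentrated in degree $0$ (the flag-variety form of Lemma~\ref{direct image of D}), which follows from $\mathbb{R}\Gamma\big(T^*(G/B),\cO\big)=\C[\cN]$ --- the rational singularities of the nilpotent cone, i.e. Assumption~\ref{assumptions}(3) for the Springer resolution --- by an associated-graded argument; granting this, $\mathbb{R}\Gamma\circ\mathbb{L}\on{Loc}(P)\cong P$ for perfect $P$, and the case of a general $F\in D(U_\lambda)$ follows since both $\mathbf{1}$ and $\mathbb{R}\Gamma\circ\mathbb{L}\on{Loc}$ commute with colimits and $D(U_\lambda)$ is compactly generated. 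For (5), I would transcribe the proof of Proposition~\ref{indecomposable}: the objects $\D_{G/B}(\lambda)\otimes\cO(\ell\chi)$ are indecomposable (their associated graded for the order filtration has the line bundle $\cO(\ell\chi)$ as its bottom piece and $G/B$ is connected); for $\ell'\geq\ell$ the group $\on{Hom}_{D(\D_{G/B}(\lambda))}\big(\D_{G/B}(\lambda)\otimes\cO(\ell\chi),\,\D_{G/B}(\lambda)\otimes\cO(\ell'\chi)\big)$ is nonzero since the associated \v{C}ech complex, after a Rees-module/Nakayama deformation exactly as in Proposition~\ref{indecomposable}, computes $H^0\big(G/B,\cO((\ell'-\ell)\chi)\big)\neq0$ by ampleness; and by (1) these objects and their shifts generate. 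The argument of Proposition~\ref{indecomposable} then gives indecomposability of both $D(\D_{G/B}(\lambda))$ and its bounded subcategory.

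The main obstacle I anticipate is organizational rather than conceptual: one must correctly identify the ``classical limit'' here --- the Springer resolution $T^*(G/B)\to\cN$, whose total space has trivial canonical bundle and whose affine base is Gorenstein with trivial dualizing sheaf, so that Assumptions~\ref{assumptions}(2a),(2b),(3) hold there for \emph{every} $\lambda$, regular or not, integral or not --- and then verify that $\gr U_\lambda\cong\C[\cN]$ and that the associated graded of $\D_{G/B}(\lambda)$ for the order filtration is the structure sheaf of $T^*(G/B)$, so that the Gorenstein, coherence and cohomology-vanishing inputs used in (3) and (4) genuinely apply. Once this bookkeeping is in place, all five assertions follow by copying the corresponding arguments from Sections~\ref{section 4} and \ref{proof of serre}.
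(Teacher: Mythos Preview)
Your adaptations for (1), (2), (4), and (5) are correct and faithfully mirror the paper's arguments (Proposition~\ref{compact generation}, Lemma~\ref{lower *}(4), Lemma~\ref{faithful}, and Proposition~\ref{indecomposable} respectively), with the simplification you note: the \v{C}ech complex is the ordinary one on the projective variety $G/B$, so coherence and boundedness of direct images are immediate.

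There is, however, a genuine gap in your treatment of (3). You assert that because $U_\lambda$ is Gorenstein, it suffices (via Lemma~\ref{compact = perfect}) to check that $\mathbb{R}\Gamma\big(\D_{G/B}(\lambda)\otimes\cO(\ell\chi)\big)$ is cohomologically bounded with finitely generated cohomology. This implication is false: the Gorenstein property says only that $U_\lambda$ has finite injective dimension over itself, not that every bounded complex with finitely generated cohomology is perfect. Concretely, for $G=SL_2$ one has $\on{gr}\,U_\lambda\cong\C[x,y,z]/(xy-z^2)$, a Gorenstein ring over which the finitely generated module $(x,z)$ has infinite projective dimension; and indeed $\mathbb{R}\Gamma\big(T^*\mathbb{P}^1,\cO(1)\big)$ is a maximal Cohen--Macaulay module of this type, not a perfect complex. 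Lemma~\ref{compact = perfect} offers no shortcut here.

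The paper's own route to (C3), in the proof of Theorem~\ref{main equiv thm}, is different: one shows $\mathbb{R}\map_*M$ is perfect by verifying that $F\overset{\mathbb{L}}{\otimes}_{U_c}\mathbb{R}\map_*M$ is cohomologically bounded for every right module $F$, then uses the projection-formula computation \eqref{projection} to reduce to boundedness of $\mathbb{L}\map^*_{rt}(F)$. That argument \emph{uses} the hypothesis that $\mathbb{L}\map^*$ is bounded. Transported to the flag variety, it establishes (3) once one knows $\mathbb{L}\on{Loc}$ is bounded, i.e.\ once $\lambda$ is regular---which is exactly the hypothesis invoked immediately after the Proposition to deduce derived Beilinson--Bernstein. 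So you should replace your Gorenstein argument by the projection-formula argument; the regularity assumption then enters already at (3), not only at (C4).
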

In order to establish a derived analog of Beilinson-Bernstein localization, then, all that remains is to prove the analog of 
Proposition \ref{BMR lemma}(C4) for the generator $U_\lambda$.  This can be done just as we do it above if (and only if) the derived localization functor
$\mathbb{L}\on{Loc}$ is cohomologically bounded; note that the arguments use more familiar technology since the \v{C}ech complex, for example, is the usual one.  Derived localization then follows from:
\begin{lemma}[\cite{BMR}, Lemma 3.3.4 and Corollary 3.3.5]
The functor $\on{Loc}  = \mathbb{L}\map^*$ is cohomologically bounded provided $\lambda$ is regular. 
\end{lemma}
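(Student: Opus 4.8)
The plan is to deduce the lemma from the abelian Beilinson--Bernstein localization theorem together with the formalism of intertwining (twisting) functors, being careful \emph{not} to invoke finite global dimension of $U_\lambda$ --- that is precisely the consequence one wants to extract from this lemma, so circularity must be avoided. Recall that $\mathbb{L}\on{Loc}$ being cohomologically bounded is equivalent to the sheaf $\D_{G/B}(\lambda)$ of twisted differential operators having finite Tor-dimension as a right $U_\lambda$-module. I would begin by reducing to the case that $\lambda$ is \emph{dominant} regular: inside the $W$-orbit of $\lambda$ for the $\rho$-shifted dot action there is a unique dominant regular representative $\lambda^+$, say $\lambda = w\cdot\lambda^+$, and $U_\lambda = U_{\lambda^+}$ as algebras, so $\on{Loc}_\lambda$ and $\on{Loc}_{\lambda^+}$ share the same source category.

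Next I would use the intertwining functors. For a simple root $\alpha$ with associated $\mathbf{P}^1$-fibration $p_\alpha: G/B\to G/P_\alpha$, the partial Radon transform along $p_\alpha$ gives a functor $I_{s_\alpha}: D^b(\D_{G/B}(\mu))\to D^b(\D_{G/B}(s_\alpha\cdot\mu))$ of cohomological amplitude at most one, hence bounded; composing along a reduced expression for $w$ yields a bounded functor $I_w$. The key input is the functorial isomorphism $\mathbb{L}\on{Loc}_{w\cdot\mu}\cong I_w\circ\mathbb{L}\on{Loc}_\mu$ (under the identification $U_\mu = U_{w\cdot\mu}$), proved at the sheaf level; this is \cite{BMR}, Lemma 3.3.4. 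Applying it with $\mu = \lambda^+$ gives $\mathbb{L}\on{Loc}_\lambda\cong I_w\circ\mathbb{L}\on{Loc}_{\lambda^+}$, so it suffices to prove boundedness of $\mathbb{L}\on{Loc}_{\lambda^+}$.

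For $\lambda^+$ dominant regular, Beilinson--Bernstein localization asserts that $\Gamma: \D_{G/B}(\lambda^+)-\on{mod}\to U_{\lambda^+}-\on{mod}$ is an exact equivalence of abelian categories with quasi-inverse $\on{Loc}_{\lambda^+}$; hence $\on{Loc}_{\lambda^+}$ is exact, so $\mathbb{L}\on{Loc}_{\lambda^+} = \on{Loc}_{\lambda^+}$ is $t$-exact and a fortiori cohomologically bounded. Together with the reduction step this proves the lemma, with the cohomological amplitude of $\mathbb{L}\on{Loc}_\lambda$ bounded by $\ell(w)$.

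The main obstacle is the intertwining-functor formalism itself: constructing the $\mathbf{P}^1$-Radon transform, establishing its boundedness, and --- most delicately --- proving the compatibility $\mathbb{L}\on{Loc}_{s_\alpha\cdot\mu}\cong I_{s_\alpha}\circ\mathbb{L}\on{Loc}_\mu$ across the two twistings $\mu$ and $s_\alpha\cdot\mu$. One should also note that regularity is used essentially --- both to produce a dominant regular representative in the dot-orbit and to invoke the \emph{equivalence} in Beilinson--Bernstein, not merely exactness of $\Gamma$ --- and that the statement genuinely fails for singular $\lambda$. Since none of this is particular to the microlocal context of the present paper, we simply cite \cite{BMR}, Lemma 3.3.4 and Corollary 3.3.5.
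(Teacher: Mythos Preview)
The paper does not give its own proof of this lemma; it simply cites \cite{BMR}, Lemma 3.3.4 and Corollary 3.3.5, and moves on. So there is no ``paper's proof'' to compare against beyond the citation itself, and your closing sentence --- that you simply cite \cite{BMR} --- is already exactly what the paper does.

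Your sketch of the underlying argument is correct as a characteristic-zero proof: reduce to a dominant regular twist via intertwining (reflection) functors of bounded amplitude, then observe that $\on{Loc}_{\lambda^+}$ is exact. One point worth flagging, however, is that your base case invokes the full abelian Beilinson--Bernstein equivalence. That is logically fine and not circular, but it is stronger than necessary and sits uneasily with the paper's stated aim of providing an approach ``independent of the classical Beilinson--Bernstein approach.'' Indeed, once you have abelian localization for dominant $\lambda^+$, finite global dimension of $U_{\lambda^+}=U_\lambda$ follows immediately, and the paper's machinery becomes redundant for deducing it. The actual argument in \cite{BMR} is written in positive characteristic, where abelian Beilinson--Bernstein is unavailable; their proof of boundedness in the base case is a more direct geometric/Tor-dimension argument, not an appeal to an abelian equivalence. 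This is precisely why the paper singles out the \cite{BMR} proof as ``elegant and short'' and genuinely distinct from going through \cite{BB}. Your outline is therefore a valid proof, but not quite a faithful reconstruction of what \cite{BMR} does, and it partially undercuts the independence the paper is advertising.
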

\begin{corollary}[Derived Beilinson-Bernstein]
The categories $D(\D_{G/B}(\lambda))$ and $D(U_{\lambda})$ are equivalent if $\lambda$ is regular.
\end{corollary}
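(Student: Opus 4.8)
The plan is to deduce the corollary from Proposition \ref{BMR lemma}, applied to the adjoint pair
\[
\mathbb{L}\on{Loc}\colon D(U_\lambda)\leftrightarrows D(\D_{G/B}(\lambda))\colon \mathbb{R}\Gamma .
\]
Parts (1)--(5) of the Proposition established just above furnish conditions (C1), (C2), (C3), (C5), (C6); in particular part (1) of Proposition \ref{BMR lemma} then produces a right adjoint $\mathbb{L}\on{Loc}^{\,!}$ of $\mathbb{R}\Gamma$ that preserves colimits. The entire remaining content is condition (C4): one must show $\mathbb{L}\on{Loc}(U_\lambda)\cong \mathbb{L}\on{Loc}^{\,!}(U_\lambda)$. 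Since $\mathbb{L}\on{Loc}(U_\lambda)=\D_{G/B}(\lambda)$ by construction, and since by adjunction the identity $\mathbb{L}\on{Loc}^{\,!}(U_\lambda)=\D_{G/B}(\lambda)$ is equivalent (exactly as in Corollary \ref{computation of f^!}) to the Serre-duality statement that the canonical map
\[
\mathbb{R}\Hom_{\D_{G/B}(\lambda)}(M,\D_{G/B}(\lambda))\longrightarrow \mathbb{R}\Hom_{U_\lambda}(\mathbb{R}\Gamma M, U_\lambda)
\]
is an isomorphism for all $M$, the problem reduces to proving this flag-variety analog of Theorem \ref{serre duality} (on the compact generators $\D_{G/B}(\lambda)\otimes\chi^\ell$, which by the proof of Proposition in Section \ref{flag varieties} generate $D(\D_{G/B}(\lambda))$).

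To do that I would transport the argument of Section \ref{proof of serre} to the present setting. As noted in the preamble to Section \ref{flag varieties}, for a dominant regular $\chi$ the unstable locus of $H$ on $\pi^{-1}(\overline{G/U})\subset T^*\mathbb{V}$ is precisely $\pi^{-1}(\overline{G/U}\smallsetminus G/U)$, so that $\D_{G/B}(\lambda)$-modules play the role of $\E_{\resol}(c)$-modules, the semistable quotient is the smooth variety $T^*(G/B)$, and the affine quotient $X=\on{Spec}\,\C[T^*(G/B)]$ is the nilpotent cone, with $f\colon T^*(G/B)\to X$ the Springer resolution. This is a symplectic resolution, so Lemma \ref{sympresol} gives the analog of Assumptions \ref{assumptions}(2a),(2b),(3): $X$ and $T^*(G/B)$ are Gorenstein with trivial dualizing sheaves and $\mathbb{R}f_*\theo_{T^*(G/B)}=\theo_X$. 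Consequently the analog of Lemma \ref{Gorenstein} gives that $U_\lambda$ and $\cR(U_\lambda)$ are Auslander--Gorenstein with trivial rigid Auslander dualizing complex (using $\gr U_\lambda\cong \C[X]$), and the analog of Lemma \ref{direct image of D} gives $\mathbb{R}\Gamma\,\D_{G/B}(\lambda)=U_\lambda$. One then runs the proof of Theorem \ref{serre duality} essentially verbatim: cover $\overline{G/U}$ by the loci where the projections to the fundamental representations $V_i$ are nonzero, form the (now entirely classical) \v{C}ech complex computing $\mathbb{R}\Gamma$ on $G/B$, pass to Rees modules over $\cR(U_\lambda)$, replace complexes by finite free ones using Lemma \ref{replacement lemma} (legitimate because of the analog of Corollary \ref{finiteness of direct images}, $f$ being projective), apply classical Serre duality on $T^*(G/B)$ to see the cone vanishes modulo $t$, and finish with an inductive Nakayama argument keeping the $\C[t]$-flatness compatibilities intact. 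This is the step at which regularity of $\lambda$ enters: the argument goes through exactly as in Section \ref{proof of serre} only when $\mathbb{L}\on{Loc}$ is cohomologically bounded, which holds for regular $\lambda$ by \cite[Lemma 3.3.4, Corollary 3.3.5]{BMR}.

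Once (C4) is established, Proposition \ref{BMR lemma}(2) immediately yields that $\mathbb{L}\on{Loc}$ and $\mathbb{R}\Gamma$ are mutually quasi-inverse equivalences $D(U_\lambda)\simeq D(\D_{G/B}(\lambda))$, and since both functors carry compact objects to compact objects (Lemma \ref{cg lemma} together with parts (2),(3) of the Proposition of Section \ref{flag varieties}) they restrict to an equivalence of the bounded derived categories as well. I expect the main obstacle to be the bookkeeping in the flag-variety Serre-duality argument rather than any genuinely new phenomenon: specifically, verifying cleanly that the Gorenstein/rational-singularities package (the analogs of Lemmas \ref{Gorenstein} and \ref{direct image of D}) is available for the Springer resolution $T^*(G/B)\to X$, and confirming that the \v{C}ech/Rees-module reductions of Section \ref{proof of serre} remain compatible with $\C[t]$-base change in a setting where the ambient variety $\overline{G/U}$ itself is only singular.
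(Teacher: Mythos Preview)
Your proposal is correct and takes essentially the same approach as the paper: the Proposition supplies (C1)--(C3), (C5), (C6); (C4) is then established by adapting the Serre-duality argument of Section~\ref{proof of serre}, which goes through exactly when $\mathbb{L}\on{Loc}$ is cohomologically bounded; and the cited BMR lemma furnishes boundedness for regular $\lambda$. Your concern about the singularity of $\overline{G/U}$ is addressed by the paper's remark that in this setting the \v{C}ech complex is the ordinary one on $G/B$, so the reductions of Section~\ref{proof of serre} in fact simplify rather than become more delicate.
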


We remark that one---perhaps the only---advantage of this approach (besides the possible conceptual advantage that, unlike the ``classical'' proof, we do not use the left $B$-action on $G/B$ or $\overline{G/U}$, only, morally speaking, the $H$-action on 
$\overline{G/U}$) is that it seems to be relatively easy to prove that $\on{Loc}$ is a cohomologically bounded functor when $\lambda$ is a regular weight: cf. the elegant and short proof of \cite{BMR}.   By contrast, it seems to be no easier to prove that $U_\lambda$ has finite global dimension than to prove that localization holds.

\subsection{Wreath Product Spherical Rational Cherednik Algebras}\label{wreath}
Fix integers $\ell, n>1$ and let $\eta = e^{2\pi i/\ell}$.  

Let $\mu_\ell\cong \Gamma \subset SL(2)$ be the cyclic subgroup of $SL(2)$ generated by the matrix
$\sigma = \on{diag}(\eta, \eta\inv)$.  Let $\Gamma_n = S_n\wr \mu_\ell$ denote the semidirect product of $S_n$ and $\Gamma^n$; this is the {\em wreath product} of the cyclic group.  
For an element $\gamma \in \mu_\ell$ and an integer $1\leq i \leq b$, write $\gamma_i$ to mean 
$(1,\dots, 1, \gamma, 1, \dots, 1)\in \Gamma^n$, where $\gamma$ appears in the $i$th factor.  

Etingof-Ginzburg \cite{EG} associate to $\Gamma_n$ a {\em rational Cherednik algebra} defined as follows.
The elements
$s_{ij}\gamma_i\gamma_j\inv$, $1\leq i, j\leq n$, $\gamma\in \mu_\ell\smallsetminus\{1\}$ lie in a single conjugacy class $S$ of symplectic reflections in $S_n\wr \mu_\ell$.   Moreover, for $1\leq m\leq \ell-1$, 
the elements $(\sigma^m)_i$ are symplectic reflections lying in distinct conjugacy classes $C_\ell$ (the class does not depend on $i$).  It turns out
that this is a complete list of conjugacy classes of symplectic reflections in $S_n\wr\mu_\ell$.  Choosing 
$k\in\C$ and $c\in\C^{\ell-1}$, we get a function on the set of such conjugacy classes that assigns $k$ to 
$S$ and $c_m$ to $(\sigma^m)_i$.   Writing $V=(\C^2)^n$ with basis $x_i, y_i$, the Cherednik algebra
$\mathsf{H}_{k,c}$ is the quotient of $T^\bullet(V)\ast(S_n\wr\mu_\ell)$ by the relations:
\bd
[x_i, x_j] = 0, \hspace{2em} [y_i, y_j] = 0 \hspace{2em} \mbox{}  \text{for all $1\leq i, j\leq n$};
\ed
\begin{align*}
y_i x_i - x_iy_i = 1 + k\sum_{j\neq i} \sum_{\gamma\in\mu_\ell} s_{ij}\gamma_i\gamma_j\inv + 
\sum_{\gamma\in\mu_\ell \smallsetminus\{1\}} c_\gamma\gamma_i \hspace{2em} & \text{for all $1\leq i\leq n$};\\
y_ix_j - x_jy_i = -k\sum_{m=0}^{\ell-1}\eta^ms_{ij}(\sigma^m)_i(\sigma^m)_j\inv \hspace{2em} & \text{for $i\neq j$}.
\end{align*}
Writing $\ds e= \frac{1}{|S_n\wr\mu_\ell|} \sum_{w\in S_n\wr\mu_\ell} w$ for the symmetrizing idempotent, the {\em spherical subalgebra} is $\mathsf{U}_{k,c} \overset{\on{def}}{=} e\mathsf{H}_{k,c} e$.  

We recall, following \cite{O, Gordon}, the construction of the spherical subalgebra of the rational Cherednik
algebra via quantum Hamiltonian reduction.  Namely, let $Q$ be a cyclic quiver with $\ell$ vertices and 
cyclic orientation.  Choose an extending vertex denoted $0$.  Let $Q_\infty$ denote the quiver obtained by 
adding one vertex $\infty$ to $Q$ joined to $0$ by a single arrow.  Let $\overline{Q}_\infty$ denote the doubled quiver.  Let $\delta = (1,\dots, 1)$ be the affine dimension vector of $Q$ and let 
$\epsilon = e_\infty + n\delta$, a dimension vector for $Q_\infty$.  Let 
\bd
\on{Rep}(Q_\infty, \epsilon) = \left(\bigoplus_{r=0}^{\ell-1} \Hom(\C^n, \C^n)\right)\oplus \C^n
 = \{(X_0,\dots, X_{\ell-1}, i)\}.
 \ed
 Let $G = \prod_{r=0}^{\ell-1} GL(n)$ act on  $\on{Rep}(Q_\infty, \epsilon)$ by
 \bd
 (g_0, \dots, g_{\ell-1})\cdot (X_0,\dots, X_{\ell-1}, i)  = (g_0X_0g_1\inv, g_1X_1g_2\inv, \dots, g_{\ell-1}X_{\ell-1}g_0\inv, g_0i).
 \ed
 
 For each $(c_0,\dots, c_{\ell-1})\in\C^\ell$ and $k\in\C$, define a Lie algebra character $\chi_{k, c}: \mathfrak{g}\rightarrow \C$ as a sum $\chi_{k,c} = \chi_c+ \chi_k$ by:
 \begin{enumerate}
 \item
 $\chi_c(X_0,\dots, X_{\ell-1}) = \sum_{r=0}^{\ell-1}C_r\on{Tr}(X_r)$, where
 \bd
 C_r =\ell\inv\Big(1-\sum_{m=1}^{\ell-1} \eta^{mr} c_m\Big) \;\text{for $1\leq r\leq \ell-1$, and}\; 
 C_0 = \ell\inv\Big(1-\ell-\sum_{m=1}^{\ell-1}c_m\Big).
 \ed
 \item 
 $\chi_k(X_0,\dots, X_{\ell-1}) = k \on{Tr}(X_0)$.
 \end{enumerate}

Let $U_{k,c}$ denote the algebra associated by quantum Hamiltonian reduction to the $G$-action on 
$W = \on{Rep}(Q_\infty, \epsilon)$ with Lie algebra character $\chi_{k,c}$ (note that we are suppressing 
dependence on $n$).  Then: 
\begin{thm}[\cite{O, Gordon}]
$U_{-k,-c}$ is isomorphic to the spherical subalgebra $\mathsf{U}_{k,c}$
of the rational Cherednik algebra $\mathsf{H}_{k,c}$ associated to $\Gamma_n$ with parameters $k,c$. 
\end{thm} 
By Theorem 3.2.3 of \cite{GG}, the moment map for the $G$-action on $W = \on{Rep}(Q_\infty, \epsilon)$ 
is flat.  By Lemma \ref{sympresol}, Assumptions \ref{assumptions}(2a),(2b),(3) will hold whenever $\resol\rightarrow X$ is a symplectic resolution.  See \cite{Gordon2} for a discussion of the (nonempty) set of characters $\chi$ of $G$ for which $\mu\inv(0)/\!\!/_{\chi} G \cong \on{Hilb}^n(\wt{\C^2/\mu_\ell})$, the Hilbert scheme 
of $n$ points on the smooth surface $\wt{\C^2/\mu_\ell}$ obtained as the minimal resolution of the quotient
singularity $\C^2/\mu_\ell$.

Finally, we need to know for which values of $k,c$ the functor $\mathbb{L}\map^*$ is bounded. 

By Theorem 5.5 of \cite{E}, this fails if and only if $(k, C)$ is an {\em aspherical value} of the parameter.  The aspherical values are classified 
by \cite{DG}.  Changing notation to be consistent with what we have written above, one finds:
\begin{corollary}[Theorem~1.1 of \cite{DG} and Theorem \ref{main thm intro}]\label{cyclotomic global dimension}
The quantum Hamiltonian reduction algebra $U_{-k,-C}$ has infinite global dimension in the following cases.  
\begin{enumerate}
\item $k = \frac{u}{v}$ for integers $1\leq u<v\leq n$.
\item There exist a partition $\lambda\in P_n$, an integer
$0\leq a \leq \ell-1$, and an integer
$b\not\equiv 0 \mod \ell$ with $1\leq b \leq a + \ell(\on{ln}(\lambda) -1)$ for which 
\[
\frac{2b}{\ell} - ([\frac{b-a}{\ell}]+1) = \sum_{s=1}^b C_{a-b+s} -k(\lambda_1-\on{ln}(\lambda))
\]
\end{enumerate}
In all other cases, $U_{-k, -C}$ has finite global dimension, and hence derived microlocalization holds.
\end{corollary}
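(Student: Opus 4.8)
The plan is to combine the homological criterion of Etingof \cite{E} with the explicit description of aspherical parameters in \cite{DG}, and then to invoke Theorem \ref{main thm intro}. First I would check that the geometric hypotheses of that theorem are in force for $W=\on{Rep}(Q_\infty,\epsilon)$ with the character $\chi_{k,C}$: flatness of the moment map (Assumption \ref{assumptions}(1)) is Theorem~3.2.3 of \cite{GG}, and by \cite{Gordon2} one may choose the stability character $\chi$ so that $\resol=\mu\inv(0)/\!\!/_\chi G\cong \on{Hilb}^n(\wt{\C^2/\mu_\ell})$ is a symplectic resolution of the affine quotient $X$; Lemma \ref{sympresol} then supplies Assumptions \ref{assumptions}(2a),(2b),(3). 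Consequently Theorem \ref{main thm intro} (equivalently Corollary \ref{fin gl dim}) applies: once we know $U_{-k,-C}$ has finite global dimension, derived microlocalization $D(U_{-k,-C})\simeq D(\cE_{\resol}(-k,-C))$ follows.

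The next step is to locate the parameters at which the global dimension fails to be finite. By the theorem above, $U_{-k,-C}\cong\mathsf{U}_{k,C}=e\mathsf{H}_{k,C}e$, and by Theorem~5.5 of \cite{E} the spherical subalgebra $\mathsf{U}_{k,C}$ has finite global dimension if and only if $\mathsf{H}_{k,C}e\mathsf{H}_{k,C}=\mathsf{H}_{k,C}$, i.e.\ exactly when $(k,C)$ is not an aspherical value of $\mathsf{H}_{k,C}$ (and then $U_{-k,-C}$ is Morita equivalent to $\mathsf{H}_{k,C}$, with global dimension $2n$). Theorem~1.1 of \cite{DG} classifies the aspherical values, but in terms of the parameters $(c_0,d_0,\dots,d_{\ell-1})$ used there, so the remaining work is a translation of conventions. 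Using the dictionary \cite[(1.17)]{DG} between those parameters and the parameters $(H_j,h)$ of \cite{Gordon2}, namely $h=-c_0$ and $H_j=\ell\inv(d_{j-1}-d_j)$, together with $c_v=\sum_{j=0}^{\ell-1}\eta^{-vj}H_j$ and $k=h$ relating $(H_j,h)$ to the data entering $\chi_{k,C}$, one computes (using $\sum_j H_j=\sum_j d_j=0$) that $C_r=\ell\inv(1-\ell H_r)$ for $r\neq 0$ and $C_0=\ell\inv(1-\ell H_0-\ell)$, hence $H_r=\ell\inv(1-\ell C_r-\delta^\ell_r\,\ell)$ and $d_a-d_{a-b}=-\ell\sum_{s=1}^b H_{a-b+s}$. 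Substituting into the hyperplanes of \cite{DG} and using the elementary identity $\sum_{s=1}^b\delta^\ell_{a-b+s}=[\tfrac{b-a}{\ell}]+1$ (the count of multiples of $\ell$ in $\{-a,-a+1,\dots,b-1-a\}$) turns condition~(a) of \cite[Theorem~1.1]{DG} into statement~(1) and condition~(b) into the displayed hyperplane in statement~(2).

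Assembling these: if $(k,C)$ lies on none of the hyperplanes listed in (1) and (2), then $(k,C)$ is not aspherical, so $U_{-k,-C}$ has finite global dimension by \cite{E}, and derived microlocalization follows from Theorem \ref{main thm intro}; conversely the hyperplanes of (1) and (2) are precisely the aspherical locus, where the global dimension is infinite. I expect the only real obstacle to be bookkeeping: keeping three distinct parameter conventions (those of the present paper via $\chi_{k,C}$, of \cite{Gordon2}, and of \cite{DG}) straight and checking the fiddly but elementary counting identity for $\sum_{s=1}^b\delta^\ell_{a-b+s}$; no new homological or geometric input is needed beyond \cite{E}, \cite{DG}, and the main theorem, together with the already-established facts of \cite{GG} and \cite{Gordon2}.
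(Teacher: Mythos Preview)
Your proposal is correct and follows essentially the same approach as the paper: verify Assumptions \ref{assumptions} via \cite{GG} and Lemma \ref{sympresol}, invoke Etingof's criterion \cite[Theorem~5.5]{E} to reduce finite global dimension to asphericity, cite \cite[Theorem~1.1]{DG} for the classification of aspherical parameters, and translate the \cite{DG} parameters $(c_0,d_0,\dots,d_{\ell-1})$ through the $(H_j,h)$ of \cite{Gordon2} into the $(k,C)$ of the present paper using exactly the dictionary and the counting identity $\sum_{s=1}^b\delta^\ell_{a-b+s}=[\tfrac{b-a}{\ell}]+1$ you describe. The paper compresses all of this into the phrase ``changing notation to be consistent with what we have written above,'' but the underlying computation is the one you outline.
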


\subsection{Applications In Other Contexts}
As the structure of the proof of Theorem \ref{main equiv thm} suggests, the strategy used to establish derived equivalences in this paper is applicable in a number of other contexts.

\subsubsection{Non-Affine Quantizations}\label{good quotient}
Theorem \ref{main thm intro} generalizes to non-affine situations in which a good quotient (in the GIT sense) exists.  More precisely,
suppose $\bigvar$ is a smooth symplectic $G$-variety with moment map $\mu:\bigvar\rightarrow\mathfrak{g}^*$ and that $\mu\inv(0)$ comes with a good quotient map $q: \mu\inv(0)\rightarrow X$: in other words, an affine $G$-invariant morphism so that $q_*^G\theo_{\mu\inv(0)} = \theo_X$.  Suppose that $\bigvar$ comes equipped with a filtered quantization $\alg$, i.e. a sheaf of filtered rings whose associated graded is $\theo_{\bigvar}$, 
satisfying the conditions of Section \ref{quantum reduction}.  Then $U_c$ is naturally defined as a sheaf of rings on $X$, and the microlocal category
$D(\cE_{\resol}(c))$ quantizes the GIT quotient $\resol = \mu\inv(0)^{\chi\on{-ss}}/\!\!/G$.   An analog of Theorem 
\ref{main thm intro} follows in this setting by standard sheaf properties of DG categories, which in particular yields the following 
implication.
\begin{thm}\label{second thm intro}
Suppose that $q: \mu\inv(0)\rightarrow X$ is a good quotient (in the GIT sense).  
Suppose that $\mu$, $X$, $\resol$, and $f: \resol\rightarrow X$ satisfy Assumptions
\ref{assumptions}. If the sheaf of rings $U_c$ is locally of finite global dimension and $\resol$ is smooth, then 
the functor
\bd 
\mathbb{L}\map^*: D(U_c)\rightarrow D(\cE_{\resol}(c))
\ed
defines an exact equivalence of bounded derived categories.
\end{thm}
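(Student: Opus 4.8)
The plan is to bootstrap from the affine case, Theorem~\ref{main equiv thm}, by localizing on $X$ and invoking descent for sheaves of DG categories. The key point is that all of the structures in the statement are local over $X$: the sheaf of algebras $U_c$, the microlocal category $\cE_{\resol}(c)$ (which localizes over $\resol$, hence over $X$ through the projective morphism $f$), and the adjoint functors $\mathbb{L}\map^*$, $\mathbb{R}\map_*$, $\map^!$. Concretely, since $q\colon\mu\inv(0)\to X$ is affine, for any affine open $V=\spec R\subseteq X$ the restriction of the data $(\alg,\mu,\mu_c,\chi)$ over $V$ reproduces, over the affine scheme $q\inv(V)$, the situation studied in Sections~\ref{section 4}--\ref{proof of serre}; and all of the results proved there --- compact generation, indecomposability, the \v{C}ech and Rees computations, Serre duality (Theorem~\ref{serre duality}), and the construction and continuity of $\map^!$ --- go through over $V$ (splitting $f\inv(V)$ into connected components wherever connectedness of $\resol$ was used, as in the proof of indecomposability). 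In particular $U_c$, $\cE_{\resol}(c)$ and $\mathbb{L}\map^*$ are well-defined globally and restrict correctly over affine opens.

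The second step is to establish the equivalence locally. Fix an affine open $V\subseteq X$. Assumptions~\ref{assumptions} restrict to $V$, and the hypothesis that the sheaf $U_c$ is locally of finite global dimension says precisely that the algebra $U_c(V)=\Gamma(V,U_c)$ has finite global dimension; smoothness of $\resol$ enters here as in Section~\ref{microlocalization}, ensuring that $\cE_{\resol}(c)|_{f\inv(V)}$ has its geometric interpretation and is therefore local over $f\inv(V)$. By Corollary~\ref{fin gl dim} --- equivalently, the implication $(1)\Rightarrow(3)$ of Corollary~\ref{first cor} --- the functor
\bd
\mathbb{L}\map^*|_V\colon D\big(U_c(V)\big)\longrightarrow D\big(\cE_{\resol}(c)|_{f\inv(V)}\big)
\ed
is an exact equivalence, and it restricts to an equivalence of bounded derived categories: boundedness is preserved in both directions because, by Lemma~\ref{compact = perfect} and finiteness of global dimension, the bounded derived category on each side is exactly the subcategory of perfect objects, which $\mathbb{L}\map^*$ and $\mathbb{R}\map_*$ interchange. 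The same holds over every intersection $V\cap V'$ and, more generally, over every term of the \v{C}ech nerve of an affine cover of $X$.

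The final step is to glue. In their cocomplete, compactly generated enhancements, both $V\mapsto D\big(\cE_{\resol}(c)|_{f\inv(V)}\big)$ and $V\mapsto D\big(U_c(V)\big)$ form sheaves of DG categories on $X$ for the Zariski topology: for the $U_c$-side this is standard Zariski descent for quasi-coherent modules over a sheaf of algebras on a Noetherian scheme, and for the microlocal side it follows from projectivity of $f$ together with the \v{C}ech-local descriptions of $\mathbb{R}\Gamma_c$ and $\mathbb{R}\map_*$ from Section~\ref{proof of serre}. The functor $\mathbb{L}\map^*$ is a morphism of these sheaves of DG categories which, by the previous step, is an equivalence over every chart; since a morphism of sheaves of DG categories that is a local equivalence is a global equivalence, $\mathbb{L}\map^*\colon D(U_c)\to D(\cE_{\resol}(c))$ is an equivalence, restricting to the bounded subcategories by the local statement. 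The step I expect to demand the most care --- and precisely what is packaged into ``standard sheaf properties of DG categories'' --- is verifying that these module-theoretic categories really do satisfy Zariski descent as sheaves of DG categories, and that $\mathbb{L}\map^*$, $\mathbb{R}\map_*$ are strictly compatible with restriction; once that is in place the remainder is formal.
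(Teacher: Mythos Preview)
Your proposal is correct and follows essentially the same approach as the paper: the paper itself gives no detailed proof, stating only that ``an analog of Theorem~\ref{main thm intro} follows in this setting by standard sheaf properties of DG categories,'' and your argument is precisely a careful elaboration of that sentence --- localize to affine opens of $X$, apply Corollary~\ref{fin gl dim} on each chart, and glue via Zariski descent for sheaves of DG categories.
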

Corollary \ref{Ch alg of curve} provides an application. 

\subsubsection{Deformation Quantization Modules}
\label{DQ}
Another natural context to study is that of  an arbitrary $\mathbb G_m$-equivariant symplectic resolutions $f\colon \resol \to X$, where $X$ is a normal affine cone. Here, mimicking \cite{KR}, it is natural to take an ample sequence of line bundles $\mathscr L_i$, and quantizations thereof. For an appropriate category of ``good'' modules over the canonical quantization $\cW_{\resol}$ of $\resol$ one should obtain a derived equivalence between $D^b(\cW_{\resol}$-mod) and $D^b(U_\hbar$-mod) where $U_{\hbar} =\on{End}_{\cW_{\resol}}(\cW_{\resol})$. We plan to return to this in future work.

\section{Appendix: \v{C}ech Complexes}\label{proof of serre}
In this section, we lay out the basics of the \v{C}ech complex for microlocalizations of $\alg$.  The story is essentially standard (cf. \cite{VW, VW2} for the basics), but we do not know a reference that
does everything we need in the form we need.  As elsewhere in the paper, we assume that $G$ is a connected reductive group.

\subsection{Modules and Microlocalization}
Let $S\subset \C[\bigvar]$ be a subset of nonzero homogeneous elements (recall that $\C[\bigvar]$ is graded via its identification with $\on{gr}\,\alg$).  We let $\overline{S}$ denote the smallest multiplicatively closed subset $\overline{S}$ of $\C[\bigvar]$ containing $S$, and let $\overline{S}_{\on{sat}}$ denote any multiplicatively closed subset of $\alg$ whose collection of principal symbols is exactly $\overline{S}$ (later we will make a specific such choice).    
We also let $\wt{S}$ denote the subset consisting of elements of 
$\overline{S}_{\on{sat}}$ identified with 
homogeneous elements of the Rees algebra $\cR = \cR(\alg)$.

Let $\wt{Q}^\mu_{\wt{S}}(\alg)$ denote the microlocalization of $\cR(\alg)$ at $\wt{S}$ as defined in \cite{AVV}, and let 
$Q^\mu_S(\alg)$ denote the microlocalization of $\alg$.  More precisely, $\wt{Q}^\mu_{\wt{S}}(\alg)$ is a graded algebra over $\C[t]$ for which $\wt{Q}^\mu_{\wt{S}}(\alg)/(t-1)  = Q^\mu_S(\alg)$.
  We next  record a few basic properties of these algebras from \cite{AVV}:
\begin{lemma}\label{local basics}
\mbox{}
\begin{enumerate}
\item For every $a\neq 0$, $Q^\mu_S(\alg) \cong \wt{Q}^\mu_{\wt{S}}(\alg)/(t-a)$.  
\item The algebra $Q^\mu_S(\alg)$ comes
equipped with a homomorphism $\alg\rightarrow Q^\mu_S(\alg)$ that makes $Q^\mu_S(\alg)$ flat over $\alg$ on both sides.  Similarly, there is a natural 
graded homomorphism $\cR\rightarrow \wt{Q}^\mu_{\wt{S}}(\alg)$ making $\wt{Q}^\mu_{\wt{S}}(\alg)$ flat over $\cR$ on both sides. 
\item All elements $s\in \alg$ whose
symbol lies in $\overline{S}$ become invertible in $Q^\mu_S(\alg)$.  
\item  $Q^\mu_S(\alg)$ comes equipped with a 
filtration $F_\bullet$ for which $F_k(Q^\mu_S(\alg)) \cap \alg = \alg^k$.  Moreover, $\on{gr}_F(Q^\mu_S(\alg)) \cong \overline{S}^{-1}\C[\bigvar]\cong \wt{Q}^\mu_{\wt{S}}(\alg)/(t)$.
\item If $M$ is an $\alg$-module equipped with a good filtration, then $Q^\mu_S(\alg)\otimes_{\alg} M$ is naturally a filtered $Q^\mu_S(\alg)$-module with associated graded isomorphic to $\overline{S}\inv \on{gr}(M)$. 
\item If $M$ is a finitely generated graded $\cR(\alg)$-module, then $\wt{Q}^\mu_{\wt{S}}(M): = \wt{Q}^\mu_{\wt{S}}(\alg)\otimes_{\cR(\alg)} M$ is naturally a graded $\wt{Q}^\mu_{\wt{S}}(\alg)$-module with
$\wt{Q}^\mu_{\wt{S}}(M)/t\wt{Q}^\mu_{\wt{S}}(M) \cong \overline{S}\inv(M/tM)$.
\item If $M$ is a filtered $\alg$-module with Rees module $\wt{M}$, then for $a\neq 0$, $\wt{Q}^\mu_{\wt{S}}(\wt{M})/(t-a)\cong Q_S^\mu(\alg)\otimes_{\alg} M$. 
\end{enumerate}
\end{lemma}

Define $\on{Ind}_\alg^{Q_S^\mu(\alg)}(M)  = Q_S(\alg)\otimes_{\alg} M$.
One has an adjoint pair of functors:
\bd
\on{Ind}_\alg^{Q_S(\alg)} = Q_S(\alg)\otimes_{\alg} - : \alg-\on{mod} \leftrightarrows Q_S(\alg)-\on{mod}: \on{Res}_{\alg}^{Q_S(\alg)},
\ed
where $\on{Res}_{\alg}^{Q_S(\alg)}$ is the usual restriction functor.   One gets a similar adjoint pair for graded $\cR$-modules.

\begin{lemma}\label{kernel of microlocal induction}
We have
\bd
\on{Ker}\big(\on{Ind}_\alg^{Q_S(\alg)}\big) = \Big\{ M \; \Big|\; SS(M)  \subset \bigcup_{s\in S} V(s)\Big\},
\;\;
\on{Ker}\big(\on{Ind}_\cR^{\wt{S}^{-1}(\cR)}\big) = \Big\{ M \; \Big|\; SS(M)  \subset \bigcup_{s\in S} V(s)\Big\}.
\ed
\end{lemma}
\begin{proof}
The equalities of the lemma are immediate from Lemma \ref{local basics}(5) for finitely generated modules, and follow for all modules by writing arbitrary modules as colimits of finitely generated ones.
\end{proof}

\subsection{Equivariant Microlocal Modules}\label{Equivariant Microlocal Modules}
Suppose that the connected reductive complex group $G$ acts on $\bigvar$, that $\chi: G\rightarrow \Gm$ is a choice of character, and that $S$ is a set of homogeneous (for the grading on $\C[\bigvar]$ induced from $\on{gr}\,\alg$) $\chi^{\mathbb{N}}$-semi-invariants (i.e. consists of elements $s$ each of which is $\chi^{\ell(s)}$-semi-invariant for some $\ell(s)>0$).  Then $\overline{S}$ consists of $\chi^{\mathbb{N}}$-semi-invariants, together with $1$, and we  choose $\overline{S}_{\on{sat}}$ to consist of $1$ together with all $\chi^{\mathbb{N}}$-semi-invariants in $\alg$ whose principal symbols lie in $\overline{S}$; this is a multiplicatively closed subset of $\alg$.  
Note that then the $G$-action preserves the set of elements of $S$, $\overline{S}$, and $\overline{S}_{\on{sat}}$ up to scalars; as a result, $G$ acts naturally on
$Q_S^\mu(\alg)$ and $\wt{Q}^\mu_{\wt{S}}(\alg)$.

Recall that a vector $v$ in a $G$-representation $V$ is {\em $G$-rational} (or simply {\em rational} if the group is understood) if there is a finite-dimensional rational $G$-subrepresentation $W\subseteq V$ with $v\in W$.  Given a $G$-representation $V$, we let $V^{\on{rat}}$ denote the subspace of $G$-rational vectors: it is a $G$-subrepresentation of $V$. For the following Lemma it is convenient to note that a vector $v \in V$ is $G$-rational precisely if it lies in the image of a $G$-homomorphism $\rho\colon U \to V$ from a finite dimensional rational $G$-representation $U$.

\begin{lemma}\label{rational vectors of a free module}
If $V$ is a rational $G$-representation and $M$ is any $G$-representation, then $(M\otimes_{\C} V\big)^{\on{rat}} = M^{\on{rat}}\otimes_{\C} V$.
\end{lemma}
\begin{proof}
Since tensor products commute with colimits, we may assume that $V$ is finite-dimensional. If $u \in M\otimes V$ is a $G$-rational vector, pick a $G$-equivariant map $\rho\colon W \to M\otimes V$ from a finite-dimensional rational $G$-representation $W$ whose image contains $u$. We have a canonical morphism:
\bd
\Theta\colon \text{Hom}(W,M\otimes V) \to \text{Hom}(W\otimes V^*,M),
\ed
where if $f\colon W \to M\otimes V$, the linear map $\Theta(f)$ on $W\otimes V^*$ is given by the bilinear map $(w,\phi) \mapsto (1\otimes \phi)(f(w))$, ($w \in W, \phi \in V^*)$.  The map $\Theta$ is evidently $G$-equivariant and hence the map $\Theta(\rho)$ is also. Thus to see that $u$ lies in $M^\text{rat}\otimes V$ it suffices to note that $u$ lies in $\Theta(\rho)(W\otimes V^*)\otimes V$, which is clear for example by writing the map $\rho$ in terms of a basis for $V$ and $W$. But it is clear that $M^{\on{rat}}\otimes V \subseteq (M\otimes V)^{\on{rat}}$ and hence the Lemma follows.
\end{proof}

We write $\left(\wt{Q}^\mu_{\wt{S}}(\alg), G\right)-\on{Mod}$ for the category of weakly $G$-equivariant $\wt{Q}^\mu_{\wt{S}}(\alg)$-modules.

Write $Z(S)\subset \bigvar$ for the hypersurface defined as the union of hypersurfaces $V(s_i)$ for $s_i\in S$.  Note that $Z(S)$ is the complementary closed subset to the open immersion $\on{Spec}(\overline{S}\inv\C[\bigvar])\hookrightarrow \on{Spec}(\C[\bigvar])$.  By construction, $Z(S)$ is $G$-stable.  Given a Lie algebra character $c:\mathfrak{g}\rightarrow \C$, let $(\alg, G, c)-\on{mod}_{Z(S)}$ denote the localizing subcategory of  $(\alg, G, c)-\on{mod}$ whose objects are modules $M$ with $SS(M)\subseteq Z(S)$ (cf. Section \ref{microlocalization}).
\begin{prop}\label{right adj of pi_S}
\mbox{}
\begin{enumerate}
\item An object $M$ of $(\alg, G, c)-\on{mod}$ lies in $(\alg, G, c)-\on{mod}_{Z(S)}$ if and only if 
$\on{Ind}_\alg^{Q_S^\mu(\alg)}(M) = 0$.
\item The functor 
$\on{Ind}_\alg^{Q_S^\mu(\alg)}: (\alg, G, c)-\on{mod} \longrightarrow \big(\wt{Q}^\mu_{\wt{S}}(\alg), G\big)-\on{Mod}$
factors uniquely through the quotient category 
$(\alg, G, c)-\on{mod}/(\alg, G, c)-\on{mod}_{Z(S)}$, and the induced functor 
\bd
(\alg, G, c)-\on{mod}/(\alg, G, c)-\on{mod}_{Z(S)}\longrightarrow \big(\wt{Q}^\mu_{\wt{S}}(\alg), G\big)-\on{Mod}
\ed
 is faithful.
 \item The quotient functor 
$ \pi_S: (\alg, G, c)-\on{mod}\longrightarrow (\alg, G, c)-\on{mod}/(\alg, G, c)-\on{mod}_{Z(S)}$
 has right adjoint $\Gamma_S$ defined by $\Gamma_S\big(\pi_S(M)\big) = (Q_S^\mu(\alg)\otimes_{\alg} M)^{\on{rat}}.$  Moreover, the right 
 adjoint $\Gamma_S$ is exact. 
\end{enumerate}
Moreover, analogous statements hold for $(G,c)$-equivariant graded $\cR(\alg)$-modules.
\end{prop}
\begin{proof}
(1) follows from Lemma \ref{kernel of microlocal induction}.  (2) is immediate from the universal property of the quotient category.  

To prove (3), we begin by recalling that the right adjoint $\Gamma_S$ applied to $\pi_S(M)$ is constructed as the colimit over maps $M\rightarrow M'$ whose kernel and cokernel have singular support in $Z(S)$.  If $\tau(M)$ is the maximal submodule with singular support in $Z(S)$, then $\Gamma_S(\pi_S(M))= \Gamma_S(\pi_S(M/\tau(M)))$; in particular, we may assume that $\tau(M) = 0$.  In this case, $\Gamma_S(\pi_S(M))$ is determined by being maximal among all extensions of $M$ whose cokernel has singular support in $Z(S)$ and that contain no nonzero submodule with singular support in $Z(S)$.  Consider the natural injective map $\iota: M\rightarrow (Q_S^\mu(\alg)\otimes_{\alg} M)^{\on{rat}}$.  If $M$ is finitely generated over $\alg$, then 
Lemma \ref{local basics}(5) implies that $SS(\on{coker}(\iota))\subseteq Z(S)$; since the target of $\iota$ commutes with colimits in $M$, the same inclusion of singular support holds for arbitrary $M$.  Now suppose $f: M\rightarrow M'$ is any map in $(\alg, G,c)-\on{mod}$ whose cokernel has singular support in $Z(S)$.  Then the natural map $Q_S^\mu(\alg)\otimes_{\alg} M\rightarrow Q_S^\mu(\alg)\otimes_{\alg} M'$ is an isomorphism, so the same is true after passing to rational vectors.  We thus obtain a map 
$g: M'\rightarrow (Q_S^\mu(\alg)\otimes_{\alg} M)^{\on{rat}}$ so that $g\circ f = \iota$.  Passing to the colimit over such $M'$, we immediately obtain a map
$\Gamma_S(\pi_S(M))\rightarrow (Q_S^\mu(\alg)\otimes_{\alg} M)^{\on{rat}}$.  Since neither domain nor target has a submodule with singular support in $Z(S)$, the universal property of $\Gamma_S$ implies that this map is an isomorphism.  

To prove the exactness claim of (3), observe that $\on{Ind}_\alg^{Q_S^\mu(\alg)}$ is exact.  Hence it suffices to prove that $(-)^{\on{rat}}$ is exact on the image of $\on{Ind}_\alg^{Q_S^\mu(\alg)}$.  Left exactness is clear.  To check right exactness, we may assume that $M$ and $N$ are finitely generated $\alg$-modules and that $M\rightarrow N$ is surjective; moreover, it suffices to check for a surjective map $\cR(M)\rightarrow \cR(N)$ corresponding to good filtrations of $M$ and $N$.  Now, by the construction of \cite{AVV}, the map 
$\wt{Q}_{\wt{S}}^\mu(\alg)\otimes \cR(M)\rightarrow \wt{Q}_{\wt{S}}^\mu(\alg)\otimes \cR(N)$ is the limit of localizations of the maps $\cR(M)/t^k\cR(M)\longrightarrow \cR(N)/t^k\cR(N)$, which are surjective maps of rational $G$-modules by construction; for simplicity, write $M_k\rightarrow N_k$ for the maps of localized modules.  A vector $v \in \wt{Q}_{\wt{S}}^\mu(\alg)\otimes\cR(N)$ is a rational vector if and only if there are a finite-dimensional rational $G$-module $V$ and vector $\wt{v}\in V$ and a compatible sequence of $G$-maps $V\rightarrow N_k$ for all $k$ such that the image of $\wt{v}$ corresponds to $v$ in the inverse limit.  Since $G$ is reductive and the maps $M_{k+1}\rightarrow M_k$ and $M_k \rightarrow N_k$ are all surjective maps of rational $G$-modules, given such a sequence we can find a compatible sequence of $G$-equivariant lifts $V\rightarrow M_k$ whose composites to $N_k$ agree with the given maps, and such that the images of $\wt{v}$ in the $M_k$ form a compatible sequence of vectors.  Hence, in the limit, we get a lift of $v$ to a rational vector in $\ds\lim_{\longleftarrow} M_k = \wt{Q}_{\wt{S}}^\mu(\alg)\otimes\cR(M)$ (here we use that $\cR(M)$ is finitely generated, otherwise the microlocalization may not equal the tensor product).  
\end{proof}

\begin{corollary}
\label{j_S^*}
Suppose that the set $S$ as above satisfies $\ds \bigvar^{uns} \subseteq \bigcup_{s\in S} V(s)$ (where instability is taken with respect to $\chi$).  Then 
the functors $\pi_S$ factor canonically
\bd
\xymatrix{(\alg,G,c)-\on{mod} \ar[d]^{\pi_c} \ar[dr]^{\pi_S} \\
\E_{\resol}(c)-\on{mod} \ar[r]^{\hspace{-1em}j_S^*} & \ds\frac{(\alg, G, c)-\on{mod}}{(\alg, G, c)-\on{mod}_{Z(S)}} 
} \hspace{2em}
\xymatrix{(\cR,G,c)-\on{mod} \ar[d]^{\pi_c} \ar[dr]^{\pi_S} \\
\ds\frac{(\cR,G,c)-\on{mod}}{(\cR,G,c)-\on{mod}^{\on{uns}}} \ar[r]^{\hspace{-1em}j_S^*} & \ds\frac{(\cR(\alg), G, c)-\on{mod}}{(\cR(\alg), G, c)-\on{mod}_{Z(S)}} 
}
\ed
through exact functors $j_S^*$.
\end{corollary}
Note that here (and elsewhere in this section) $\cR-\on{mod}$ denotes the category of graded $\cR$-modules (and similarly for the equivariant categories). 
\subsection{Adjoints}
\begin{prop}\label{microlocal adjoint}
The functors $j_S^*$ of Corollary \ref{j_S^*} have right adjoints.
More precisely,
\begin{enumerate}
\item the adjoints $j_{S\ast}$ are given by
$j_{S\ast}M = \pi_c\circ\Gamma_S(M).$
\item Each $j_{S\ast}$ is exact.
\end{enumerate}
\end{prop}
\begin{proof}
Note that if an adjoint exists, then $\Gamma_c\circ j_{S\ast} = \Gamma_S$ by uniqueness of adjoints, and so
$j_{S\ast} = \pi_c\circ\Gamma_c\circ j_{S\ast} = \pi_c\circ\Gamma_S$; and similarly for $\cR$.  

We give the proof only for $\alg$; the proof for $\cR$ is similar.  We will show that the given formula defines a right adjoint.  

We begin by proving:
\begin{lemma}\label{no change in push-pull}
If $\wt{N} = \Gamma_S\pi_S(N)$, where $N$ is a $(G,c)$-equivariant $\alg$-module, then 
$\Gamma_c\circ\pi_c(\wt{N}) = \wt{N}$.  Similar statements hold for weakly equivariant and non-equivariant modules.
\end{lemma}
\begin{proof}
Since $\pi_S$ is a localization functor, we have $\pi_S\circ\Gamma_S = \on{Id}$.  It follows that $\Gamma_S\circ\pi_S(\wt{N}) = \wt{N}$.
Now suppose that $f: \wt{N} \rightarrow N'$ is any map whose kernel and cokernel have unstable singular support.  Then 
$\pi_S(\wt{N})\rightarrow \pi_S(N')$ becomes an isomorphism, and so adjunction defines a map $g: N'\rightarrow \Gamma_S\circ\pi_S(\wt{N}) = \wt{N}$ so that $g\circ f$ is the identity.  Now $\Gamma_c\circ\pi_c(\wt{N})$ is the colimit over such maps $f$, and since split maps do not contribute to the colimit, the conclusion follows.
\end{proof}

Returning to the proof of the proposition, we thus have 
\begin{align*}
\Hom_{\E_{\resol}(c)}\big(\pi_c M, \pi_c\Gamma_S(\pi_S(N))\big) & = \Hom_{(\alg, G, c)}\big(M, \Gamma_c\pi_c\Gamma_S\pi_S(N)\big)\\
& = \Hom_{(\alg,G,c)}\big(M, \Gamma_S\pi_S(N)\big) \;\; \text{by Lemma \ref{no change in push-pull}}\\
& = \Hom\big(\pi_S(M),\pi_S(N)\big)\\
& =  \Hom\big(j_S^* \pi_c(M),\pi_S(N)\big).
\end{align*}
Since $\pi_c$ and $\pi_S$ are essentially surjective, this proves (1).  The exactness statement (2) is immediate from (1) by Proposition \ref{right adj of pi_S}(3).  
This completes the proof of Proposition \ref{microlocal adjoint}.
\end{proof}

\begin{prop}\label{computing jS}
Let $M = j_{S\ast} \pi_S(N)$ where $N$ is a twisted equivariant $\alg$-module or $\cR$-module.  Then 
\bd
\mathbb{R}\Gamma_c(M) \cong (Q_S^\mu(\alg)\otimes_{\alg} N)^{\on{rat}},
\hspace{2em} \text{respectively}\hspace{2em}
\mathbb{R}\Gamma_c(M) \cong (\wt{Q}_{\wt{S}}^\mu(\alg)\otimes_{\alg} N)^{\on{rat}}.
\ed
\end{prop}
\begin{proof}
Since $j_{S\ast}$ and $\pi_S$ are exact, we have 
\bd
(\mathbb{R}\Gamma_c)\circ (j_{S\ast}\pi_S) = \mathbb{R}(\Gamma_c\circ j_{S\ast}\circ \pi_S) = \mathbb{R}(\Gamma_c\pi_c\Gamma_S\pi_S).
\ed
But Lemma \ref{no change in push-pull} implies that $\Gamma_c\pi_c\Gamma_S\pi_S = \Gamma_S\pi_S$, and the latter is exact by 
Proposition \ref{right adj of pi_S}, so we get $(\mathbb{R}\Gamma_c)\circ (j_{S\ast}\pi_S) = \Gamma_S\pi_S$, as required.  
\end{proof}

\subsection{\v{C}ech Complex}
Choose a finite collection $\{f_i \; |\; 0\leq i\leq n\}$ of nonzero homogeneous $G$-semi-invariants, so that 
$\ds\bigcup_{i=0}^n D(f_i) = \bigvar^{ss}$.  For each nonempty subset $I\subseteq \{0,\dots,n\}$ we let $S_I = \{f_i\; | \; i\in I\}$.  For any weakly $G$-equivariant $\alg$-module $M$, we get
\bd
M_I \overset{\on{def}}{=} \Gamma_{S_I}\pi_{S_I}(M) = \big(Q_{S_I}^{\mu}(\alg)\otimes_\alg M\big)^{\on{rat}}.
\ed
Given $I\subseteq J\subseteq \{0,\dots,n\}$, we get a map $M_I\rightarrow M_J$ induced by the natural map $Q_{S_I}^\mu(\alg)\rightarrow Q_{S_J}^\mu(\alg)$.  We form the {\em \v{C}ech complex} $\check{C}^\bullet(M)$ with
\bd
\check{C}^k(M) = \prod_{|I| = k+1} M_I,
\ed
and differentials defined as in \cite[Section~III.4]{H}.  
If $M$ lies in 
$(\alg,G,c)-\on{mod}$, then $\check{C}^\bullet(M)$ is a complex in the same category.  We make analogous definitions for graded $\cR$-modules $M$ using $\wt{Q}_{\wt{S}_I}^\mu(\alg)$ in place of $Q^\mu_{S_I}(\alg)$ and let $\check{C}^\bullet(M)$ denote the corresponding \v{C}ech complex.

\begin{prop}\label{Cech of a quotient}
For a $G$-invariant left ideal $I\subseteq \alg$, $\check{C}^\bullet(\alg/I) \cong \check{C}^\bullet(\alg)\otimes_{\alg} (\alg/I)$.
\end{prop}
\begin{proof}
Picking generators of the ideal $I$ and using the fact that $\alg$ is a rational $G$-representation, we may find a finite-dimensional rational representation $V$ of $G$ and an equivariant exact sequence $\alg\otimes_{\C} V \rightarrow \alg \rightarrow \alg/I\rightarrow 0$.  Tensoring with $Q_S^\mu(\alg)$ over $\alg$ and applying $(-)^{\on{rat}}$ for any $S$  gives an exact sequence (Proposition \ref{right adj of pi_S})
$\big(Q_S^\mu(\alg)\otimes_{\C} V\big)^{\on{rat}} \rightarrow Q_S^\mu(\alg)^{\on{rat}} \rightarrow \big(Q_S^\mu(\alg)\otimes_{\alg} \alg/I\big)^{\on{rat}}\rightarrow 0$.  Now by Lemma \ref{rational vectors of a free module} the first term of the sequence is just $Q_S^\mu(\alg)^{\on{rat}}\otimes_{\C} V$, and hence its image in $Q_S^\mu(A)^{\on{rat}}$ is $Q_S^\mu(A)^{\on{rat}}I$. It follows that
\bd
(Q_S^\mu(\alg)\otimes_{\alg} \alg/I)^{\on{rat}} \cong Q_S^\mu(\alg)^{\on{rat}}/Q_S^\mu(A)^{\on{rat}}I \cong
Q_S^\mu(\alg)^{\on{rat}}\otimes_{\alg} (\alg/I).
\ed
 The result is then immediate from the definition of the \v Cech complex.
\end{proof}

\subsection{Rees Modules}\label{Rees modules}
Suppose now that $M$ is an $\alg$-module equipped with a good filtration.  We obtain the associated Rees module 
$\cR(M) = \oplus_k F_k(M)t^k \subseteq M[t, t^{-1}]$.  This is an $\cR = \cR(\alg)$-module; it is always torsion-free, hence flat, over $\C[t]$.  Each microlocalization $Q_S^\mu(\alg)\otimes_\alg M$
comes equipped with a filtration as well.  Moreover, we have the following:
\begin{prop}\label{Rees Cech}
Suppose that $M$ is an $\alg$-module with good filtration.  Then:
\mbox{}
\begin{enumerate}
\item $\check{C}^\bullet\big(\cR(M)\big)$ is a complex of torsion-free, hence flat, $\C[t]$-modules.
\item $\check{C}^\bullet\big(\cR(M)\big)/(t-a)\cong \check{C}^\bullet(M)$ as $\cR/(t-a)\cR \cong \alg$-modules for every $a\neq 0$.
\item $\check{C}^\bullet\big(\cR(M)\big)/(t)\cong \check{C}^\bullet(\on{gr}(M))$ as $\C[\bigvar]$-modules.
\end{enumerate}
\end{prop}
Here part (3) follows from Lemma \ref{local basics}(5). 
\begin{corollary}\label{gr commutes with H^0}
For any $(\alg, G)$-module $P$ with good filtration and any $i$, there is a natural $G$-equivariant isomorphism
 \bd
 \on{gr}\big(H^i(\check{C}^\bullet P)\big) \cong H^i((\check{C}^\bullet(\on{gr}\, P)\big)
\ed 
that is functorial in $P$.
\end{corollary}
\begin{proof}
This follows from the standard fact that cohomologies of complexes of flat modules commute with base change.
\end{proof}
We record the following two lemmas that will be used in Section \ref{SD proof}.
\begin{lemma}[Lemma III.12.3 of \cite{H}]\label{replacement lemma}
Let $R$ be a Noetherian ring and let $C^\bullet$ be a bounded-above complex of left $R$-modules such that for each $i$, $H^i(C^\bullet)$ is a 
finitely generated $R$-module.  Then there is a bounded-above complex $L^\bullet$ of finitely generated free $R$-modules and a quasi-isomorphism 
$g: L^\bullet \rightarrow C^\bullet$.  Furthermore, if $S\subseteq R$ is a central subring of $R$ and all $C^i$ are flat over $S$, then for any $S$-module $M$, the induced map $g\otimes 1_M: L^\bullet \otimes M\rightarrow C^\bullet\otimes M$ is a quasi-isomorphism.
\end{lemma}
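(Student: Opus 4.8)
The plan is to establish the first assertion by the classical descending induction that replaces a complex by a free resolution, and to deduce the second from the acyclicity of a tensored mapping cone.

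For the first assertion, fix $n$ with $C^i=0$ for $i>n$, which is possible since $C^\bullet$ is bounded above. I would construct by descending induction on $i\le n$ finitely generated free $R$-modules $L^i$, together with differentials $L^i\to L^{i+1}$ and maps $g^i\colon L^i\to C^i$, starting from $L^i=0$ for $i>n$, so that after each stage the partial complex $L^{\ge i}$ (set to zero in degrees $<i$) carries a chain map to $C^\bullet$ inducing an isomorphism on $H^j$ for $j>i$ and a surjection on $H^i$. Granting the data down to degree $i+1$, the failure of $H^{i+1}(L^{\ge i+1})=\ker(L^{i+1}\to L^{i+2})\to H^{i+1}(C^\bullet)$ to be injective is governed by a submodule $Z\subseteq L^{i+1}$, which is finitely generated because $R$ is Noetherian and $L^{i+1}$ is finitely generated; I pick a surjection from a finite free module $F_1\twoheadrightarrow Z$, use its composite into $L^{i+1}$ as the differential out of $F_1$, and define $g^i$ on $F_1$ by lifting a finite generating set of $Z$ through $d_C\colon C^i\to C^{i+1}$. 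To restore surjectivity on $H^i$ I choose finitely many cycles of $C^i$ whose classes generate $H^i(C^\bullet)$ — this is the only place the finite generation of $H^i(C^\bullet)$ over $R$ enters — and add a second finite free summand $F_2$ on that many generators, with zero differential and $g^i$ sending the generators to those cycles. Then $L^i:=F_1\oplus F_2$ is finite free and the enlarged $g$ is a chain map satisfying the improved cohomological conditions. The process need not terminate, but the resulting $L^\bullet$ is bounded above by $n$, and passing to the limit over $i$ shows that $g\colon L^\bullet\to C^\bullet$ is a quasi-isomorphism.

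For the second assertion, put $K^\bullet=\operatorname{Cone}(g)$, so $K^i=L^{i+1}\oplus C^i$ and $K^\bullet$ is a bounded-above acyclic complex; the claim that $g\otimes 1_M$ is a quasi-isomorphism is precisely that $K^\bullet\otimes_S M$ is acyclic. Each $C^i$ is flat over $S$ by hypothesis, and each $L^{i+1}$ is free over $R$, hence flat over $S$ (here one uses that $R$ is flat over $S$, which is part of the setting in which the lemma is applied — e.g.\ the Rees algebra $\cR(U_c)$ is torsion-free, hence flat, over $\C[t]$), so each $K^i$ is flat over $S$. A bounded-above acyclic complex of flat $S$-modules breaks into short exact sequences $0\to B^i\to K^i\to B^{i+1}\to 0$ with every $B^i$ again flat over $S$ — proved by descending induction, using that the kernel in a short exact sequence whose middle and right terms are flat is itself flat. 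Hence each of these sequences stays exact after applying $\otimes_S M$, and splicing them back together shows $K^\bullet\otimes_S M$ is acyclic, which is the claim.

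I expect the main obstacle to lie in the first part: one must choose carefully the invariant maintained at each stage of the induction and verify that Noetherianness of $R$ (which controls the cycle submodule $Z$) together with finite generation of the cohomology of $C^\bullet$ (which controls the ranks of the $L^i$) really lets the induction proceed and makes the limiting map manifestly a quasi-isomorphism. The second part is then formal, its only point of substance being the flatness over $S$ of the free $R$-modules $L^i$.
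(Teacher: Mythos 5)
Your argument is correct, and it is essentially the argument of the source the paper itself cites for this lemma (Hartshorne, III.12.3) rather than proves: the descending induction with the invariant ``isomorphism on $H^j$ for $j>i$, surjection on $H^i$,'' with one free summand killing the kernel of $\ker(L^{i+1}\to L^{i+2})\to H^{i+1}(C^\bullet)$ and one hitting generators of $H^i(C^\bullet)$, is the classical construction, and your reduction of the second assertion to acyclicity of $\operatorname{Cone}(g)\otimes_S M$ via splicing a bounded-above acyclic complex of $S$-flat modules into short exact sequences is the standard argument as well.

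The one substantive point is the flatness of $R$ over $S$ that you invoke to make the free $R$-modules $L^{i+1}$ flat over $S$. You are right to flag it: it is not among the stated hypotheses of the lemma, and it is not a removable convenience, since without it the second assertion can fail. For instance, take $R=\C[x,y]/(xy)$, $S=\C[x]\subseteq R$, let $C^\bullet$ be $C=R/yR\cong\C[x]$ concentrated in degree $0$ (finitely generated over $R$ and free, hence flat, over $S$), and let $M=S/(x)$. The $2$-periodic free resolution $\cdots\to R\xrightarrow{\,\cdot x\,} R\xrightarrow{\,\cdot y\,} R\to C$, tensored over $S$ with $M$, becomes $\cdots\to\C[y]\xrightarrow{\,\cdot y\,}\C[y]\xrightarrow{\,0\,}\C[y]\xrightarrow{\,\cdot y\,}\C[y]$ in degrees $\leq 0$, whose $H^{-2}\cong\C[y]/y\C[y]\cong\C$ is nonzero, while $C\otimes_S M\cong\C$ sits in degree $0$ only; and since any two bounded-above complexes of free $R$-modules quasi-isomorphic to $C$ are homotopy equivalent, no other choice of $(L^\bullet,g)$ repairs this. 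So the lemma really requires the additional hypothesis that $R$ (equivalently each $L^i$) is flat over $S$, exactly as your proof does. As you observe, this is harmless for every use made of the lemma in the paper, where $S=\C[t]$ and $R$ is a Rees algebra, which is $t$-torsion-free and hence flat over $\C[t]$.
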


\begin{lemma}\label{base change of Hom}
Let $\cR$ be a non-negatively graded, Noetherian, flat $\C[t]$-algebra (in particular, $\C[t]$ is central in $\cR$).  Suppose that $M$ is a graded $\cR$-module of finite type and $N^\bullet$ is a complex of $\C[t]$-flat graded $\cR$-modules.  Then for any $a\in \C$,
\bd
\C[t]/(t-a)\otimes_{\C[t]}\Hom_{\cR}(M, N^\bullet) \cong \Hom_{\cR/(t-a)\cR}(M/(t-a)M, N^\bullet/(t-a)N^\bullet).
\ed
\end{lemma}

\subsection{Calculating with the \v{C}ech Complex}
\begin{thm}\label{Cech calculates}
For every $M$ in 
$(\alg,G,c)-\on{mod}$, respectively $(\cR,G,c)-\on{mod}$, we have
\begin{enumerate}
\item $\pi_c M \simeq \pi_c \check{C}^\bullet(M)$, and
\item $\mathbb{R}\Gamma_c\circ\pi_c(M)\simeq \check{C}^\bullet(M)$.
\end{enumerate}
\end{thm}
\begin{proof}
It suffices to prove (1) for finitely generated modules $M$ since both sides commute with colimits.  If $M$ is an $\alg$-module, it can be equipped with an equivariant good filtration to give a finitely generated equivariant graded $\cR$-module $\wt{M}$ for which $\wt{M}/(t-1)\wt{M} = M$.  
Since the functor $\C[t](t-1)\otimes -$ descends to the quotient categories and 
$\C[t](t-1)\otimes \big(\check{C}^\bullet(\wt{M})\big) \simeq \check{C}^\bullet(M)$ (by Proposition \ref{Rees Cech}), it will suffice to show that for a finitely generated $\cR$-module $M$, the statement of (1) holds.
 
Let $\cR-\on{mod}^{uns}$ denote the full subcategory of objects with unstable singular support.  We have a commutative diagram:
\bd
\xymatrix{\ds\frac{(\cR,G,c)-\on{mod}}{(\cR,G,c)-\on{mod}^{\on{uns}}} \ar[r]^{\mbox{}\hspace{1em}\on{incl}} & \ds\frac{\cR-\on{mod}}{\cR-\on{mod}^{uns}}\\
(\cR,G,c)-\on{mod}\ar[u]_{\pi_c} \ar[r]^{\on{incl}} & \cR-\on{mod}\ar[u]_{\overline{\pi}}
}
\ed
with faithful rows.  To check that the canonical map $\pi_c M \rightarrow \pi_c \check{C}^\bullet(M)$ is a quasi-isomorphism, then, it is enough to check after applying the functor
$\on{incl}$, hence to check that $\overline{\pi} M\simeq \overline{\pi} \check{C}^\bullet(M)$.  Choose a finite, free graded $\cR$-module resolution $F^\bullet\rightarrow M$.  We get a map 
$\overline{\pi} F^\bullet \rightarrow \overline{\pi} \check{C}^\bullet(F^\bullet)$, and if this is a quasi-isomorphism (replacing the second double complex by its totalization) then the conclusion follows for $M$.  Since both $\overline{\pi}$ and $\check{C}^\bullet$ commute with colimits, the quasi-isomorphism for $F^\bullet$ reduces to the corresponding statement for $\cR$ itself.  

We have $\C[t]/(t)\otimes_{\C[t]}\check{C}^\bullet(\cR) \cong \check{C}^\bullet(\on{gr}(\alg))$ and thus the associated graded of $\cR\rightarrow \check{C}^\bullet(\cR)$ is the natural map 
$\C[\bigvar]\rightarrow \check{C}^\bullet(\C[\bigvar])$.  The target of this last map is a complex of $\C[\bigvar]$-modules that computes
$H^\bullet(\bigvar^{ss}, \theo)$.  Since $\bigvar$ is affine we have 
\bd
\on{supp}(H^i(\bigvar^{ss}, \theo))\subseteq \bigvar^{uns} \;\;\;\; \text{for $i>0$, implying}
\ed
\bd
SS\big(\on{Cone}[\cR\rightarrow \check{C}^\bullet (\cR)]\big) \subseteq \bigvar^{uns}.
\ed
It follows that $\overline{\pi}\cR\xrightarrow{\simeq} \overline{\pi}\check{C}^\bullet(\cR)$ is a quasi-isomorphism, proving (1). 

For part (2), we have:
$\mathbb{R}\Gamma_c(\pi_c(M)) \simeq \mathbb{R}\Gamma_c(\pi_c\check{C}^\bullet(M))\simeq \check{C}^\bullet(M)$,
where the first isomorphism comes from part (1) of the present theorem and the second isomorphism follows from 
Proposition \ref{computing jS} and
Proposition \ref{microlocal adjoint}(1).
\end{proof}

\bibliographystyle{alpha}

\end{document}